\documentclass[onefignum,onetabnum]{siamonline220329}
\usepackage{graphicx,appendix,comment}
\usepackage{booktabs}
\usepackage[english]{babel}
\usepackage[utf8]{inputenc}
\usepackage{amssymb}
\usepackage{multicol}
\usepackage[utf8]{inputenc}
\usepackage[english]{babel}
\usepackage{multirow}
\usepackage{url}
\usepackage{cleveref}
\usepackage{cancel} 
\usepackage{bbm}
\usepackage{xcolor}
\usepackage{algorithm}
\usepackage{algpseudocode}
\usepackage{bm}
\algrenewcommand\algorithmicrequire{\textbf{Input:}}
\algrenewcommand\algorithmicensure{\textbf{Output:}}
\usepackage[center]{subfigure}

\newcommand{\norm}[1]{\left\lVert#1\right\rVert}
\newcommand{\bfA}{{\bf A}}
\newcommand{\bfB}{{\bf B}}

\newcommand{\bfC}{{\bf C}}
\newcommand{\Cb}{{\bfC}_\bfb}

\newcommand{\bfD}{{\bf D}}
\newcommand{\bfE}{{\bf E}}

\newcommand{\bfI}{{\bf I}}

\newcommand{\bfK}{{\bf K}}
\newcommand{\bfL}{{\bf L}}
\newcommand{\bfM}{{\bf M}}
\newcommand{\tbfM}{\tilde{\bf M}}

\newcommand{\bfR}{{\bf R}}

\newcommand{\bfT}{{\bf T}}
\newcommand{\bfU}{{\bf U}}

\newcommand{\bfV}{{\bf V}}

\newcommand{\bfW}{{\bf W}}

\newcommand{\bfX}{{\bf X}}

\newcommand{\bfZ}{{\bf Z}}
\newcommand{\bfa}{{\bf a}}
\newcommand{\bfb}{{\bf b}}

\newcommand{\bfd}{{\bf d}}
\newcommand{\bfe}{{\bf e}}

\newcommand{\bfg}{{\bf g}}
\newcommand{\bfh}{{\bf h}}

\newcommand{\bfk}{{\bf k}}

\newcommand{\bfq}{{\bf q}}
\newcommand{\bfr}{{\bf r}}
\newcommand{\bfs}{{\bf s}}

\newcommand{\bfu}{{\bf u}}
\newcommand{\bfv}{{\bf v}}
\newcommand{\bfw}{{\bf w}}
\newcommand{\bfx}{{\bf x}}

\newcommand{\bfz}{{\bf z}}
\newcommand{\bfzero}{{\bf0}}

\newcommand{\bfzeta}{{\boldsymbol{\zeta}}}
\newcommand{\bfUpsilon}{{\boldsymbol{\Upsilon}}}
\DeclareMathOperator{\diag}{diag}
\newcommand{\tbfUpsilon}{\tilde{\boldsymbol{\Upsilon}}}

\newcommand{\bfPsi}{{\boldsymbol{\Psi}}}

\DeclareMathOperator*{\argmin}{arg\,min}
\newcommand*\samethanks[1][\value{footnote}]{\footnotemark[#1]}

\title{Parameter Selection by GCV and a $\chi^2$ test within Iterative Methods for $\ell_1$-regularized Inverse Problems
}

\author{Brian Sweeney\thanks{School of Mathematical and Statistical Sciences, Arizona State University, Tempe, AZ (\href{mailto:bfsweene@asu.edu}{bfsweene@asu.edu}, \href{mailto:renaut@asu.edu}{renaut@asu.edu},  and \href{mailto:malena.espanol@asu.edu}{malena.espanol@asu.edu} )} \and Rosemary Renaut\samethanks \and Malena I. Espa\~nol\samethanks} 
\headers{Parameter Estimation in SB and MM Methods}{Sweeney, Renaut and Espa\~nol}
\begin{document} 

\maketitle

\begin{abstract}
    $\ell_1$ regularization is used to preserve edges or enforce sparsity in a solution to an inverse problem. We investigate the Split Bregman and the Majorization-Minimization iterative methods that turn this non-smooth minimization problem into a sequence of steps that include solving an $\ell_2$-regularized minimization problem. We consider selecting the regularization parameter in the inner generalized Tikhonov regularization problems that occur at each iteration in these $\ell_1$ iterative methods. The generalized cross validation  and $\chi^2$ degrees of freedom  methods are extended to these inner problems. In particular, for the $\chi^2$ method this includes extending the $\chi^2$ result for problems in which the regularization operator has more rows than columns and showing how to use the $A-$weighted generalized inverse to estimate prior information at each inner iteration. Numerical experiments for image deblurring problems demonstrate that it is more effective to select the regularization parameter automatically within the iterative schemes than to keep it fixed for all iterations. Moreover, an appropriate regularization parameter can be estimated in the early iterations and used fixed to convergence.
\end{abstract}

\begin{keywords}   $\ell_1$ regularization, Split Bregman, Majorization-Minimization, GCV, $\chi^2$ degrees of freedom method
\end{keywords}

\begin{AMS} 65F22, 65F10, 68W40
\end{AMS}

\section{Introduction}
We are interested in solving discrete ill-posed inverse problems where we have an observation $\tilde{\bfb}$ from an unknown input $\bfx$, connected by the linear system $\tilde{\bfA} \bfx \approx \tilde{\bfb}$, where $\tilde{\bfA} \in \mathbb{R}^{m\times n}$, $\tilde{\bfb} \in \mathbb{R}^m$, and $\bfx \in \mathbb{R}^n$.  We assume that $\tilde{\bfA}$ is ill-conditioned and that $\tilde{\bfb}$ is contaminated by additive Gaussian noise: $\tilde{\bfb} = \tilde{\bfb}_{true} + \boldsymbol{\epsilon}_{\tilde{\bfb}}$, where $\boldsymbol{\epsilon}_{\tilde{\bfb}} \sim \mathcal{N}({\bfzero},\bfC_{\bfb})$ is a Gaussian noise vector.  The matrix $\bfC_{\bfb}$ is a symmetric positive definite (SPD) covariance matrix. Since $\tilde{\bfA}$ is ill-conditioned, solving the problem with direct inversion will lead to a noisy solution.  Therefore, we impose regularization to make the problem well-posed. One option is to apply Tikhonov regularization~\cite{tikhonov1963solution} and solve the minimization problem
\begin{align}\label{eq:GenTikO}
   \min_{\bfx}\left\{ \frac{1}{2}\|\tilde{\bfA} \bfx - \tilde{\bfb}\|_{\bfW_{\bfb}}^2 + \frac{\lambda^2}{2} \norm{\bfL \bfx}_2^2\right\},
\end{align}
where $\bfW_{\bfb} = \bfC_{\bfb}^{-1}$ and the weighted norm is defined as $\norm{\bfz}_{\bfW_\bfb}^2 = \bfz^\top \bfW_\bfb \bfz$ for any vector $\bfz$. 
Here, $\lambda$ is a regularization parameter that balances the data fidelity term and the regularization term. The regularization matrix $\bfL \in \mathbb{R}^{p\times n}$ is often selected as the discretization of a derivative operator \cite{hansen2010discrete}, and the regularization term then minimizes the corresponding derivative of $\bfx$. Other matrices, such as discrete wavelet \cite{fang2014wavelet} or framelet transforms \cite{cai2009linearized,wang2014ultrasound}, can be used to minimize the value of $\bfx$ in the corresponding subspaces. 

We can whiten the noise in the data  by multiplying out the weighted norm in \cref{eq:GenTikO}, giving the minimization problem 
\begin{align}\label{eq:GenTik}
    \min_{\bfx}\left\{ \frac{1}{2}\norm{\bfA \bfx - \bfb}_2^2 + \frac{\lambda^2}{2} \norm{\bfL \bfx}_2^2\right\},
\end{align}
where
\begin{align}
    \bfA = \bfW_\bfb^{1/2}\tilde{\bfA} \text{ and } \bfb = \bfW_\bfb^{1/2}\tilde{\bfb}. \label{eq:Reweighting}
\end{align}
Now, $\bfb = \bfb_{true} + \boldsymbol{\epsilon}_{\bfb}$, where $\boldsymbol{\epsilon}_{\bfb} \sim \mathcal{N}({\bfzero},\bfI_m)$.  For the rest of the paper, we use the weighted $\bfA$ and $\bfb$, meaning that we assume $\boldsymbol{\epsilon}_{\bfb} \sim \mathcal{N}({\bfzero},\bfI_m)$. 

One benefit of the $\ell_2$ regularization problem \cref{eq:GenTik} is that it has a closed-form solution, but it also makes solutions smooth.  If the true solution has edges or is sparse, smooth solutions may not be desirable. For these types of solutions, the 1-norm is typically used as it preserves edges and enforces sparsity.  This gives the $\ell_1$-regularized problem
\begin{align}
    \min_{\bfx} \left\{ \frac{1}{2}\norm{\bfA \bfx - \bfb}_2^2 + \mu\norm{\bfL \bfx}_1\right\}. \label{eq:TV1}
\end{align}
Here, $\mu$ is a regularization parameter and $\bfL$ is a regularization matrix as in \cref{eq:GenTik}.
One special case of $\ell_1$ regularization is total variation (TV) regularization, first introduced in \cite{rudin1992nonlinear}, where $\bfL$ is the discretization of the first derivative. Unlike \cref{eq:GenTik}, \cref{eq:TV1} does not have a closed-form solution.  {Matrices $\bfA$ and }$\bfL$ may not have full rank, but we assume the invertibility condition
\begin{align}
    \mathcal{N}(\bfA)\cap\mathcal{N}(\bfL) = \emptyset.
    \label{eq:invcond}
\end{align}

In both \cref{eq:GenTik} and \cref{eq:TV1}, the value of the regularization parameter has a large impact on the solution.  For \cref{eq:GenTik}, there are many methods for selecting $\lambda$, including the discrepancy principle (DP) 
\cite{hansen1998rank,morozov1966solution}, the unbiased predictive risk estimator (UPRE) \cite{mallows2000some}, generalized cross validation (GCV) \cite{golub1979generalized}, the L-curve criterion \cite{hansen1992analysis}, and the $\chi^2$ degrees of freedom (dof)  test \cite{mead2008newton}.
If training data sets are available, there are also learning approaches that can be used to select $\lambda$ \cite{byrne2023learning,chung2017learning}.
  Many of these methods make use of the closed-form solution, which makes applying them to \cref{eq:TV1} difficult. 
  As a result, there are fewer parameter selection methods for \cref{eq:TV1}. 
Some methods have been extended to selecting $\mu$ directly in \cref{eq:TV1}, including DP \cite{bonesky2008morozov,jin2012iterative} and the L-curve~\cite{hou2018selection, yao2011compressive}.  With the L-curve, the two terms in~\cref{eq:TV1} are plotted against each other on a log-scale, and $\mu$ is selected at the corner of the corresponding curve.  There are also methods in the statistics community for selecting $\mu$ based upon the degrees of freedom in the solution \cite{tibshirani2012degrees}, including the $\chi^2$ dof test for the TV problem~\cite{mead2020chi}. 
 Some methods have also been applied within iterative methods for \cref{eq:TV1}.  DP in its iterative form \cite{buccini2022comparison,buccini2019l}, GCV \cite{buccini2022comparison}, and the residual whiteness principle (RWP) \cite{buccini2022comparison,etna_vol53_pp329-351} have been applied to select parameters at each iteration within iterative methods. 

\noindent{{\bf Main contributions}}: We consider the Split Bregman (SB) and Majorization-Minimization (MM)   iterative methods for solving the $\ell_1$ regularization problem. 
Both iterative methods solve an $\ell_2$-$\ell_2$ minimization problem of the form
\begin{align}\label{eq:genTik_hk}
       \min_{\bfx} J(\bfx)= \min_{\bfx}\left\{ \frac{1}{2}\norm{\bfA \bfx - \bfb}_2^2 + \frac{\lambda^2}{2} \|\bfL \bfx - \bfh^{(k)}\|_2^2\right\},
\end{align}
at the $k^{\text{th}}$ iteration.
Based on this minimization problem, we consider  new methods for selecting  $\lambda$ at every iteration. In particular, we extend GCV and the $\chi^2$ dof test to this inner problem.   For GCV, we derive the GCV function for \cref{eq:genTik_hk}, which is different than it is for \cref{eq:GenTik} due to $\bfh^{(k)}$. To apply the $\chi^2$ dof test, we use the $\bfA$-weighted generalized inverse of $\bfL$ to replace $\bfh^{(k)}$ in \cref{eq:genTik_hk} so that we can apply the $\chi^2$ test with the regularization term $\|\bfL(\bfx-\bfx_0)\|_2$ for a suitably defined reference vector $\bfx_0$. We also extend the non-central $\chi^2$ test for this configuration, and provide a new result on the degrees of freedom for problems in which $\bfL$ has more rows than columns, as needed for $2$D cases. Through numerical examples, we show that these selection methods can be applied at each iteration to achieve results that are comparable to finding a fixed $\lambda$ that is optimal with respect to the minimization of the relative error in the solution.  We also demonstrate that GCV and the $\chi^2$ dof test can be used in the initial iterations to find a suitable regularization parameter.  The methods zoom in on a parameter that is then held fixed when the change in the parameter per iteration is less than a given tolerance.  This works well and is less expensive than estimating $\lambda$ by GCV or the $\chi^2$ estimator at every iteration.

The organization of this paper is as follows. In \cref{sec:iter-methods}, we review SB and MM iterative methods. In \cref{sec:reg}, we develop three  new methods to find the iterative dependent regularization parameter for the Tikhonov problem that arises in both the SB and MM algorithm, focusing on the GCV in \cref{subsec:GCV},  the $\chi^2$ dof test in \cref{subsec:chi2}, and the non-central $\chi^2$ dof test in \cref{subsubsec:noncentralchi}. In \cref{subsec:othermethods}, we present the DP and RWP, which are other established methods for selecting the parameter at each iteration.  These methods are  compared with numerical examples in \cref{sec:Examples}. Conclusions are presented in \cref{sec:Conclusion}.

\section{Iterative methods for \texorpdfstring{$\ell_1$}{l1} regularization}\label{sec:iter-methods}
In this section, we review the split Bregman (SB) and the Majorization-Minimization (MM) methods, which share the inner problem of the  form \cref{eq:genTik_hk}. Both methods are applied to the weighted $\bfA$ and $\bfb$ as defined in \cref{eq:Reweighting}.

\subsection{The Split Bregman method}\label{subsec:SB}
In the SB method, introduced by Goldstein and Osher \cite{GO}, the problem~\cref{eq:TV1} is rewritten as a constrained optimization problem
\begin{align}\label{eq:l11}
    \min_{\bfx}\left\{\frac{1}{2}\|\bfA \bfx - \bfb \|^2_2 + \mu \|\bfd \|_1\right\} \quad \text{ s.t. } \bfL \bfx = \bfd.
\end{align}
Problem \eqref{eq:l11} can then be converted to an unconstrained optimization problem
\begin{align} \label{eq:Breg1}
    \min_{\bfx,\bfd}\left\{\frac{1}{2}\|\bfA \bfx - \bfb \|^2_2 + \frac{\lambda^2}{2} \|\bfL \bfx - \bfd \|^2_2 + \mu \|\bfd \|_1\right\},
\end{align}
which can be solved by a series of minimizations and updates known as the SB iteration
\begin{align}\label{eq:Breg2}
    (\bfx^{(k+1)}, \bfd^{(k+1)}) &= \argmin_{\bfx,\bfd}\left\{\frac{1}{2}\|\bfA \bfx - \bfb \|^2_2 + \frac{\lambda^2}{2}\|\bfL \bfx - \bfd^{(k)}+ \bfg^{(k)} \|^2_2 + \mu \|\bfd \|_1\right\} \\
    \bfg^{(k+1)} &= \bfg^{(k)} + (\bfL \bfx^{(k+1)} - \bfd^{(k+1)}). \label{eq:Breg2a}
\end{align}
In Problem \eqref{eq:Breg2}  the vectors $\bfx$ and $\bfd$ can be found separately as
\begin{align}\label{eq:Breg3}
    \bfx^{(k+1)} &= \argmin_\bfx \left\{\frac{1}{2}\|\bfA \bfx-\bfb\|_2^2 +\frac{\lambda^2}{2} \|\bfL\bfx -(\bfd^{(k)}-\bfg^{(k)})\|_2^2\right\}  \\ 
\bfd^{(k+1)}&= \argmin_\bfd \left\{\mu \|\bfd \|_1+\frac{\lambda^2}{2} \|\bfd-(\bfL\bfx^{(k+1)}+\bfg^{(k)})\|_2^2 \right\}. \label{eq:Breg4} 
\end{align}
Here, and in the update \cref{eq:Breg2a}, $\bfg$ is the vector of Lagrange multipliers. 

Clearly, the solution of \cref{eq:Breg2} depends on parameters $\lambda$ and $\mu$ that are often chosen as problem-dependent known values.  In this investigation, we  hold the ratio $\tau=\mu^{(k)}/(\lambda^{(k)})^2$ fixed and explore methods to select $\lambda^{(k)}$ at each iteration.  This keeps the threshold constant across iterations whereas if we tune $\mu$ instead, the threshold would change at each iteration. With these parameters, $\mu^{(k)} = \tau(\lambda^{(k)})^2 $ and \cref{eq:Breg4} becomes
\begin{align}
    \bfd^{(k+1)}= \argmin_\bfd \left\{\tau \|\bfd \|_1+\frac{1}{2} \|\bfd-(\bfL\bfx^{(k+1)}+\bfg^{(k)})\|_2^2 \right\}. \label{eq:Breg5}
\end{align}
Since the elements of $\bfd$ are decoupled in \cref{eq:Breg5}, $\bfd^{(k+1)}$ can be computed using shrinkage operators. That is, each element is given by
\begin{align}
    d_j^{(k+1)} = \text{shrink}\left((\bfL\bfx^{(k+1)})_j+g^{(k)}_j,\tau\right),  \nonumber
\end{align}
where $\text{shrink}(x,\tau) = \text{sign}(x) \cdot \text{max}(|x|-\tau,0)$. \Cref{alg:SB} summarizes the SB algorithm.
Notice that SB is related to applying the alternating direction method of multipliers (ADMM) to the augmented Lagrangian in \cref{eq:Breg1} \cite{esser2009applications}.  

\begin{algorithm}
\caption{The SB Method for the $\ell_2$-$\ell_1$ Problem \cref{eq:TV1} }
\label{alg:SB}
\begin{algorithmic}[1]
\Require $\bfA, \bfb, \bfL,\tau, \bfd^{(0)} = \bfg^{(0)} = {\bfzero}$
\Ensure $\bfx$
    \For{$k=0,1,\dots$ until convergence}
    \State Estimate $\lambda^{(k)}$
    \State $\bfx^{(k+1)} = \argmin_\bfx \left\{\frac{1}{2}\|\bfA\bfx-\bfb\|_2^2 + \frac{\left(\lambda^{(k)}\right)^2}{2}\|\bfL\bfx-(\bfd^{(k)}-\bfg^{(k)})\|_2^2\right\}$ \label{line:xup}
    \State $\bfd^{(k+1)} = \argmin_\bfd \left\{ \tau\|\bfd\|_1 + \frac{1}{2}\|\bfd-(\bfL\bfx^{(k+1)}+\bfg^{(k)})\|_2^2\right\}$ \label{line:dup}
    \State $\bfg^{(k+1)} = \bfg^{(k)} + \left(\bfL\bfx^{(k+1)} - \bfd^{(k+1)}\right)$ \label{line:gup}
    \EndFor
\end{algorithmic}
\end{algorithm}

\subsection{The Majorization-Minimization method}\label{subsec:MM}
Another iterative method for solving \cref{eq:TV1} is the MM method.  MM is an optimization method that utilizes two steps: majorization and minimization \cite{hunter2004tutorial}.  In the majorization step, the function is majorized with a surrogate convex function.  The convexity of this function is then utilized in the minimization step. 
MM is applied to \cref{eq:TV1} in \cite{huang2017majorization}, where it is combined with the generalized Krylov subspace (GKS) method to solve large-scale image restoration problems.  In MM-GKS, the Krylov subspace is enlarged at each iteration, and MM is then applied to the problem in the subspace.  
Instead of building a Krylov subspace as in \cite{buccini2020lp,huang2017majorization, pasha2020krylov}, we will apply the MM method directly to \cref{eq:TV1}, using the fixed quadratic majorant from \cite{huang2017majorization}.  Other ways for majorizing \cref{eq:TV1} include fixed and adaptive quadratic majorants \cite{alotaibi2021restoration,buccini2020lp,pasha2020krylov}.  

With this majorant, the minimization problem at each $k^{\text{th}}$ iteration is
\begin{align} \label{eq:MMsolve}
    \min_{\bfx}\left\{\frac{1}{2}\|\bfA\bfx-\bfb\|^2_2 + \frac{\lambda^2}{2}\|\bfL\bfx-\bfw_{reg}^{(k)}\|^2_2\right\},
\end{align}
where $\lambda = (\mu/\varepsilon)^{1/2}$ and
\begin{align}
    \bfw_{reg}^{(k)} = \bfu^{(k)}\left(1-\left(\frac{\varepsilon^2}{(\bfu^{(k)})^2+\varepsilon^2}\right)^{\frac{1}{2}}\right) \label{eq:wreg}
\end{align}
with $\bfu^{(k)} = \bfL \bfx^{(k)}$. All operations in \cref{eq:wreg} are component-wise. Again, the parameters can be selected at each iteration. Here, as with our approach for the SB algorithm, we select $\lambda^{(k)}$ at each iteration and fix $\varepsilon$ as in \cite{buccini2022comparison} for MM-GKS. \cref{alg:MML} summarizes the MM algorithm.

\begin{algorithm}
\caption{The MM Method for the $\ell_2$-$\ell_1$ Problem \cref{eq:TV1} with a Fixed Quadratic Majorant}
\label{alg:MML} 
\begin{algorithmic}[1]
\Require $\bfA, \bfb, \bfL, \bfx^{(0)}, \varepsilon$
\Ensure $\bfx$
    \For{$k=0,1,\dots$ until convergence}
    \State $\bfu^{(k)} = \bfL\bfx^{(k)}$
    \State $\bfw_{reg}^{(k)} = \bfu^{(k)} \left(1-\left(\frac{\varepsilon^2}{\left(\bfu^{(k)}\right)^2+\varepsilon^2}\right)^{\frac{1}{2}}\right)$
    \State Estimate $\lambda^{(k)}$
    \State $\bfx^{(k+1)} = \argmin_{\bfx}\left\{ \frac{1}{2}\|\bfA\bfx-\bfb\|^2_2 + \frac{\left(\lambda^{(k)}\right)^2}{2}\|\bfL\bfx-\bfw_{reg}^{(k)}\|^2_2\right\}$
    \EndFor
\end{algorithmic}
\end{algorithm}

\subsection{The inner minimization problem}\label{subsec:inner}
At each iteration $k$ in both SB and MM we solve a problem of the form as given by \cref{eq:genTik_hk} with the regularization parameter $\lambda$ replaced by $\lambda^{(k)}$
(see \cref{eq:Breg3} for SB and \cref{eq:MMsolve} for MM). In SB, $\bfh^{(k)} = \bfd^{(k)}-\bfg^{(k)}$, while in MM, $\bfh^{(k)} = \bfw^{(k)}_{reg}$.
We focus on \cref{eq:genTik_hk} and regularization parameter estimation methods for finding $\lambda^{(k)}$ that can be used within each iteration of SB and MM.

\subsection{Matrix decompositions}\label{subsec:GSVD}
 To rewrite \cref{eq:genTik_hk}, we will use the generalized singular value decomposition (GSVD)~\cite{van1983matrix,hansen1998rank,howland2004generalizing}, which is a joint matrix decomposition of two matrices $\bfA$ and $\bfL$. 
Consider $\bfA \in \mathbb{R}^{m \times n}$ and $\bfL \in \mathbb{R}^{p \times n}$, and let $\tilde{n} = \text{rank}(\bfL)$.
When $m\geq n$ and the invertibility condition \cref{eq:invcond} is satisfied, there exist orthogonal matrices $\bfU \in \mathbb{R}^{m \times m}$ and $\bfV \in \mathbb{R}^{p \times p}$ and an invertible matrix $\bfX \in \mathbb{R}^{n \times n}$ such that 
\begin{equation}\label{gsvd}
    \bfA = \bfU \tbfUpsilon \bfX^{-1}, \quad \bfL = \bfV \tbfM \bfX^{-1}, 
\end{equation}
where 
\begin{align*}
    \tbfUpsilon &= \begin{bmatrix} \bfUpsilon & \bfzero_{\tilde{n}\times(n-\tilde{n})} \\ \bfzero_{(n-\tilde{n})\times\tilde{n}} &
        \bfI_{(n-\tilde{n})\times (n-\tilde{n})} \\
        {\bfzero}_{(m-n)\times\tilde{n}} & {\bfzero}_{(m-n) \times(n-\tilde{n})}
    \end{bmatrix},  \qquad \bfUpsilon = \text{diag}(\upsilon_1,\dots, \upsilon_{\tilde{n}}), \nonumber \\
    \tbfM &= \begin{bmatrix}
        \bfM & {\bfzero}_{\tilde{n} \times (n-\tilde{n})} \\ {\bfzero}_{(p-\tilde{n}) \times \tilde{n}} & {\bfzero}_{(p-\tilde{n}) \times (n-\tilde{n})}
    \end{bmatrix}, \qquad \bfM = \text{diag}(\mu_1,\dots,\mu_{\tilde{n}}), 
\end{align*}
with  $0 \leq \upsilon_1 \leq \cdots \leq \upsilon_{\tilde{n}} < 1$, $1 \geq \mu_1 \geq \cdots \geq \mu_{\tilde{n}} > 0$, and
$\upsilon^2_i + \mu^2_i = 1$   for  $i=1,\dots, \tilde{n}$.  For $i=1,\dots,\tilde{n}$, $\gamma_i = \upsilon_i/\mu_i$ are called the generalized singular values.
Here, and throughout, we use  $\bfI_{a \times b}$ and ${\bf {0}}_{a \times b}$ to denote the identity and zero matrices, respectively,  of dimension $a\times b$ and which may possibly have no rows or no columns. 

The GSVD is helpful for analyzing the properties of the solutions as a function of $\lambda$ and, consequently, for analyzing regularization methods. On the other hand, computationally, it is only helpful for small problems. Other joint decompositions, such as the discrete Fourier transform or the discrete cosine transform, can be practical for larger problems, as we will see in the numerical results section. 

\section{Estimation of the regularization parameter}\label{sec:reg}
In this section, we present three new methods for finding the regularization parameter in the iterative updates for the SB and MM methods. We consider in \cref{subsec:GCV} the GCV method and in \cref{subsec:chi2} the $\chi^2$ dof test. Note that while the $\chi^2$  dof test requires that $\Cb$ is known, there is no such requirement for the GCV method. We also consider in \cref{subsubsec:noncentralchi} a non-central $\chi^2$ formulation adapted to the iteration in the SB algorithm.  Subsection \ref{subsec:othermethods} includes two other parameter selection methods, DP and RWP, that will be used for comparison in the numerical results section.

\subsection{The method of generalized cross validation}\label{subsec:GCV}
GCV \cite{aster2018parameter, golub1979generalized} is a parameter selection method that selects $\lambda$ to minimize predictive risk.  GCV has been applied to the generalized Tikhonov problem \cref{eq:GenTik},
which has the solution   $\bfx_{\lambda} = \bfA_\lambda^\sharp \bfb$,
where $\bfA_\lambda^\sharp$ is the influence matrix defined by $\bfA_\lambda^\sharp= \bfA_\bfL^{-1}\bfA^\top$ with $\bfA_\bfL = \bfA^\top\bfA + \lambda^2\bfL^\top\bfL$.
In GCV, $\lambda$ is selected to minimize the GCV function
\begin{align}
    G(\lambda) = \frac{\norm{\bfA \bfx_\lambda - \bfb}_2^2}{\left[\text{Tr}\left(\bfI -  {\bfA}_\lambda\right)\right]^2} \label{GCVGen},
\end{align}
where ${\bfA}_\lambda = \bfA\bfA_\lambda^\sharp$ is the resolution matrix.
GCV has also been extended to other regularization problems \cite{ChEaOl:11}.
Formulae for the GCV to solve~\cref{eq:GenTik} in terms of the GSVD of $\{\bfA,\bfL\}$ are given in \cite{chung2014optimal, mead2008newton, vatankhah2014regularization} for different orderings of the sizes for $m,n,p$. 

In \cite{buccini2022comparison}, GCV is used to select the parameter at each iteration in MM when adaptive majorants are used. With adaptive majorants, the minimization problem has the same form as \cref{eq:GenTik}.  We will extend GCV to the case when the inner-minimization problem has the form \cref{eq:genTik_hk} for which the solution $\bfx_\lambda$ is different.  This also impacts the formula in terms of the GSVD of $\{\bfA,\bfL\}$, and impacts the form of the GCV function as summarized in the following Theorem.  The proof follows the steps given in \cite{ChEaOl:11}, but modified due to the vector $\bfh^{(k)}$.
\begin{theorem}\label{Thm:gcv}
     The GCV function for \cref{eq:genTik_hk}, where $\bfh^{(k)}$ is replaced by $\bfh$, has the same form as \cref{GCVGen}, but now with $\bfx_\lambda$ given by 
     \begin{align}
    \bfx_{\lambda} &= \bfA_\bfL^{-1}(\bfA^\top \bfb + \lambda^2 \bfL^\top \bfh) =
\bfA_\lambda^\sharp \bfb + \bfL^\sharp_\lambda \bfh,
    \label{xsolLh}
\end{align}
where 
$\bfL^\sharp_\lambda = \lambda^2\bfA_\bfL^{-1}\bfL^\top$.  When $m \geq n$ and the GSVD is defined as in \cref{gsvd}, $G(\lambda)$ for $n \geq p$ and $p>n$ is given by
 \begin{align}
G(\lambda) 
    =  \frac{\sum_{i=1}^{\tilde{n}} \left(\frac{\lambda^2\gamma_i(\bfv_i^\top \bfh)}{\gamma_i^2+\lambda^2}\right)^2 + \sum_{i=1}^{\tilde{n}} \left(\frac{\lambda^2(\bfu_i^\top\bfb)}{\gamma_i^2+\lambda^2}\right)^2 + \sum_{i=n+1}^m \left(\bfu_i^\top \bfb\right)^2- 2\sum_{i=1}^{\tilde{n}} \frac{\lambda^4 \gamma_i (\bfu_i^\top \bfb) (\bfv_i^\top \bfh)}{(\gamma_i^2+\lambda^2)^2}
}{\left[\max(m-n,0) + \sum_{i=1}^{\tilde{n}} \frac{\lambda^2}{\gamma_i^2+\lambda^2}\right]^2}  \label{eq:GCV_GSVDRR}, 
\end{align}
where we ignore any sum in which the lower limit is greater than the upper limit.
\end{theorem}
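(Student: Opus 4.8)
The plan is to establish the result in three stages: (i) derive the closed form of $\bfx_\lambda$ from the normal equations; (ii) show that the GCV function keeps the general shape of \cref{GCVGen} with this new $\bfx_\lambda$; and (iii) substitute the GSVD of $\{\bfA,\bfL\}$ to obtain \cref{eq:GCV_GSVDRR}. For (i), observe that $J$ in \cref{eq:genTik_hk} is a strictly convex quadratic because \cref{eq:invcond} makes $\bfA_\bfL=\bfA^\top\bfA+\lambda^2\bfL^\top\bfL$ SPD; setting $\nabla J(\bfx)=\bfA^\top(\bfA\bfx-\bfb)+\lambda^2\bfL^\top(\bfL\bfx-\bfh)=\bfzero$ gives $\bfA_\bfL\bfx=\bfA^\top\bfb+\lambda^2\bfL^\top\bfh$, hence \cref{xsolLh} with $\bfA_\lambda^\sharp=\bfA_\bfL^{-1}\bfA^\top$ and $\bfL^\sharp_\lambda=\lambda^2\bfA_\bfL^{-1}\bfL^\top$. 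The key point for the next stage is that the predicted data $\bfA\bfx_\lambda=\bfA_\lambda\bfb+\bfA\bfL^\sharp_\lambda\bfh$ is \emph{affine} in $\bfb$, with linear part equal to the same resolution matrix $\bfA_\lambda=\bfA\bfA_\lambda^\sharp$ that occurs when $\bfh=\bfzero$; the vector $\bfh$ enters only through the data-independent offset $\bfA\bfL^\sharp_\lambda\bfh$.

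For (ii), I would repeat the leave-one-out construction of \cite{ChEaOl:11}. Let $\bfx_\lambda^{[j]}$ solve \cref{eq:genTik_hk} with the $j$-th data equation removed; this is equivalent to solving the full problem with $b_j$ replaced by $(\bfA\bfx_\lambda^{[j]})_j$. Writing $\bfb^{[j]}=\bfb+\big((\bfA\bfx_\lambda^{[j]})_j-b_j\big)\bfe_j$ and using the affine form from stage (i) yields $(\bfA\bfx_\lambda^{[j]})_j-b_j=(\bfA_\lambda)_{jj}\big((\bfA\bfx_\lambda^{[j]})_j-b_j\big)+\big((\bfA\bfx_\lambda)_j-b_j\big)$, so that $b_j-(\bfA\bfx_\lambda^{[j]})_j=\big(b_j-(\bfA\bfx_\lambda)_j\big)/\big(1-(\bfA_\lambda)_{jj}\big)$ --- the offset $\bfA\bfL^\sharp_\lambda\bfh$ cancels, which is exactly what makes the classical identity survive. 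Forming the ordinary cross-validation sum $\frac{1}{m}\sum_j\big(b_j-(\bfA\bfx_\lambda^{[j]})_j\big)^2$ and replacing each $(\bfA_\lambda)_{jj}$ by the average $\frac{1}{m}\text{Tr}(\bfA_\lambda)$ then produces \cref{GCVGen}, now with $\bfx_\lambda$ as in \cref{xsolLh}.

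For (iii), write $\bfA=\bfU\tbfUpsilon\bfX^{-1}$ and $\bfL=\bfV\tbfM\bfX^{-1}$, so that $\bfA_\bfL=\bfX^{-\top}\bfD^{-1}\bfX^{-1}$ where $\bfD^{-1}=\tbfUpsilon^\top\tbfUpsilon+\lambda^2\tbfM^\top\tbfM$ is diagonal with entries $\upsilon_i^2+\lambda^2\mu_i^2=(\gamma_i^2+\lambda^2)\mu_i^2$ for $i\le\tilde{n}$ and $1$ for $\tilde{n}<i\le n$. Then $\bfA_\lambda=\bfA\bfA_\bfL^{-1}\bfA^\top=\bfU(\tbfUpsilon\bfD\tbfUpsilon^\top)\bfU^\top$, and with $\upsilon_i^2/(\upsilon_i^2+\lambda^2\mu_i^2)=\gamma_i^2/(\gamma_i^2+\lambda^2)$ this equals $\bfU\,\text{diag}\big(\frac{\gamma_1^2}{\gamma_1^2+\lambda^2},\dots,\frac{\gamma_{\tilde{n}}^2}{\gamma_{\tilde{n}}^2+\lambda^2},1,\dots,1,0,\dots,0\big)\bfU^\top$ with $n-\tilde{n}$ ones and $m-n$ zeros; hence $\text{Tr}(\bfI-\bfA_\lambda)=\max(m-n,0)+\sum_{i=1}^{\tilde{n}}\lambda^2/(\gamma_i^2+\lambda^2)$, the denominator of \cref{eq:GCV_GSVDRR}. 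Likewise $\lambda^2\bfA\bfA_\bfL^{-1}\bfL^\top=\lambda^2\bfU(\tbfUpsilon\bfD\tbfM^\top)\bfV^\top$ has its only nonzero entries equal to $\lambda^2\upsilon_i\mu_i/(\upsilon_i^2+\lambda^2\mu_i^2)=\lambda^2\gamma_i/(\gamma_i^2+\lambda^2)$ in positions $(i,i)$, $i\le\tilde{n}$. Combining these, $\bfA\bfx_\lambda-\bfb=\bfU\bfr$ with $r_i=\lambda^2\big(\gamma_i(\bfv_i^\top\bfh)-(\bfu_i^\top\bfb)\big)/(\gamma_i^2+\lambda^2)$ for $i\le\tilde{n}$, $r_i=0$ for $\tilde{n}<i\le n$, and $r_i=-\bfu_i^\top\bfb$ for $n<i\le m$; since $\bfU$ is orthogonal, $\|\bfA\bfx_\lambda-\bfb\|_2^2=\|\bfr\|_2^2$, and expanding the square over $i\le\tilde{n}$ gives exactly the numerator of \cref{eq:GCV_GSVDRR}. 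The block structures of $\tbfUpsilon^\top\tbfUpsilon$, $\tbfM^\top\tbfM$, and $\tbfUpsilon\bfD\tbfM^\top$ depend on whether $p\le n$ or $p>n$ only through the number of trailing zero rows or columns of the rectangular factors, so the single formula handles both orderings, with any empty sum discarded.

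The only genuinely delicate part is the bookkeeping in stage (iii): matching the rectangular zero-padding of $\tbfUpsilon$ and $\tbfM$ so that the resolution matrix carries precisely the block of $m-n$ zeros producing the $\max(m-n,0)$ term, and so that the residual receives no contribution from indices $\tilde{n}<i\le n$. Stages (i) and (ii) are routine once one observes that $\bfh$ contributes only a fixed shift to $\bfA\bfx_\lambda$; that is exactly what keeps the leave-one-out identity, and hence the classical GCV form \cref{GCVGen}, valid verbatim.
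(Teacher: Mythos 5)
Your proposal is correct, and its overall skeleton — closed-form solution from the normal equations, leave-one-out identity giving the GCV form \cref{GCVGen}, then GSVD substitution — matches the paper's proof; the GSVD bookkeeping in your stage (iii) (writing $\bfA_\bfL^{-1}=\bfX\bfD\bfX^{\top}$ with $\bfD=(\tbfUpsilon^\top\tbfUpsilon+\lambda^2\tbfM^\top\tbfM)^{-1}$, the residual components $r_i$, and the trace computation) is essentially identical to \cref{eq:AAt}--\cref{eq:GCVdenom}, only with $\bfD$ in place of $\boldsymbol{\Phi}^{-1}$. The one genuine difference is how you reach the key identity $b_j-(\bfA\bfx_\lambda^{[j]})_j=\bigl(b_j-(\bfA\bfx_\lambda)_j\bigr)/\bigl(1-(\bfA_\lambda)_{jj}\bigr)$: the paper computes $\bfx_\lambda^{[k]}$ explicitly via the masking matrix $\bfE_k$ and a Sherman--Morrison rank-one update of $\bfA_\bfL$, tracking the $\lambda^2\bfL^\top\bfh$ term through that algebra, whereas you invoke the classical leave-one-out substitution lemma (the deleted-point solution is the full-data solution with $b_j$ replaced by its prediction) together with the observation that $\bfA\bfx_\lambda$ is affine in $\bfb$ with hat matrix $\bfA_\lambda$ independent of $\bfh$, so the offset $\bfA\bfL^\sharp_\lambda\bfh$ cancels. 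Your route is shorter and isolates the conceptual reason the classical GCV form survives the shift $\bfh$; the paper's route is more self-contained, since it does not presuppose the substitution lemma (which, if you wrote this up, you should prove in a line: for the modified data the full objective equals the deleted objective plus $\tfrac12\bigl((\bfA\bfx)_j-\hat b_j\bigr)^2$, and $\bfx_\lambda^{[j]}$ minimizes both terms simultaneously, with uniqueness from $\bfA_\bfL$ being SPD under \cref{eq:invcond}). Both arguments then pass to the weighted form by replacing $(\bfA_\lambda)_{jj}$ with its average, exactly as in the paper.
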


\begin{proof}
In GCV, $\lambda$ is selected to minimize the average predictive risk when we leave out an entry of $\bfb$.  Let $\bfx_\lambda^{[k]}$ be the solution when the $k^{\text{th}}$ entry of $\bfb$ is missing.
Then, in GCV we select $\lambda$ to minimize the predictive risk for all $k$:
\begin{align*}
    \min_{\lambda} G(\lambda) = \min_{\lambda}\left\{\frac{1}{m}\sum_{k=1}^m ((\bfA \bfx^{[k]}_\lambda)_k - b_k)^2\right\}. \label{GCVfunc}
\end{align*}
Defining the $m \times m$ matrix
    $\bfE_k = \text{diag}(1,1,\dots,1,0,1,\dots,1)$,
where the $k^{\text{th}}$ entry is $0$, the solution $\bfx_\lambda^{[k]}$ can be written as
\begin{align}\label{eq:xk}
    \bfx_\lambda^{[k]} = (\bfA^\top\bfE_k^\top \bfE_k\bfA + \lambda^2 \bfL^\top \bfL)^{-1}(\bfA^\top \bfE_k^\top \bfE_k \bfb + \lambda^2\bfL^\top \bfh).
\end{align}
From the definition of $\bfE_k$, we have the following properties \cite{ChEaOl:11}:
\begin{align}
    \bfE^\top_k \bfE_k = \bfE_k,
    \text{  and   } \bfE_k = \bfI-\bfe_k \bfe_k^\top, \label{eq:prop1}
\end{align}
where $\bfe_k$ is the $k^{\text{th}}$ unit column vector of length $m$.
From these properties, we obtain 
\begin{equation*}
    \bfA^\top\bfE_k^\top \bfE_k\bfA + \lambda^2 \bfL^\top \bfL = (\bfA^\top\bfA + \lambda^2 \bfL^\top \bfL) - \bfa_k \bfa_k^\top = \bfA_\bfL - \bfa_k \bfa_k^\top,
\end{equation*}
where $\bfa_k^\top = \bfe_k^\top \bfA$ is the $k^{\text{th}}$ row of $\bfA$.  
Then, by applying the Sherman-Morrison formula 
$$(\bfB +\bfu\bfv^\top)^{-1} = \bfB^{-1}-\frac{\bfB^{-1}\bfu\bfv^\top\bfB^{-1}}{1 + \bfv^\top \bfB^{-1} \bfu}$$
to $\bfB = \bfA_\bfL$, $\bfu=-\bfa_k$, and $\bfv=\bfa_k$, and using the first     property in \cref{eq:prop1}, we rewrite \cref{eq:xk} as
\begin{align}
    \bfx_\lambda^{[k]} &=\left(\bfA_\bfL^{-1} + \frac{\bfA_\bfL^{-1}\bfa_k\bfa^\top_k\bfA_\bfL^{-1}}{1-\bfa_k^\top\bfA_\bfL^{-1}\bfa_k} \right)(\bfA^\top \bfE_k \bfb + \lambda^2\bfL^\top \bfh) \nonumber \\
    &= \left( \bfI + \frac{\bfA_\bfL^{-1}\bfa_k\bfa^\top_k}{1-\bfa_k^\top\bfA_\bfL^{-1}\bfa_k} \right)\bfA_\bfL^{-1}(\bfA^\top \bfE_k \bfb + \lambda^2\bfL^\top \bfh). \label{eq:xlambdaGCV}
\end{align}
Notice that $\bfa_k^\top\bfA_\bfL^{-1}\bfa_k$ is the $k^{\text{th}}$ diagonal entry of $\tilde{\bfA}_\lambda$, which we will denote by $\tilde{a}_{kk}$. Since $(\bfA \bfx_\lambda^{[k]})_k = \bfe^\top_k \bfA \bfx_\lambda^{[k]} = \bfa_{k}^\top \bfx_\lambda^{[k]}$, we use \cref{eq:xlambdaGCV} to obtain 
\begin{align}
    (\bfA \bfx_\lambda^{[k]})_k &=
    \left( \bfa_k^\top + \frac{\bfa_k^\top\bfA_\bfL^{-1}\bfa_k\bfa^\top_k}{1-\bfa_k^\top\bfA_\bfL^{-1}\bfa_k} \right)\bfA_\bfL^{-1}(\bfA^\top \bfE_k \bfb + \lambda^2\bfL^\top \bfh) \nonumber \\
    &=\left(1+\frac{\tilde{a}_{kk}}{1-\tilde{a}_{kk}}\right)\bfa_k^\top\bfA_\bfL^{-1}(\bfA^\top \bfE_k \bfb + \lambda^2\bfL^\top \bfh) \nonumber \\
    &=\left(\frac{1}{1-\tilde{a}_{kk}}\right)\bfe_k^\top\bfA\bfA_\bfL^{-1}(\bfA^\top \bfE_k \bfb + \lambda^2\bfL^\top \bfh) \nonumber \\
    &=\left(\frac{1}{1-\tilde{a}_{kk}}\right)\bfe_k^\top\bfA\bfA_\lambda^\sharp \bfE_k \bfb + \left(\frac{1}{1-\tilde{a}_{kk}}\right)\bfe^\top_k\bfA\bfL^\sharp_\lambda \bfh. \nonumber 
\end{align}
Then, by the second property in \cref{eq:prop1}  
\begin{align}
    (\bfA \bfx_\lambda^{[k]})_k - b_k &= \left(\frac{1}{1-\tilde{a}_{kk}}\right)\bfe_k^\top\bfA\bfA_\lambda^\sharp \bfE_k \bfb + \left(\frac{1}{1-\tilde{a}_{kk}}\right)\bfe^\top_k\bfA\bfL^\sharp_\lambda \bfh -\bfe^\top_k \bfb \nonumber \\
    &= \left(\frac{1}{1-\tilde{a}_{kk}}\right)\bfe_k^\top\bfA\bfA_\lambda^\sharp \bfb  \nonumber \\
     & \qquad \qquad \qquad - \left(\frac{\tilde{a}_{kk}}{1-\tilde{a}_{kk}}\right)\bfe_k^\top \bfb + \left(\frac{1}{1-\tilde{a}_{kk}}\right)\bfe^\top_k\bfA\bfL^\sharp_\lambda \bfh -\bfe^\top_k \bfb \nonumber  \\
    &= \left(\frac{1}{1-\tilde{a}_{kk}}\right)\bfe_k^\top\bfA\bfA_\lambda^\sharp \bfb + \left(\frac{1}{1-\tilde{a}_{kk}}\right)\bfe^\top_k\bfA\bfL^\sharp_\lambda \bfh -\left(1+\frac{\tilde{a}_{kk}}{1-\tilde{a}_{kk}}\right)\bfe^\top_k \bfb \nonumber \\
    &= \left(\frac{1}{1-\tilde{a}_{kk}}\right)\bfe_k^\top\left(\bfA\bfx_\lambda - \bfb\right)  
    = \frac{(\bfA\bfx_\lambda)_k - b_k}{1-\tilde{a}_{kk}}. \nonumber
\end{align}
Therefore, the GCV function is given by
\begin{align}
    G(\lambda) &= \frac{1}{m} \sum_{k=1}^m \left[ \frac{(\bfA\bfx_\lambda)_k - b_k}{1-\tilde{a}_{kk}}\right]^2. \nonumber 
    \end{align}
We can approximate the diagonal values of $\tilde{\bfA}_\lambda$ by the average diagonal value $\text{Tr}(\tilde{\bfA}_\lambda)/m$, which produces a weighted version of the function \cite{golub1979generalized}.  The resulting function is then  
    \begin{align*}
   G(\lambda) &= \frac{\norm{\bfA \bfx_\lambda - \bfb}_2^2}{\left[\text{Tr}\left(\bfI - \tilde{\bfA}_\lambda\right)\right]^2},
\end{align*}
which is the desired GCV function.

Next, we derive the formula of the GCV function \cref{eq:GCV_GSVDRR} in terms of the GSVD of $\{\bfA,\bfL\}$ when $m \geq n$, given in \cref{gsvd}. To do so, we first write       
    \begin{align}
        \tilde{\bfA}_\lambda = \bfA \bfA_\bfL^{-1}\bfA^\top = \bfU\tbfUpsilon \boldsymbol{\Phi}^{-1}\tbfUpsilon^\top\bfU^\top, \label{eq:AAt}
    \end{align} where $\boldsymbol{\Phi} = (\tbfUpsilon^\top\tbfUpsilon + \lambda^2\tbfM^\top \tbfM)$.
    The numerator then becomes
    \begin{align}\label{eq:GCVnum1}
        \|\bfA \bfx_\lambda - \bfb\|_2^2 &= \|\bfA \bfA_\lambda^\sharp \bfb + \bfA\bfL_\lambda^\sharp \bfh - \bfb\|_2^2 \nonumber \\
        &=\|\bfU \tbfUpsilon \boldsymbol{\Phi}^{-1}\tbfUpsilon^\top \bfU^\top \bfb + \lambda^2\bfU \tbfUpsilon \boldsymbol{\Phi}^{-1}\tbfM^\top \bfV^\top \bfh-\bfb\|_2^2. 
    \end{align}
        Factoring out the orthogonal matrix $\bfU$ from \cref{eq:GCVnum1}, we obtain
    \begin{align}
        &\|\bfU \tbfUpsilon \boldsymbol{\Phi}^{-1}\tbfUpsilon^\top \bfU^\top \bfb + \lambda^2\bfU \tbfUpsilon \boldsymbol{\Phi}^{-1}\tbfM^\top \bfV^\top \bfh-\bfb\|_2^2 \nonumber \\
        & \qquad \qquad \qquad \qquad  = 
        \| (\tbfUpsilon \boldsymbol{\Phi}^{-1}\tbfUpsilon^\top - \bfI_m) \bfU^\top \bfb + \lambda^2 \tbfUpsilon \boldsymbol{\Phi}^{-1}\tbfM^\top \bfV^\top \bfh\|_2^2 \nonumber \\
        & \qquad \qquad \qquad \qquad  = 
        \| \bfK \bfU^\top \bfb + \lambda^2 \bfZ \bfV^\top \bfh\|_2^2 \label{eq:gcvnumnorm},
    \end{align}
where $\bfK \in \mathbb{R}^{m \times m}$ and $\bfZ\in \mathbb{R}^{m \times p}$ are diagonal matrices, defined as
    \begin{align}
        \bfK = \begin{bmatrix}
            -\bfPsi & {\bfzero} & {\bfzero} \\
            {\bfzero} & {\bfzero}_{n-\tilde{n}} & {\bfzero} \\
            {\bfzero}&{\bfzero}&-{\bf{I}}_{m-n}\end{bmatrix} \text{ and } \bfZ = \begin{bmatrix}
            \bfzeta & {\bfzero} \\{\bfzero}&{\bfzero}_{(m-\tilde{n}) \times (p-\tilde{n})} \label{eq:diagups}
        \end{bmatrix}
    \end{align}
    with $\bfPsi,\bfzeta \in \mathbb{R}^{\tilde{n} \times \tilde{n}}$ given by 
    \begin{align}
        \bfPsi  
    = \text{diag}\left(\frac{\lambda^2}{\gamma_1^2+\lambda^2}, \dots, \frac{\lambda^2}{\gamma_{\tilde{n}}^2+\lambda^2}\right) \mbox{ and } \bfzeta  
    = \text{diag}\left(\frac{\gamma_1}{\gamma_1^2+\lambda^2},\dots,\frac{\gamma_{\tilde{n}}}{\gamma_{\tilde{n}}^2+\lambda^2}\right). \label{psi}
    \end{align} 
This can be written out as
    \begin{align}
           \| \bfK \bfU^\top \bfb + \lambda^2 \bfZ \bfV^\top \bfh\|_2^2
           &= \sum_{i=1}^{\tilde{n}} \left(\frac{\lambda^2\gamma_i(\bfv_i^\top \bfh)}{\gamma_i^2+\lambda^2}\right)^2 + \sum_{i=1}^{\tilde{n}} \left(\frac{\lambda^2(\bfu_i^\top\bfb)}{\gamma_i^2+\lambda^2}\right)^2 + \sum_{i=n+1}^m \left(\bfu_i^\top \bfb\right)^2 \label{eq:GCVnumer} \\
           & \qquad \qquad- 2\sum_{i=1}^{\tilde{n}} \frac{\lambda^4 \gamma_i (\bfu_i^\top \bfb) (\bfv_i^\top \bfh)}{(\gamma_i^2+\lambda^2)^2},  \, \nonumber
    \end{align}
    where we ignore any sum where the lower limit is greater than the upper limit. In \cref{eq:GCVnumer} we deliberately show the cross terms involving the diagonal matrix-vector products in the numerator.  Practically, we use the form given in \cref{eq:gcvnumnorm} that can be computed efficiently.

    In the denominator, we can use \cref{eq:AAt}  
    to obtain the desired result,  
    \begin{align}
        \left[\text{Tr}\left(\bfI - \tilde{\bfA}_\lambda\right)\right]^2 &=\left[\text{Tr}\left(\bfI - \bfU\tbfUpsilon \boldsymbol{\Phi}^{-1}\tbfUpsilon^\top\bfU^\top\right)\right]^2 \nonumber \\
        &= \left[\text{Tr}\left(\bfI - \tbfUpsilon \boldsymbol{\Phi}^{-1}\tbfUpsilon^\top\right)\right]^2 \nonumber 
        =\left[\sum_{i=1}^{\tilde{n}} \frac{\lambda^2}{\gamma_i^2+\lambda^2} + \max(m-n,0) \right]^2. \label{eq:GCVdenom}
    \end{align}
\end{proof}

To find the minimizer of $G(\lambda)$, following the approach in \cite{Regtools}, we first evaluate $G$ at $200$ logarithmically-spaced values of $\lambda$. We then take the $\lambda$ that gives the smallest value of $G$ and search for the minimizer in an interval around this value.

\subsection{The \texorpdfstring{$\chi^2$} {chi-squared} degrees of freedom test}\label{subsec:chi2}
 At each iteration in SB and MM, we minimize the functional $J(\bfx)$ in \cref{eq:genTik_hk}. The $\chi^2$ dof test is a parameter selection method that treats $\bfx$ as a random variable and utilizes the distribution of the functional evaluated at its minimizer to select $\lambda$ \cite{mead2008parameter}.  In \cite{mead2008newton}, the $\chi^2$ dof test is applied to the functional $J_L(\bfx)$
 which is defined by
\begin{align}\label{eq:chirls1}
   J_L(\bfx) = \|\bfA \bfx - \bfb\|_2^2 + \norm{\bfL(\bfx - \bfx_0)}_{\bfW_\bfh}^2.
\end{align}
Here, $\bfx_0$ is a reference vector of prior information on $\bfx$.  We assume that $\bfx_0 = \overline{\bfx}$, where $\overline{\bfx}$ is the expected value of $\bfx$. 
It is also assumed in \cite{mead2008newton} that  $\bfL$ has fewer rows than columns, $p\le n$, and 
$\bfL(\bfx-\bfx_0) = \boldsymbol{\epsilon}_\bfh$, where 
$ \boldsymbol{\epsilon}_\bfh\sim  \mathcal{N}(\bfzero,\bfC_\bfh)$ and $\bfC_\bfh$ is a SPD covariance matrix.
The matrix $\bfW_\bfh = \bfC_\bfh^{-1}$ is then the corresponding precision matrix. We also suppose that the model errors $\bfx-\bfx_0 = \boldsymbol{\epsilon}_\bfL$
are normally distributed with covariance $\bfC_\bfL$.
From this, $\bfb \sim \mathcal{N}(\bfA\overline{\bfx}, \bfI_m + \bfA\bfC_\bfL\bfA^\top)$. 
Then, in \cite[Theorem 3.1]{mead2008newton}, it is shown that for large $m$ and for given $\bfW_\bfh$, $J_L$ 
is a random variable that follows a $\chi^2$ distribution with $m+p-n$ degrees of freedom.

The goal of the $\chi^2$ method is then to find $\bfW_{\bfh}$ so that $J_L(\bfx)$ 
most resembles a $\chi^2$ distribution with the appropriate degrees of freedom.   
In \cite{renaut2010regularization}, the $\chi^2$ dof test for \cref{eq:chirls1} is extended to the case when $\bfx_0 \neq \overline{\bfx} $, in which case $J_L(\bfx)$ follows a non-central $\chi^2$ distribution.

Under the assumption that $\bfC_\bfh = \lambda^{-2}\bfI_p$, we define
\begin{align}\label{eq:chirls2}
    \bfx_\lambda = \argmin_{\bfx} J_L(\bfx) = \argmin_{\bfx} \left\{\norm{\bfA \bfx - \bfb}_2^2 + \lambda^2\norm{\bfL(\bfx - \bfx_0)}_2^2\right\}.
\end{align}
$J_L(\bfx)$ then follows a $\chi^2$ distribution. In particular, the following theorem gives its degrees of freedom, that holds for $p>n$ and $p\le n$. 
\begin{theorem}\label{Thm:chi2}
     Suppose that $\bfA\bfx - \bfb \sim \mathcal{N}({\bfzero},\bfI_m)$, $\bfL(\bfx-\bfx_0) \sim \mathcal{N}({\bfzero},\lambda^{-2}\bfI_p)$,  $m \geq n$, and $\tilde{n}=\text{rank}(\bfL)$. For a given $\lambda$ and the corresponding solution $\bfx_\lambda$, $J_L(\bfx_\lambda)$ is a random variable that follows a $\chi^2$ distribution with  $\tilde{m} = \tilde{n} + \max{(m-n,0)}$ degrees of freedom.
\end{theorem}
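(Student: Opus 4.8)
The plan is to put $J_L$ at its minimizer into closed form, reduce it to a quadratic form in the shifted data vector $\bfr := \bfb - \bfA\bfx_0$, diagonalize that quadratic form using the GSVD \cref{gsvd} of $\{\bfA,\bfL\}$, and finally recognize the result as a sum of independent, appropriately scaled $\chi^2_1$ random variables whose number is $\tilde{m}$.

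The first step uses only the normal equations. With $\bfy_\lambda := \bfx_\lambda - \bfx_0$, the minimizer of \cref{eq:chirls2} satisfies $\bfA_\bfL\bfy_\lambda = \bfA^\top\bfr$, where $\bfA_\bfL = \bfA^\top\bfA + \lambda^2\bfL^\top\bfL$. Substituting $\bfA\bfx_\lambda - \bfb = \bfA\bfy_\lambda - \bfr$ and $\bfL(\bfx_\lambda - \bfx_0) = \bfL\bfy_\lambda$ into $J_L$, expanding both squared norms, and using $\bfy_\lambda^\top\bfA_\bfL\bfy_\lambda = \bfr^\top\bfA\bfy_\lambda$, the cross terms collapse and one is left with the clean identity $J_L(\bfx_\lambda) = \bfr^\top(\bfI_m - \tilde{\bfA}_\lambda)\bfr$, where $\tilde{\bfA}_\lambda = \bfA\bfA_\bfL^{-1}\bfA^\top$ is the same resolution-type matrix that appears in the proof of \cref{Thm:gcv}. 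Everything after this is diagonalization plus a distributional count.

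For the diagonalization, \cref{gsvd} gives $\tilde{\bfA}_\lambda = \bfU\tbfUpsilon\bfPhi^{-1}\tbfUpsilon^\top\bfU^\top$ with $\bfPhi = \tbfUpsilon^\top\tbfUpsilon + \lambda^2\tbfM^\top\tbfM$, so $\bfI_m - \tilde{\bfA}_\lambda = \bfU\bfD\bfU^\top$ with $\bfD$ diagonal, entries $\lambda^2/(\gamma_i^2+\lambda^2)$ for $i = 1,\dots,\tilde{n}$, zeros for $i = \tilde{n}+1,\dots,n$, and ones for $i = n+1,\dots,m$; the vanishing middle block is the $\mathcal{N}(\bfL)$ contribution, well-defined thanks to \cref{eq:invcond}. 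Hence $J_L(\bfx_\lambda) = \sum_{i=1}^{\tilde{n}} \frac{\lambda^2}{\gamma_i^2+\lambda^2}(\bfu_i^\top\bfr)^2 + \sum_{i=n+1}^{m}(\bfu_i^\top\bfr)^2$, a sum of exactly $\tilde{n} + \max(m-n,0) = \tilde{m}$ nonzero terms. It remains to identify the law of each coefficient. I would write $\bfr = (\bfb - \bfA\bfx) + \bfA(\bfx - \bfx_0)$, i.e. the measurement error $-(\bfA\bfx-\bfb) \sim \mathcal{N}(\bfzero,\bfI_m)$, independent of $\bfx$ and hence of $\bfA(\bfx-\bfx_0)$, plus the propagated prior model error. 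For $i > n$ the last $m-n$ rows of $\tbfUpsilon$ vanish, so $\bfu_i^\top\bfA = \bfzero$ and $\bfu_i^\top\bfr = -\bfu_i^\top(\bfA\bfx-\bfb) \sim \mathcal{N}(0,1)$. For $i \le \tilde{n}$ the GSVD yields $\bfu_i^\top\bfA = \gamma_i\,\bfv_i^\top\bfL$ (both sides equal the $i$-th row of $\bfX^{-1}$ scaled by $\upsilon_i$ and $\mu_i$, respectively), whence $\bfu_i^\top\bfr = -\bfu_i^\top(\bfA\bfx-\bfb) + \gamma_i\,\bfv_i^\top\bfL(\bfx-\bfx_0) \sim \mathcal{N}\!\left(0,\ \frac{\gamma_i^2+\lambda^2}{\lambda^2}\right)$, so that $\frac{\lambda^2}{\gamma_i^2+\lambda^2}(\bfu_i^\top\bfr)^2 \sim \chi^2_1$ after rescaling. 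Orthogonality of $\bfU$ and $\bfV$, together with the independence of the two error vectors, makes all of these terms mutually independent, so $J_L(\bfx_\lambda)$ is a sum of $\tilde{m}$ independent $\chi^2_1$ variables, i.e. $\chi^2_{\tilde{m}}$.

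The step I expect to be the main obstacle is the rank-deficient / over-determined regularizer, that is $\tilde{n} < p$ and in particular $p > n$ (the $2$D case). The stated hypothesis $\bfL(\bfx-\bfx_0) \sim \mathcal{N}(\bfzero,\lambda^{-2}\bfI_p)$ cannot hold literally when $\text{rank}(\bfL) < p$, since then $\bfv_i^\top\bfL(\bfx-\bfx_0)$ is deterministically zero for $i > \tilde{n}$; one has to argue carefully that only the $\tilde{n}$ genuinely random coordinates $\bfv_1^\top\bfL(\bfx-\bfx_0),\dots,\bfv_{\tilde{n}}^\top\bfL(\bfx-\bfx_0)$ ever enter the quadratic form. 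This is precisely what shrinks the first sum from $p$ (or $n$) terms to $\tilde{n}$, and hence what makes $\tilde{m} = \tilde{n} + \max(m-n,0)$ the correct degree count rather than $m+p-n$. A secondary concern is carrying the three-block GSVD structure of $\tbfUpsilon$ uniformly through the cases $p\le n$ and $p>n$, and noting that, unlike the original result of \cite{mead2008newton}, no ``large $m$'' approximation enters here because the Gaussian hypotheses are imposed directly.
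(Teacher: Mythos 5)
Your proposal is correct and follows essentially the same route as the paper's proof: express the minimizer in closed form in terms of $\bfr=\bfb-\bfA\bfx_0$, diagonalize with the GSVD of $\{\bfA,\bfL\}$ to get the weighted sum $\sum_{i=1}^{\tilde{n}}\frac{\lambda^2}{\gamma_i^2+\lambda^2}(\bfu_i^\top\bfr)^2+\sum_{i=n+1}^{m}(\bfu_i^\top\bfr)^2$, and count $\tilde{m}$ standard-normal squares. The only (equivalent) difference is bookkeeping: you collapse $J_L(\bfx_\lambda)$ to the single quadratic form $\bfr^\top(\bfI_m-\tilde{\bfA}_\lambda)\bfr$ and obtain each component's variance directly from $\bfu_i^\top\bfA=\gamma_i\,\bfv_i^\top\bfL$, whereas the paper keeps the two residual norms, computes the covariance of $\bfr$, and whitens with the matrix $\bfT$; your remark that only the $\tilde{n}$ nondegenerate coordinates of $\bfL(\bfx-\bfx_0)$ enter the quadratic form is exactly how the $p>n$ case is absorbed there as well.
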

\begin{proof}
The case when $m \geq n \geq p$ is given in \cite[Theorem 3.1]{mead2008newton}.
For the general case when $m \geq n$ without a restriction on $p$, the approach follows that given in the proof of \cite[Theorem 3.1]{mead2008newton}.
Noting that the solution of \cref{eq:chirls2} is given by
$\bfx_\lambda = \bfx_0 + \bfA_\bfL^{-1}\bfA^\top\bfr$, where $\bfr = \bfb-\bfA\bfx_0$, we can use the GSVD of $\{\bfA,\bfL\}$ in \cref{gsvd}, to write $J_L(\bfx_\lambda)$ as
\begin{align}
    J_L(\bfx_\lambda) &=
    \|\bfA\bfx_0 + \bfA \bfA_\bfL^{-1} \bfA^\top \bfr - \bfb\|_2^2 + \lambda^2\|\bfL \bfA_\bfL^{-1}\bfA^\top\bfr\|^2_2 \nonumber \\ 
    &=\norm{\bfA \bfA_\bfL^{-1} \bfA^\top \bfr - \bfr}_2^2 + \lambda^2\|\bfL \bfA_\bfL^{-1}\bfA^\top\bfr\|^2_2 \nonumber \\
    &= \|\bfU\tbfUpsilon\boldsymbol{\Phi}^{-1}\tbfUpsilon^\top \bfU^\top \bfr - \bfr\|_2^2 + \lambda^2\|\bfV\tbfM \boldsymbol{\Phi}^{-1} \tbfUpsilon^\top \bfU^\top\bfr\|^2_2 \nonumber \\
    \label{eq:chiP} &= \|(\tbfUpsilon\boldsymbol{\Phi}^{-1}\tbfUpsilon^\top-\bfI_m) \bfs\|_2^2 + \lambda^2\|\tbfM \boldsymbol{\Phi}^{-1} \tbfUpsilon^\top \bfs\|^2_2,
\end{align}
where $\bfs=[s_1,\dots,s_m]^\top=\bfU^\top\bfr$.
We note that $\tbfUpsilon \boldsymbol{\Phi}^{-1}\tbfUpsilon^\top - \bfI_m = \bfK$ and
    \begin{align}
    \tbfM \boldsymbol{\Phi}^{-1} \tbfUpsilon^\top = \begin{bmatrix}
            \bfzeta & {\bfzero} \\{\bfzero}&{\bfzero}_{(p-\tilde{n}) \times (m-\tilde{n})} \nonumber
        \end{bmatrix},
    \end{align}
where $\bfK$ is defined in \cref{eq:diagups} and $\bfzeta \in \mathbb{R}^{\tilde{n} \times \tilde{n}}$ is defined in \cref{psi}.

Thus, \cref{eq:chiP} becomes
\begin{align}
    J_L(\bfx_\lambda)&= \sum_{i=1}^{\tilde{n}} \frac{\lambda^4}{(\gamma_i^2 + \lambda^2)^2}s_i^2 + \sum_{i=n+1}^{m} s_i^2 + \sum_{i=1}^{\tilde{n}} \frac{\lambda^2\gamma^2_i}{(\gamma_i^2 + \lambda^2)^2}s_i^2 \nonumber \\
    &= \sum_{i=1}^{\tilde{n}} \frac{\lambda^2}{\gamma_i^2 + \lambda^2}s_i^2 + \sum_{i=n+1}^{m} s_i^2
    = \sum_{i=1}^{\tilde{n}} k_i^2 + \sum_{i=n+1}^{m} k_i^2,  \label{eq:chi2mp}
\end{align}
with $\bfk = [k_1,\dots,k_m]^\top = \bfT\bfs$, where
\begin{align*}
    \bfT = \begin{bmatrix}
        \bfPsi^{1/2} & {\bfzero} & {\bfzero} \\
        {\bfzero} & \bfI_{n-\tilde{n}} & {\bfzero} \\
        {\bfzero} & {\bfzero} & \bfI_{m-n}
    \end{bmatrix}.
\end{align*}
Next, we need to show that the $k_i$ are distributed standard normal.  Using the GSVD, we have that $\bfV\tilde{\bfM}\bfX^{-1}(\bfx-\bfx_0) \sim \mathcal{N}({\bfzero}, \lambda^{-2}\bfI_p)$.  Using the properties for normally distributed vectors, we obtain $\bfx-\bfx_0 \sim \mathcal{N}({\bfzero}, \lambda^{-2}\bfX\diag{(\bfM^{-2},{\bfzero}_{n-\tilde{n}})}\bfX^{-1})$.  The residual $\bfr$ then has mean zero and covariance matrix 
\begin{align}
    \bfI_m + \lambda^{-2}\bfU\bfUpsilon\begin{bmatrix}
        \bfM^{-2} & {\bfzero} \\ {\bfzero} & {\bfzero}_{n-\tilde{n}}
    \end{bmatrix}\bfUpsilon^\top\bfU^\top= \bfU\begin{bmatrix}
        \bfPsi^{-1} & {\bfzero} & {\bfzero} \\
        {\bfzero} & \bfI_{n-\tilde{n}} & {\bfzero} \\
        {\bfzero} & {\bfzero} & \bfI_{m-n}
    \end{bmatrix}\bfU^{\top}=\bfU\bfT^{-2}\bfU^\top.\nonumber
\end{align}
Thus, $\bfk$ has covariance $\bfT\bfU^\top(\bfU\bfT^{-2}\bfU^{\top})\bfU \bfT^\top = \bfI_m$ and the $k_i$ follow a standard normal distribution. Hence, $J_L(\bfx_\lambda)$ is the sum of $\tilde{m} = \tilde{n} + \max{(m-n,0)}$ squared standard normal random variables.
\end{proof}

When $\tilde{m}$ is large, the $\chi^2_{\tilde{m}}$ distribution can be approximated by a normal distribution with mean $\tilde{m}$ and variance $2\tilde{m}$, denoted by $\mathcal{N}(\tilde{m},2\tilde{m})$.  We can use this approximation to find $\lambda$ such that $J_L(\bfx_\lambda)$ is then approximately distributed as $\mathcal{N}(\tilde{m},2\tilde{m})$.
As in \cite{mead2008newton}, we form a $(1-\alpha)$ confidence interval to find $\lambda$:
\begin{align*}
    \tilde{m} - z_{\alpha/2}\sqrt{2\tilde{m}} \leq J_L(\bfx_\lambda) \leq \tilde{m}+z_{\alpha/2}\sqrt{2\tilde{m}}.
\end{align*}
Here, $z_{\alpha/2}$ is the relevant $z$-value for the standard normal distribution, which defines the bounds for the $(1-\alpha)$ confidence interval.
Defining $F(\lambda) = J_L(\bfx_\lambda) - \tilde{m}$, we   apply Newton's method to $\lambda$ such that 
\begin{align} \label{eq:NMint}
    - z_{\alpha/2}\sqrt{2\tilde{m}} \leq F(\lambda) \leq z_{\alpha/2}\sqrt{2\tilde{m}}.
\end{align}
The derivative of $F(\lambda)$ is $F^\prime(\lambda) = 2\lambda\norm{\tilde{\bfs}}_2^2$.
Here, $\tilde\bfs = [\tilde s_1,\dots,\tilde s_{\tilde{n}}]^\top$, where $\tilde{s}_i = {\gamma_i s_i}/(\gamma_i^2 + \lambda^2)$
for $i=1,\dots, \tilde{n}$. 
We notice immediately that for $\lambda>0$, as required, $F^\prime>0$, and hence it is reasonable to use a root-finding method for $\lambda$, provided that we can find an interval in which $F(\lambda)$ changes sign. This is discussed in more detail in \cite{mead2008newton}.  An example of $F(\lambda)$ is given in \cref{fig:FvFC:a}.

The tolerance for Newton's method depends on the selection of the significance level $\alpha$.  A smaller value of $\alpha$ makes the $(1-\alpha)$ confidence interval wider, increasing the probability that a random value of $\lambda$ satisfies \cref{eq:NMint}.  On the other hand, a larger $\alpha$ narrows this interval, increasing the confidence that $\lambda$ satisfies the $\chi^2$ distribution.

For SB and MM, the 
inner minimization problem \cref{eq:genTik_hk} is not of the desired form \cref{eq:chirls1} for the central $\chi^2$ dof test. In particular, we need $\bfL\bfx_0$ in place of $\bfh^{(k)}$ in \cref{eq:genTik_hk}. To find $\bfx_0$ such that $\bfh^{(k)} \approx \bfL\bfx_0$, we will use the the $\bfA$-weighted generalized inverse of $\bfL$, denoted $\bfL^\dagger_{\bfA}$ \cite{elden1982weighted}.  In terms of the GSVD of $\{\bfA,\bfL\}$, $\bfL^\dagger_{\bfA} = \bfX\tbfM^\dagger \bfV^\top$,
 where $\tbfM^\dagger \in \mathbb{R}^{n \times p}$ has diagonal entries ${1}/{\mu_i}$.  If $p \geq n$, then $\bfL_{\bfA}^\dagger=\bfL^\dagger$, where $\bfL^\dagger$ is the pseudoinverse of $\bfL$ \cite{hansen1998rank}. 
 For $p<n$, $\bfL_{\bfA}^\dagger \neq \bfL^\dagger$ in general.  

In terms of the GSVD of $\{\bfA, \bfL\}$, $\bfL\bfL^\dagger_{\bfA} = \bfV\tbfM \tbfM^\dagger \bfV^\top$. When $\bfL$ is invertible, and when 
$p \leq n$, $\tbfM \tbfM^\dagger = \bfI_p$ so $\bfL\bfL^\dagger_{\bfA}=\bfI_p$. In the $\chi^2$ dof test, we will estimate $\bfx_0$ as $\bfL^\dagger_{\bfA} \bfh^{(k)}$ and use $\bfL\bfL_{\bfA}^\dagger \bfh^{(k)}$ in place of $\bfh^{(k)}$.  With this, \cref{eq:genTik_hk} can be rewritten in the form of \cref{eq:chirls1}. Provided that $\bfx_0$ is approximately the expected value of $\bfx$, we may use the central $\chi^2$ dof test at each iteration of SB and MM.  In the case where $\bfx_0 \not \approx \overline{\bfx}$, the central $\chi^2$ dof test no longer applies.

\subsubsection{The non-central \texorpdfstring{$\chi^2$}{chi-squared} dof test}\label{subsubsec:noncentralchi}
When $\bfx_0 \not \approx \overline{\bfx}$,   $J_L(\bfx)$
follows a non-central $\chi^2$ distribution with $\tilde{m}$ degrees of freedom and non-centrality parameter $c$, defined by 
\begin{align*}
    c(\lambda) = \sum_{i=1}^{\tilde{n}} \frac{\lambda^2q_i^2}{\gamma_i^2 + \lambda^2} + \sum_{i=n+1}^m q_i^2, 
\end{align*}
where $\bfq=[q_1,\dots,q_m]^\top=\bfU^\top \bfA(\overline{\bfx} - \bfx_0)$ \cite{renaut2010regularization}. 
As with the central $\chi^2$ dof test, the goal of the non-central test is to find $\lambda$ so that $J_L \sim \chi^2(\tilde{m},c(\lambda))$. When $\tilde{m}$ is sufficiently large, this non-central $\chi^2$ distribution can be approximated by a normal distribution with mean $\tilde{m}+c(\lambda)$ and variance $2\tilde{m} + 4c(\lambda)$.  Using this approximation, we can form a $(1-\alpha)$ confidence interval to find $\lambda$:
\begin{align*}
    \tilde{m} + c(\lambda) - z_{\alpha/2}\sqrt{2\tilde{m}+4c(\lambda)} \leq J_L(\bfx_\lambda) \leq \tilde{m}+c(\lambda)+z_{\alpha/2}\sqrt{2\tilde{m}+4c(\lambda)}.
\end{align*}
Defining $F_C(\lambda) = J_L(\bfx_\lambda)- (\tilde{m} + c(\lambda))$, this is equivalent to solving the problem
\begin{align}\label{NM2}
    -z_{\alpha/2}\sqrt{2\tilde{m}+4c(\lambda)} \leq F_C(\lambda) \leq z_{\alpha/2}\sqrt{2\tilde{m}+4c(\lambda)}.
\end{align}
In this case, the  derivative of $F_C(\lambda)$   is
\begin{align}\label{eq:derivqtilde}
      F_C^\prime(\lambda)=  
      2\lambda\left(\norm{\tilde{\bfs}}_2^2 - \norm{\tilde{\bfq}}_2^2\right),
\end{align}
where $\tilde\bfq=[\tilde q_1,\dots,\tilde q_{\tilde{n}}]^\top$ with $\tilde{q}_i = {\gamma_i q_i}/(\gamma_i^2 + \lambda^2)$
for $i=1,\dots, \tilde{n}$. 
Notice that when $\bfx_0 = \overline{\bfx}$, $\bfq=\boldsymbol{0}$ and the non-central test reduces to the central test.

\begin{figure}
    \centering
\subfigure[$F(\lambda)$ \label{fig:FvFC:a}]{\includegraphics[width=3.65cm]{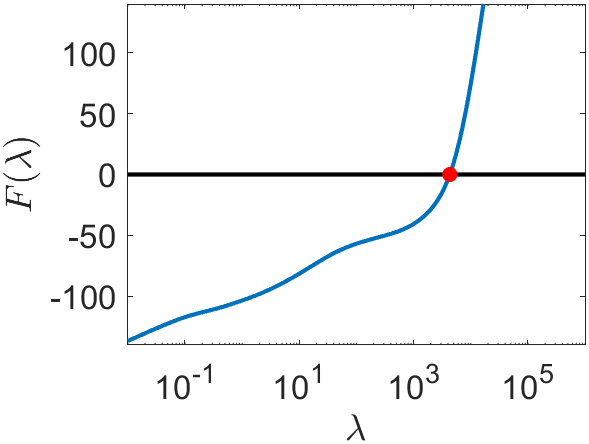}}
\subfigure[$F_C(\lambda)$ \label{fig:FvFC:b}]{\includegraphics[width=3.65cm]{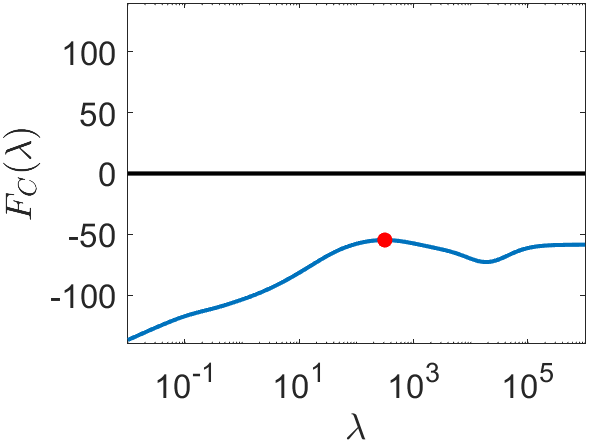}}
\caption{ Plots of $F(\lambda)$ and $F_C(\lambda)$ for MM, where the selected $\lambda$ is marked.  In \cref{fig:FvFC:b}, $F_C(\lambda)$ is not monotonic and does not have a root.}
    \label{fig:FvFC}
\end{figure}

From \cref{eq:derivqtilde}, it is immediate that  $F_C^\prime(\lambda)$ is not of constant sign, so $F_C(\lambda)$ may potentially have no root or multiple roots.  \Cref{fig:FvFC:b} shows an example where $F_C(\lambda)$ is not monotone and has no root.  
If $F_C(\lambda)$ has a single root, we can apply Newton's method to find $\lambda$ such that \cref{NM2} holds.
Otherwise, we follow the method presented in 
\cite{renaut2010regularization}
to find $\lambda$ such that $F_C(\lambda)$ is close to zero. 

To use the non-central $\chi^2$ dof test in SB and MM, we still rewrite \cref{eq:genTik_hk}
using $\bfL \bfL^\dagger_{\bfA} \bfh^{(k)}$ in place of $\bfh^{(k)}$.  We also need to have an estimate of the expected value of $\bfx$ to use as $\overline{\bfx}$ in the test.  Within both SB and MM, one option is to use the current solution $\bfx^{(k)}$ as an approximation of $\overline{\bfx}$.  Another option in SB is to estimate $\overline{\bfx}$ using $\bfL^\dagger_{\bfA} \bfd^{(k)}$ since $\bfd$ in SB approximates $\bfL\bfx$ quite well.  We will use $\overline{\bfx}=\bfx^{(k)}$, assuming that at a given step of the iteration we are looking to reduce the noise around the current estimate, and that for increasing $k$ with convergence we have $\overline{\bfx}=\lim_{k}\bfx^{(k)}$.

\subsection{Other Methods} \label{subsec:othermethods}
DP and RWP are other established methods that have been applied to \cref{eq:genTik_hk} at each iteration \cite{buccini2022comparison,etna_vol53_pp329-351}.  
Both methods utilize a closed form of the residual at each iteration to select $\lambda$.  

DP is a method that uses an estimate of the norm of the noise to bound the norm of the residual. Assuming that $\delta>0$ is an estimate of the norm of the noise and $\nu>1$ is a parameter, then $\lambda$ is selected so that
\begin{align}
    \norm{\bfA\bfx_\lambda - \bfb}_2 \leq \nu\delta. \label{eq:DP}
\end{align}
DP is applied at each iteration in MM in \cite{buccini2022comparison} and can be applied in its iterative form within other iterative methods.  Using the expression for the residual  $\bfr_\lambda = \bfA\bfx_\lambda-\bfb$, \cref{eq:DP} can be solved using a root-finding method. Since we normalize the noise, we will use $\delta = \norm{\bfe_\bfb}_2 = \sqrt{n}$ and set $\nu=1.01$.

RWP is a parameter selection method \cite{etna_vol53_pp329-351} for 2D problems that selects $\lambda$ so that the 2D residual most resembles white noise.  We take the residual $\bfr_\lambda$, reshape it into its 2D form $\bfR_\lambda \in \mathbb{R}^{N \times N}$, and use the normalized auto-correlation of $\bfR_\lambda$ to measure whiteness.  A non-negative scalar measure of the whiteness, $\mathcal{W}: \mathbb{R}^{N \times N} \to \mathbb{R}^+$, is then used to compare the residuals for different $\lambda$.  This measure is given by  
\begin{align*}
    \mathcal{W}(\lambda) = \norm{\rho(\boldsymbol{\mathcal{F}}\bfR_\lambda)} =  \frac{\norm{\boldsymbol{\mathcal{F}}\bfR_\lambda \star \boldsymbol{\mathcal{F}}\bfR_\lambda}_2^2}{\norm{\boldsymbol{\mathcal{F}}\bfR_\lambda}^4_2}=\frac{\sum_{l,m} |\left(\boldsymbol{\mathcal{F}}\bfR_\lambda\right)_{l,m}|^4}{\left(\sum_{l,m} |\boldsymbol{(\mathcal{F}}\bfR_\lambda)_{l,m}|^2\right)^2},
\end{align*}
where $\boldsymbol{\mathcal{F}} \in \mathbb{C}^{N \times N}$ is the 2D discrete Fourier transform matrix and $\star$ denotes the 2D correlation operator. The optimal $\lambda$ for the RWP is then $\lambda$ that minimizes $\mathcal{W}$. To minimize $\mathcal{W}$, we will use the same method that was used to minimize $G$ in GCV.  RWP is incorporated within ADMM in \cite{etna_vol53_pp329-351} while in \cite{buccini2022comparison}, it is applied to the residual at each iteration. As with DP, the expression for $\bfr_\lambda$ can be used to select $\lambda$ at each iteration in SB and MM.

\section{Numerical examples} \label{sec:Examples}
In this section, we apply SB and MM to two deblurring problems to test the parameter selection methods presented in \cref{sec:reg}.
To compare methods, we will use the relative error ($\text{RE}$) which is defined by
\begin{align*}\label{eq:RE}
    \text{RE} = \frac{\norm{\bfx-\bfx_{true}}_2}{\norm{\bfx_{true}}_2}
\end{align*}
and the improved signal to noise ratio (ISNR), which is defined by
\begin{align*}\label{eq:ISNR}
    \text{ISNR} = 20\log_{10}\left(\frac{\norm{\bfb-\bfx_{true}}_2}{\norm{\bfx-\bfx_{true}}_2}\right).
\end{align*}
Both $\text{RE}$ and $\text{ISNR}$ measure how close a solution is to $\bfx_{true}$, where a closer solution has a smaller $\text{RE}$ and a larger $\text{ISNR}$.

In practice, $\bfx_{true}$ is unknown, so to understand when these methods have converged, we will consider the relative change in $\bfx$ and $\lambda$.  The relative change in $\bfx$, defined as
\begin{align}\label{eq:RCx}
    \text{RC}(\bfx^{(k)}) =\frac{\norm{\bfx^{(k)}-\bfx^{(k-1)}}_2}{\norm{\bfx^{(k-1)}}_2},
\end{align}
measures how much the solution is changing at a given iteration.  If it is below a specified tolerance, $\text{TOL}_\bfx$, then the solution is not changing much and we can consider the solution to be converged and stop iterating.  This tolerance should depend on the noise level. Because we use the weighted data fidelity term with $\bfA$ and $\bfb$ as defined in \cref{eq:Reweighting}, it is sufficient to use a fixed tolerance, here $\text{TOL}_\bfx = 0.001$,  for all noise levels.

The relative change in $\lambda^2$,
\begin{align}\label{eq:RClambda}
    \text{RC}((\lambda^{(k)})^2) =\frac{|(\lambda^{(k)})^2 - (\lambda^{(k-1)})^2|}{|(\lambda^{(k-1)})^2|},
\end{align}
measures how much $\lambda^2$ changes from one iteration to the next.  If $\text{RC}((\lambda^{(k)})^2)<\text{TOL}_\lambda$,
then, we can stop selecting $\lambda$ and fix it at the current value until convergence. This reduces the computational cost from applying the parameter selection method at every iteration. In both \cref{eq:RCx,eq:RClambda}, we assume, without loss of generality, that the denominator is non-zero. 

For each example, we select $\lambda$ at each iteration with the different selection methods and compare the results to the \textit{optimal} fixed $\lambda$. 
To find this \textit{optimal} $\lambda$, we run each example to completion for $121$ values of $\lambda$ that are logarithmically spaced from $10^{-1}$ to $10^{3}$ and select as \textit{optimal} the $\lambda$ that has the smallest $\text{RE}$ at convergence. This means that the \textit{optimal} solution is not feasible in general as it requires knowledge of the true solution. We report the cost for the \textit{optimal} as running the method once with a fixed $\lambda$, but the actual cost is more expensive as it requires knowing the true solution and running the methods multiple times to find the \textit{optimal} $\lambda$. Moreover, $\tau$ and $\varepsilon$ are tuned to minimize the $\text{RE}$, requiring the knowledge of the true solution,  when the \textit{optimal} $\lambda$ is used.  When the parameter selection methods perform as well as using the fixed \textit{optimal} parameter, we should not expect there to be significant differences between solutions. In these cases, we instead focus on the cost of each of the methods to achieve these solutions, using the \textit{optimal} solution as a point of comparison.

For the $\chi^2$ dof tests,  we use $\bfx_0^{SB} = \bfL_{\bfA}^\dagger(\bfd-\bfg)$ and $\bfx_0^{MM} = \bfL_{\bfA}^\dagger(\bfw_{reg})$. For the non-central $\chi^2$ dof test, we use $\overline{\bfx} = \bfx^{(k)}$. 
Furthermore,  in all the numerical experiments we use a significance level of $\alpha = 0.999$ for the $\chi^2$ dof test. 

\subsection{A 1D example}\label{subsec:1D}
For the first numerical example, we will consider a 1D blurring problem.  The matrix $\tilde{\bfA} \in \mathbb{R}^{N \times N}$ is symmetric Toeplitz where the first row is 
\begin{align}    
\bfz = \frac{1}{\sqrt{2\pi\sigma^2}}\begin{bmatrix}
        \text{exp}\left(\frac{-[0:\text{band}-1]^2}{2\sigma^2}\right) & {\bfzero_{N-\text{band}}}
    \end{bmatrix}. \label{eq:zrow1}
\end{align}
We use $N=512$, $\sigma^2 = 24$, and $\text{band} = 60$.  Gaussian noise with $\text{SNR} = 20$ is added to the blurred signal to produce $\tilde{\bfb}$. To demonstrate the capabilities of our methods, we use a signal with sharp edges. The true signal $\bfx_{true}$ and the blurred and noisy data $\tilde{\bfb}$ are given in \cref{fig:xTrue}. 
  We solve this problem with SB and MM
  for $\bfL \in \mathbb{R}^{(N-1) \times N}$ defined as the discretization of the first derivative operator with zero boundary conditions:
\begin{align*}
    \bfL = \begin{bmatrix}
        -1 & 1 & 0 &\cdots && 0\\
        0 & -1 & 1 & 0 & \cdots& 0 \\
        \vdots & & \ddots & \ddots && \vdots \\
        0 & \cdots & 0 & -1 & 1 & 0 \\
        0 & \cdots & & 0& -1 & 1
    \end{bmatrix}. \label{eq:L1}
\end{align*}
Given the size of this 1D example, we use the GSVD of $\{\bfA,\bfL\}$ to solve the inner problem in both SB and MM. Four different methods for selecting $\lambda$ at each iteration are tested: GCV, the central $\chi^2$ dof test, the non-central $\chi^2$ dof test, and DP.  These are compared with the results obtained using the \textit{optimal} fixed $\lambda$.  In the $\chi^2$ dof tests, the significance level of $\alpha = 0.999$  corresponds to a bound of $0.042$ on $F(\lambda)$. For $F_C(\lambda)$, the bound will be at least $0.042$ but it increases with $\lambda$ and $\norm{\bfx^{(k)}-\bfx_0}_2$.  In this example, the bound on $F_C(\lambda)$ ranges from $0.042$ to $0.1$, with the larger value obtained in the initial iterations. For the four selection methods, $\text{TOL}_\lambda = 0.01$ is applied and compared with selecting parameters at every iteration. 

\subsubsection{Split Bregman}

\begin{figure}
    \centering
    \subfigure[1D deblurring problem: $\bfx$ and $\tilde{\bfb}$ \label{fig:xTrue}]{\includegraphics[width=3.65cm]{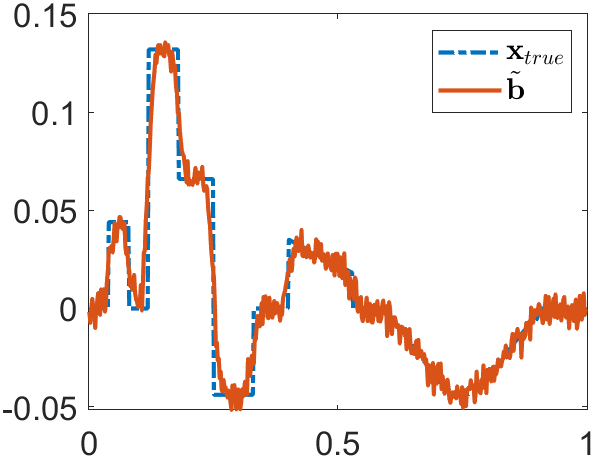}}
    \subfigure[SB, Optimal $\lambda$, $\lambda=232.6$
\label{fig:SBna:a}]{
\includegraphics[width=3.65cm]{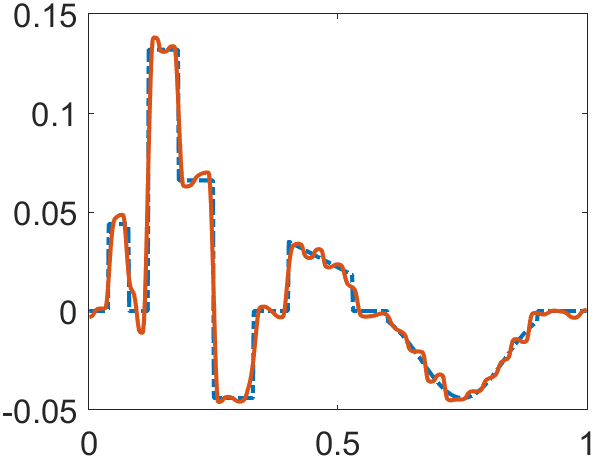}}
\subfigure[MM, Optimal $\lambda$, $\lambda=464.2$
\label{fig:MMna:a}]{
\includegraphics[width=3.65cm]{Images/1Dexample/PS11MM_Opt.png}}
    \caption{ \cref{fig:xTrue}: $\bfx$ and $\tilde{\bfb}$ for the 1D deblurring problem ($\text{SNR} = 20$). \cref{fig:SBna:a,fig:MMna:a}show the SB and MM solutions where $\lambda$ is fixed at the optimal.} 
    \label{fig:Setup}
\end{figure}

\begin{figure}
    \centering
    \subfigure[Relative error\label{fig:SBn:a}]{
\includegraphics[scale=.28]{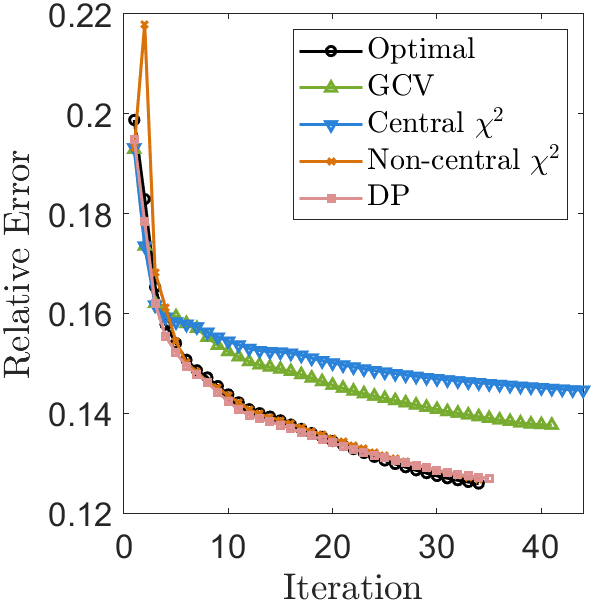}}
\subfigure[Relative change in $\bfx$ \label{fig:SBn:b}]{
\includegraphics[scale=.28]{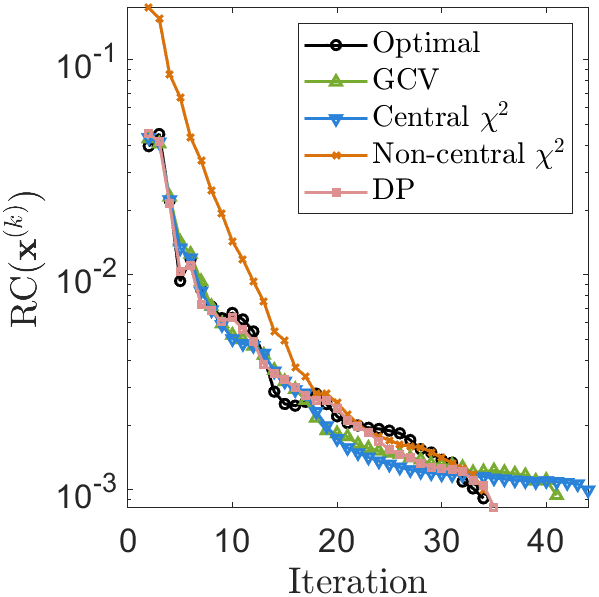}}
\subfigure[ Selection of $\lambda$ \label{fig:SBn:c}]{
\includegraphics[scale=.28]{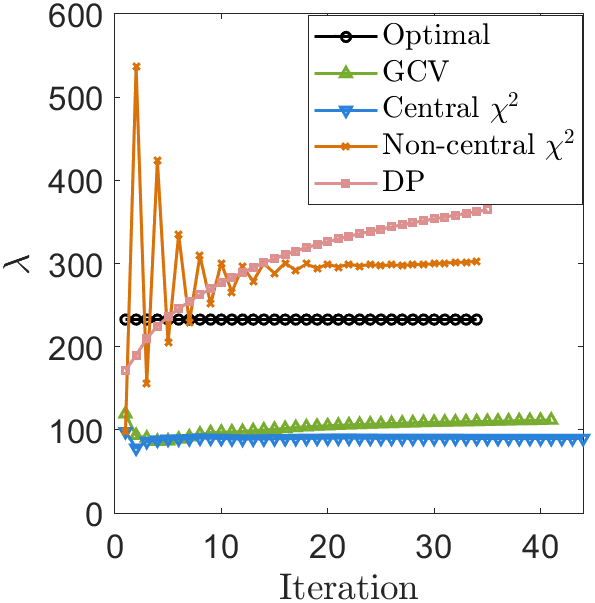}}
\subfigure[ Relative change in $\lambda^2$ \label{fig:SBn:d}]{
\includegraphics[scale=.28]{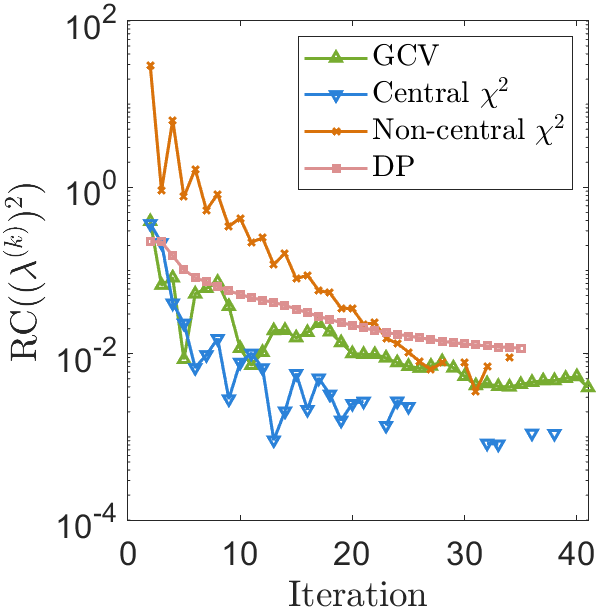}}
\subfigure[ISNR \label{fig:SBn:e}]{
\includegraphics[scale=.28]{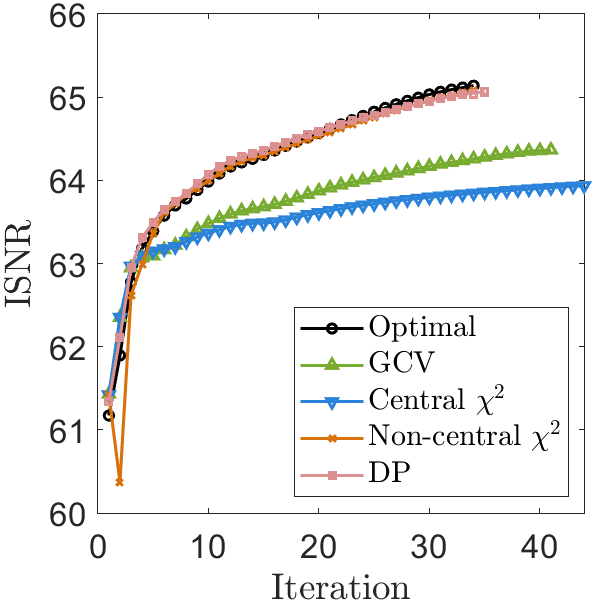}}
    \caption{Results for SB applied to \cref{fig:xTrue} where $\lambda$ is fixed at the optimal $\lambda=232.6$, or selected at each iteration with GCV or the $\chi^2$ dof tests.
    \Cref{fig:SBn:a} plots the $\text{RE}$ by iteration, \cref{fig:SBn:b} plots the relative change in $\bfx$, \cref{fig:SBn:c} plots the $\lambda$ selected, \cref{fig:SBn:d} plots the relative change in $\lambda^2$, and \cref{fig:SBn:e} plots the ISNR.}  
    \label{fig:SBn}
\end{figure}

\begin{figure}
    \centering
\subfigure[$\lambda$ selected by GCV
\label{fig:SBna:b}]{
\includegraphics[width=3.65cm]{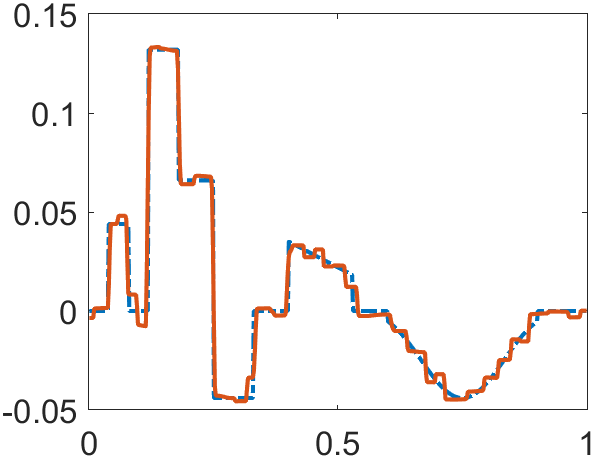}}
    \subfigure[ $\lambda$ selected by central $\chi^2$
    \label{fig:SBna:c}]{
\includegraphics[width=3.65cm]{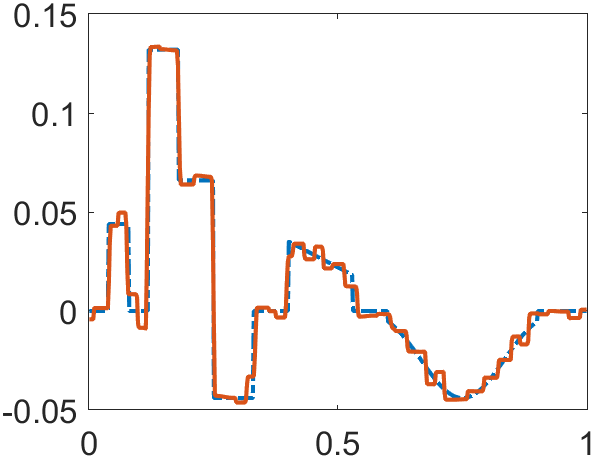}}
    \subfigure[ $\lambda$ selected by non-central $\chi^2$
    \label{fig:SBna:d}]{
\includegraphics[width=3.65cm]{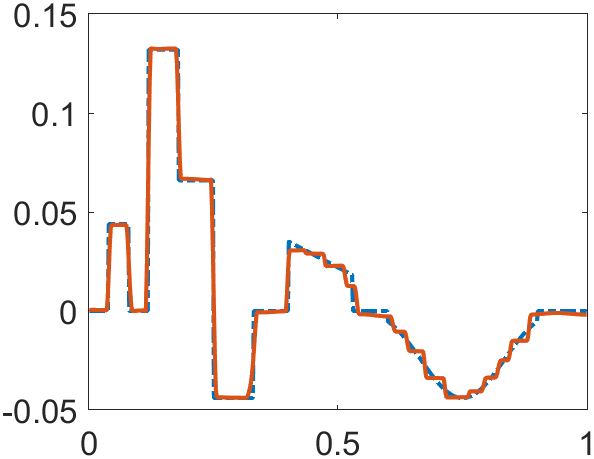}}
    \subfigure[ $\lambda$ selected by DP
    \label{fig:SBna:e}]{
\includegraphics[width=3.65cm]{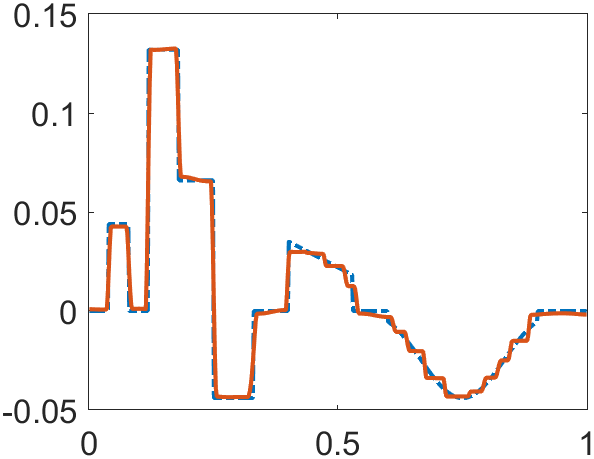}}

\subfigure[$\lambda$ selected by GCV, $\text{TOL}_\lambda = 0.01$
\label{fig:SBna:f}]{
\includegraphics[width=3.65cm]{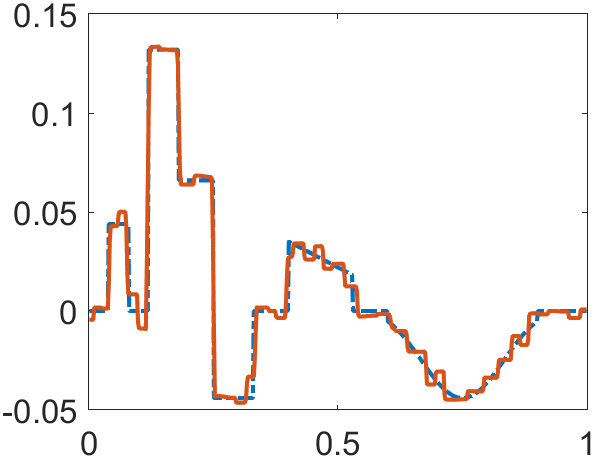}}
\subfigure[$\lambda$ selected by central $\chi^2$, $\text{TOL}_\lambda = 0.01$ 
\label{fig:SBna:g}]{
\includegraphics[width=3.65cm]{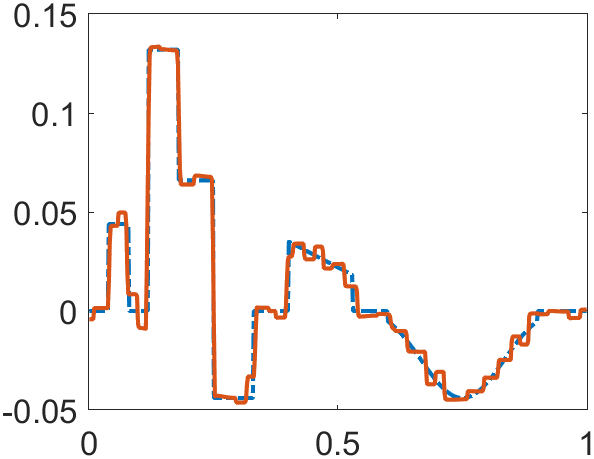}}
\subfigure[$\lambda$ selected by non-central $\chi^2$, $\text{TOL}_\lambda = 0.01$ \label{fig:SBna:h}]{
\includegraphics[width=3.65cm]{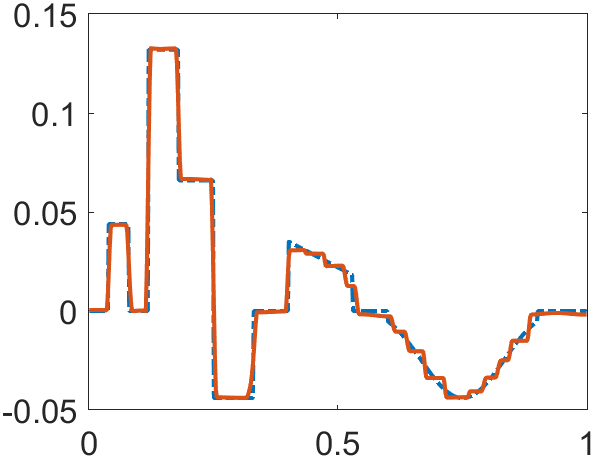}}
    \subfigure[ $\lambda$ selected by DP, $\text{TOL}_\lambda = 0.01$
    \label{fig:SBna:i}]{
\includegraphics[width=3.65cm]{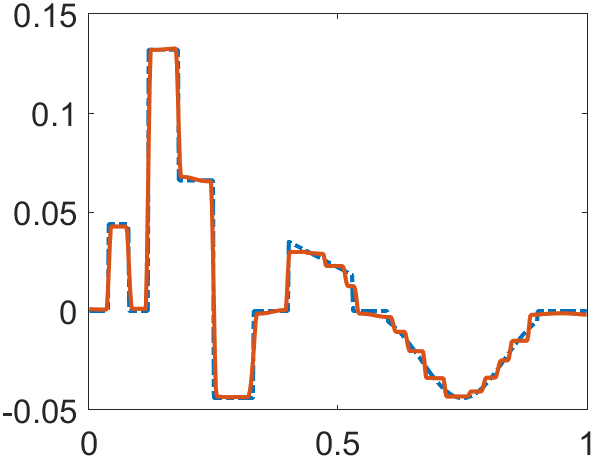}}

    \caption{SB solutions at convergence for \cref{fig:xTrue}.}
    \label{fig:SBna}
\end{figure}

In SB, we set $\tau = 0.005$ as $\text{RC}(\bfx^{(k)})$ decreases smoothly until convergence for this $\tau$.
The ideal value of $\tau$ depends on the noise level, where $\tau$ should be smaller when the noise level is lower. In this case, a larger value of $\tau$ causes $\text{RC}(\bfx^{(k)})$ to spike upward at different iterations, even for a fixed value of $\lambda$.
The \textit{optimal} fixed $\lambda$ in this case is $\lambda = 232.6$  and the corresponding solution is shown in \cref{fig:SBna:a}. 
The results for SB with these selection methods are shown in \cref{fig:SBn}, with solutions provided in \cref{fig:SBna}.  The values of $\lambda$ selected by the non-central $\chi^2$ dof test oscillate in the first 15 iterations.
This is to be anticipated with the choice $\bfx^{(k)}$ for $\overline{\bfx}$. Initially, we expect that $\bfx^{(k)}$ does not serve as a good estimate for $\overline{\bfx}$ to be used at step $k+1$ because, in the beginning steps, the values for $\bfx^{(k)}$ are far from convergence. The bound on $F_C(\lambda)$ oscillates with $\lambda$ and is largest at iteration $2$ where it is $0.1$. This suggests that a smaller $\alpha$ might be better in the early iterations when $\bfx^{(k)}$ is further from $\overline{\bfx}$ as this would increase the bound and therefore dampen the oscillations.  Despite the oscillations for the early iterations, the selection of $\lambda$ still converges for $\alpha = 0.999$. 
On the other hand, the values of $\lambda$ selected by GCV and the central $\chi^2$ dof test converge earlier.  

\subsubsection{Majorization-Minimization}

\begin{figure}
    \centering
    \subfigure[Relative error\label{fig:MMn:a}]{
\includegraphics[scale=.28]{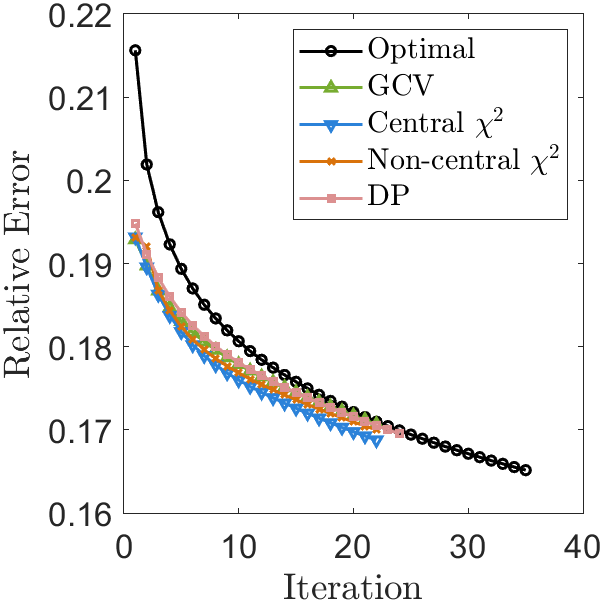}}
\subfigure[Relative change in $\bfx$ \label{fig:MMn:b}]{
\includegraphics[scale=.28]{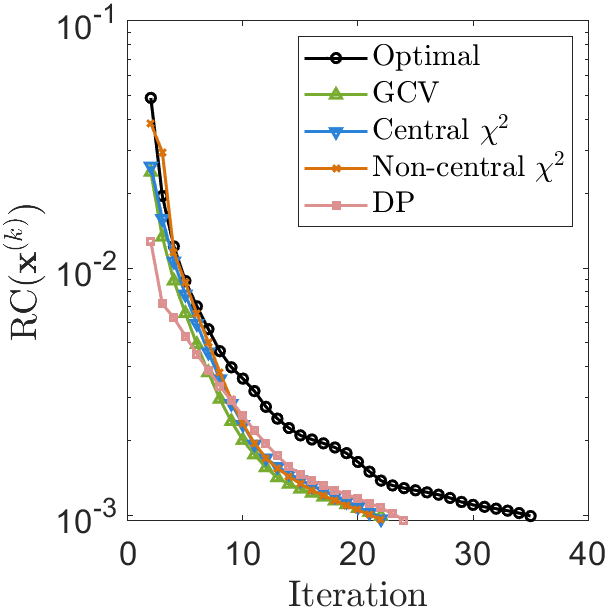}}
\subfigure[ Selection of $\lambda$ \label{fig:MMn:c}]{
\includegraphics[scale=.28]{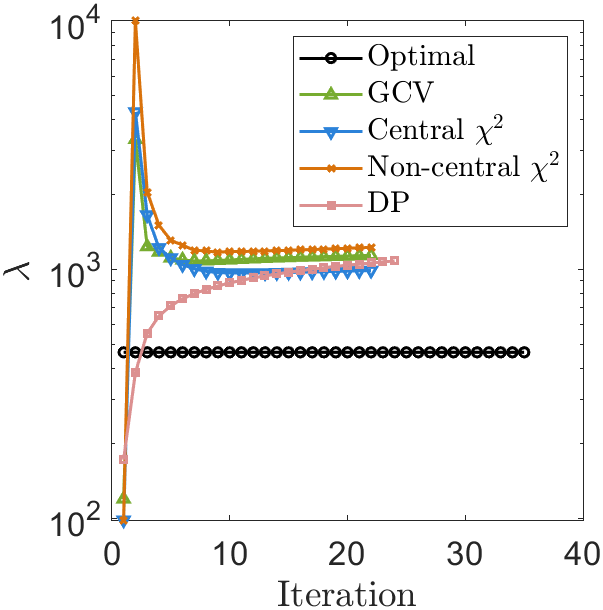}}
\subfigure[ Relative change in $\lambda^2$ \label{fig:MMn:d}]{
\includegraphics[scale=.28]{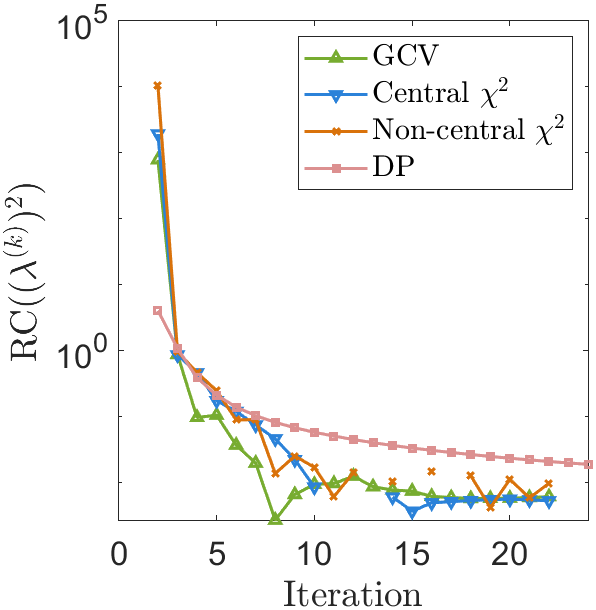}}
\subfigure[ISNR \label{fig:MMn:e}]{
\includegraphics[scale=.28]{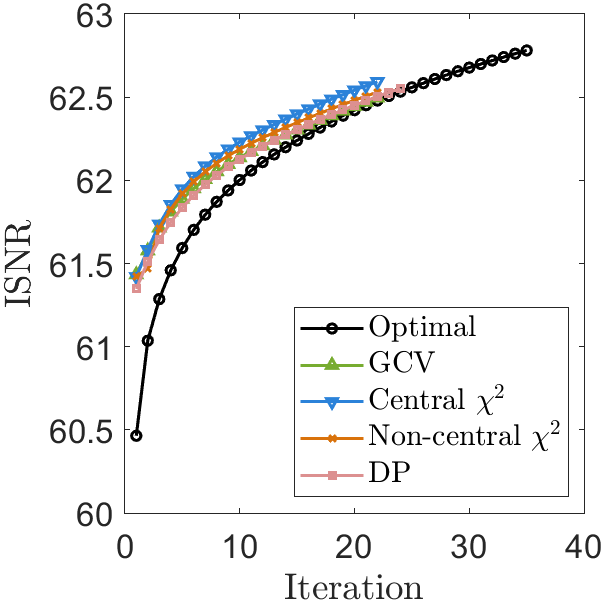}}
    \caption{Results for MM applied to \cref{fig:xTrue} where $\lambda$ is fixed at the optimal $\lambda=464.2$, or selected at each iteration with GCV or the $\chi^2$ dof tests.
    \Cref{fig:MMn:a} plots the $\text{RE}$ by iteration, \cref{fig:MMn:b} plots the relative change in $\bfx$, \cref{fig:MMn:c} plots the $\lambda$ selected, \cref{fig:MMn:d} plots the relative change in $\lambda^2$, and \cref{fig:MMn:e} plots the ISNR. } 
    \label{fig:MMn}
\end{figure}

\begin{figure}
    \centering
\subfigure[$\lambda$ selected by GCV
\label{fig:MMna:b}]{
\includegraphics[width=3.65cm]{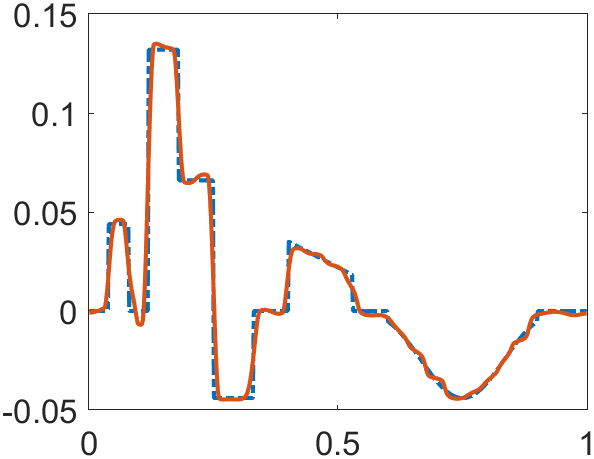}}
    \subfigure[ $\lambda$ selected by central $\chi^2$
    \label{fig:MMna:c}]{
\includegraphics[width=3.65cm]{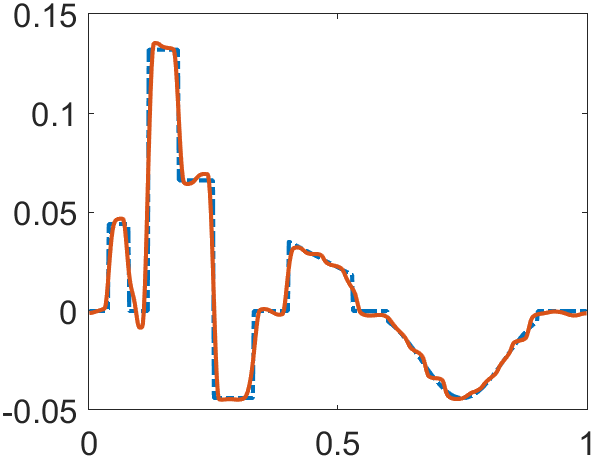}}
    \subfigure[ $\lambda$ selected by non-central $\chi^2$
    \label{fig:MMna:d}]{
\includegraphics[width=3.65cm]{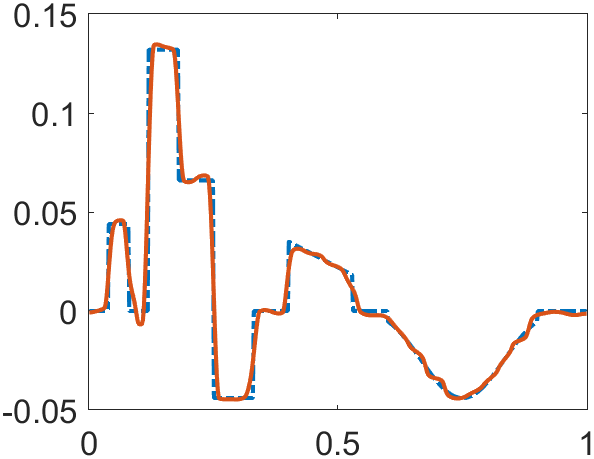}}
    \subfigure[ $\lambda$ selected by DP
    \label{fig:MMna:e}]{
\includegraphics[width=3.65cm]{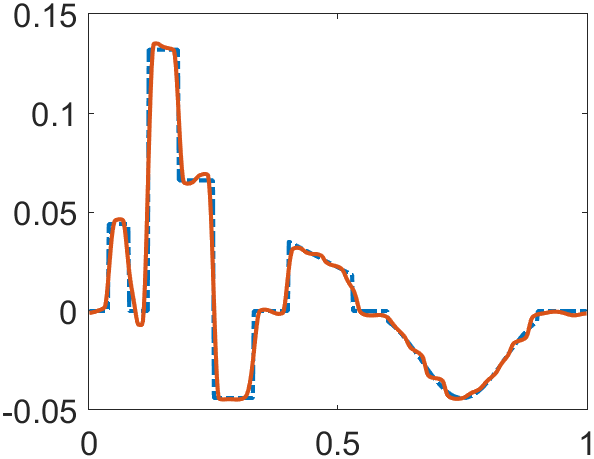}}

\subfigure[$\lambda$ selected by GCV, $\text{TOL}_\lambda = 0.01$
\label{fig:MMna:f}]{
\includegraphics[width=3.65cm]{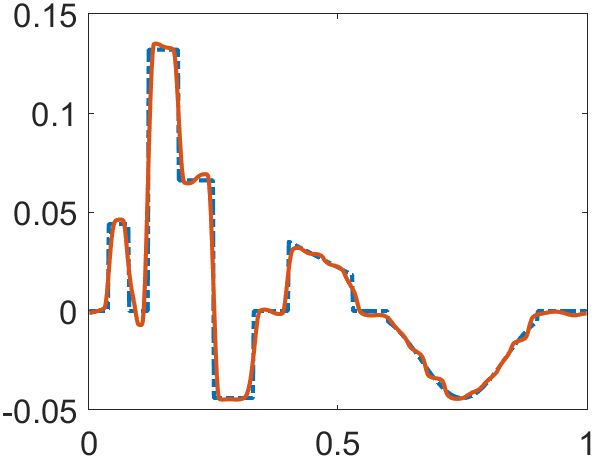}}
\subfigure[$\lambda$ selected by central $\chi^2$, $\text{TOL}_\lambda = 0.01$ 
\label{fig:MMna:g}]{
\includegraphics[width=3.65cm]{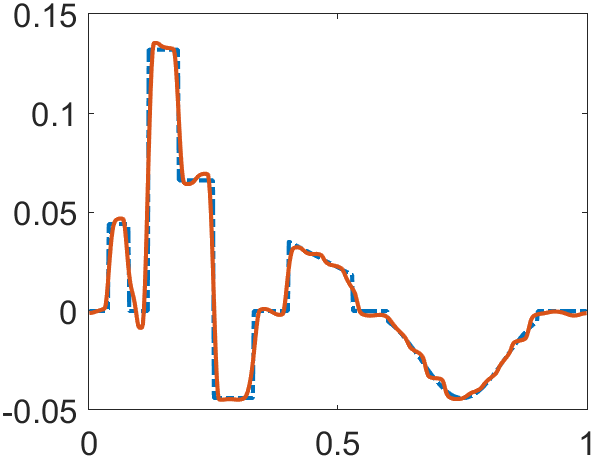}}
\subfigure[$\lambda$ selected by non-central $\chi^2$, $\text{TOL}_\lambda = 0.01$ \label{fig:MMna:h}]{
\includegraphics[width=3.65cm]{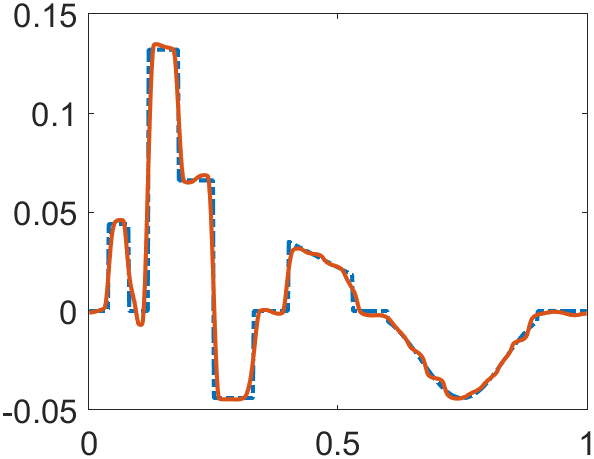}}
    \subfigure[ $\lambda$ selected by DP, $\text{TOL}_\lambda = 0.01$
    \label{fig:MMna:i}]{
\includegraphics[width=3.65cm]{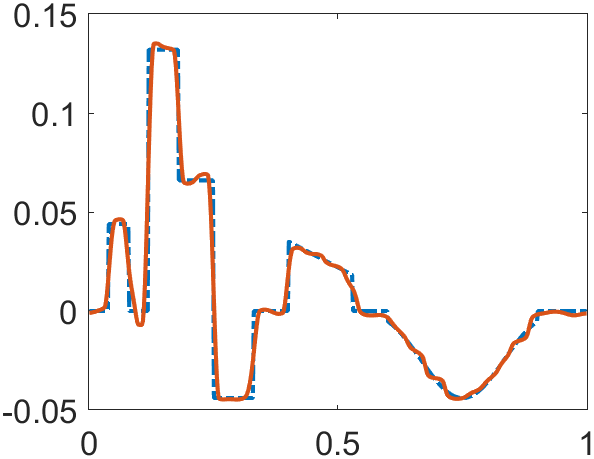}}

    \caption{MM solutions at convergence for \cref{fig:xTrue}.}
    \label{fig:MMna}
\end{figure}

In MM, we fix $\varepsilon = 0.0003$, which is near the suggested value in \cite{buccini2021choice,buccini2019l} relative to the magnitude of $\bfx$. The ideal value of $\varepsilon$ depends on the magnitude of $\bfu^{(k)} = \bfL\bfx^{(k)}$.  
In this case, the \textit{optimal} fixed $\lambda$ is $\lambda= 464.2$, which produces the solution in \cref{fig:MMna:a}.
The results of the selection methods are shown in \cref{fig:MMn} with solutions in \cref{fig:MMna}. From these plots, both GCV and the $\chi^2$ dof tests select $\lambda$ well, leading to better convergence than the \textit{optimal}. 
Initial values of $\lambda$ found using the non-central $\chi^2$ dof test oscillate, but still converge.  The bound on $F_C(\lambda)$ changes with the oscillations, reaching a maximum of $0.068$ in iteration $2$.  These initial steps serve to find a stabilizing value for $\lambda$ that is suitable for the noise in the  steps after the tolerance $\text{TOL}_\lambda$ has been achieved. 

\subsubsection{Discussion on the one-dimensional results}\label{sec:onedres}
The results using the SB and MM iterative methods with the regularization parameter methods are summarized in \cref{tab:Ex1}.  In SB, the non-central $\chi^2$ dof test and DP perform best of the parameter selection methods, converging in fewer iterations and to solutions with smaller RE and larger ISNR. When $\text{TOL}_\lambda = 0.01$, GCV and the central $\chi^2$ dof test converge faster than the other methods but have solutions with a larger $\text{RE}$.  In MM, the four selection methods each converge to solutions having approximately the same $\text{RE}$. Of these methods, GCV is the fastest. 
In general, SB and MM are different, with SB taking longer to converge but to solutions with a smaller $\text{RE}$ and larger \text{ISNR}.
The results suggest that the best method with respect to $\text{RE}$ and ISNR is SB with the non-central $\chi^2$ dof test. Using $\text{TOL}_\lambda = 0.01$ does not significantly impact the results of the selection methods. 

\begin{table}[htp]
\centering
\caption{$\text{RE}$, $\text{ISNR}$, iterations, and computation time (in seconds) for the solutions to the 1D example in \cref{fig:xTrue}. The best results are shown in boldface, excluding the methods where $\lambda$ is fixed at the \textit{optimal}.}
\begin{tabular}{lcccccccc}
\hline
                         & \multicolumn{4}{c}{No $\text{TOL}_\lambda$} & \multicolumn{4}{c}{$\text{TOL}_\lambda=0.01$} \\ \cmidrule(lr){2-5} 
                         \cmidrule(lr){6-9} 
Method                   & RE            & ISNR         & Iter. & Time        & RE            & ISNR          & Iter. & Time        \\ \hline
SB, Optimal             & 0.126         & 65.14        & 34  & 0.04        &               &               &               \\
SB, GCV                  & 0.138         & 64.37        & 41  & 0.34         & 0.146         & 63.86         & 46  & 0.06          \\
SB, Central $\chi^2$     & 0.145         & 63.93        & 44  & 0.10         & 0.145         & 63.92         & 46  & 0.08          \\
SB, Non-central $\chi^2$ & \textbf{0.127}         & \textbf{65.08}        & 34 & 0.20          & \textbf{0.127}         & \textbf{65.08}         & 34  & 0.13          \\
SB, DP                   & \textbf{0.127}         & 65.06        & 35 & 0.18          & 0.127         & 65.06         & 35  & 0.17          \\ \hline
MM, Optimal             & 0.165         & 62.78        & 35  & 0.04         &               &               &               \\
MM, GCV                  & 0.171         & 62.50        & \textbf{22} & 0.06          & 0.170         & 62.53         & 23  & \textbf{0.04}          \\
MM, Central $\chi^2$     & 0.169         & 62.59        & \textbf{22} & 0.07          & 0.169         & 62.59         & \textbf{22}  & 0.06          \\
MM, Non-central $\chi^2$ & 0.170         & 62.53        & \textbf{22}  & 0.11         & 0.170         & 62.54         & \textbf{22}  & 0.07          \\
MM, DP                   & 0.170         & 62.55        & 24 & 0.12          & 0.170         & 62.55         & 24  & 0.12       \\ \bottomrule
\end{tabular}
\label{tab:Ex1}
\end{table}

\subsection{A 2D example} \label{subsec:2D}
For our 2D example, we use an image deblurring problem.  We use the cameraman image in \cref{fig:Ex2Setup:a}, 
which is $512$ by $512$ pixels.  Since the results are indistinguishable when using full images, we will instead focus on the zoomed-in section of $\bfx_{true}$ shown in \cref{fig:Ex2Setup:xt}. The matrix $\tilde{\bfA} \in \mathbb{R}^{512^2 \times 512^2}$ now models a separable blur, defined by $\tilde{\bfA} = (\bfC_\bfz \otimes \bfC_\bfz)$, where $\otimes$ defines a Kronecker product and the matrix $\bfC_\bfz$ is a circulant matrix with first row given by \cref{eq:zrow1} with $N=512$, $\text{band} = 40$, and $\sigma^2 = 16$.  White Gaussian noise with $\text{SNR} = 20$ is then added to obtain $\tilde{\bfb}$. The blurred and noisy image is given in \cref{fig:Ex2Setup:b}, with the blurred zoom-in shown in \cref{fig:Ex2Setup:d}. We set $\bfL = \bfD_{2D}$, where
\begin{align}
        \bfD_{2D} = \begin{bmatrix}
        \bfI \otimes \bfD_{1D} \\
        \bfD_{1D} \otimes \bfI
    \end{bmatrix}\nonumber
\end{align}
and $\bfD_{1D} \in \mathbb{R}^{N \times N}$ is the discretization of the first derivative with periodic boundary conditions:
\begin{align}
    \bfD_{1D} = \begin{bmatrix}
        -1 & 1 & 0 &0&\cdots &0\\
        0 & -1 & 1 & 0 & \cdots& 0 \\
        \vdots & & \ddots & \ddots && \vdots \\
        0 & \cdots & 0 & -1 & 1 & 0 \\
        0 &0& \cdots & 0& -1 & 1 \\
        1 & 0& \cdots & 0 &0 & -1
    \end{bmatrix}. 
\nonumber
\end{align}
To solve this problem, we use the weighted $\bfA$ and $\bfb$ as given in \cref{eq:Reweighting}.  In this case, $p=524,288$ and  $n=262,144$ so $p = 2n > n$. 
Instead of the GSVD, for this problem we use the discrete Fourier transform \cite{hansen2006deblurring} for which we have the mutual decomposition
\begin{align*}
    \bfA &= \boldsymbol{\mathcal{F}}^* \boldsymbol\Lambda_{\bfA} \boldsymbol{\mathcal{F}} \\
    \bfL &= \begin{bmatrix}
        \bfI_N \otimes \bfD_{1D} \\
        \bfD_{1D} \otimes \bfI_N
    \end{bmatrix} = \begin{bmatrix}
        \boldsymbol{\mathcal{F}}^* & {\bfzero} \\
        {\bfzero} & \boldsymbol{\mathcal{F}}^*
    \end{bmatrix} \begin{bmatrix}
        \bfC \\
        \bfD
    \end{bmatrix}
    \boldsymbol{\mathcal{F}}.
\end{align*}
Here $\boldsymbol{\mathcal{F}} \in \mathbb{C}^{n \times n}$ is the 2D discrete Fourier transform matrix, and $\boldsymbol\Lambda_{\bfA}$ is a diagonal matrix with the eigenvalues of $\bfA$. The matrices $\bfC = \diag(c_1,\dots, c_n)$ and $\bfD = \diag(d_1,\dots,d_n)$ are diagonal and defined by $\bfC = \bfI_N \otimes \boldsymbol\Lambda_\bfL$ and $\bfD = \boldsymbol\Lambda_\bfL \otimes \bfI_N$, where
\begin{align*}
    \boldsymbol{\Lambda}_\bfL = \diag(\lambda_1,\dots, \lambda_N), \text{ with } \lambda_j =  \exp{\left(2j\pi \hat{i}/N\right)}-1, \quad j=1,\dots,N.
\end{align*}
The matrix $\bfL^\dagger_{\bfA} = \bfL^\dagger$ is computed as
\begin{align*}
    \bfL_{\bfA}^\dagger = \bfL^\dagger =
        \boldsymbol{\mathcal{F}}^*
    \begin{bmatrix}
    \tilde{\bfC} & \tilde{\bfD}
    \end{bmatrix}
    \begin{bmatrix}
    \boldsymbol{\mathcal{F}} & {\bfzero} \\
        {\bfzero} & \boldsymbol{\mathcal{F}}
        \end{bmatrix},
\end{align*}
where $\tilde{\bfC} = \diag{(\tilde{c}_1,\dots,\tilde{c}_n)}$ and $\tilde{\bfD} = \diag{(\tilde{d}_1,\dots,\tilde{d}_n)}$ are diagonal matrices, with
\begin{align*}
\tilde{c}_i = \begin{cases} \frac{\overline{c_i}}{\left|c_i\right|^2+\left|d_i\right|^2}, &\text{ if} \left|c_i\right|^2+\left|d_i\right|^2 \neq 0 \\ 0, &\text{ if} \left|c_i\right|^2+\left|d_i\right|^2=0 \end{cases}, \text{ and }  \tilde{d}_i = \begin{cases} \frac{\overline{d_i}}{\left|c_i\right|^2+\left|d_i\right|^2}, &\text{ if} \left|c_i\right|^2+\left|d_i\right|^2 \neq 0 \\ 0, &\text{ if} \left|c_i\right|^2+\left|d_i\right|^2=0 \end{cases}.
\end{align*}
In this case,
\begin{align*}
    \bfL\bfL_{\bfA}^\dagger = \begin{bmatrix}
        \boldsymbol{\mathcal{F}}^* & {\bfzero} \\
        {\bfzero} & \boldsymbol{\mathcal{F}}^*
    \end{bmatrix} \begin{bmatrix}
        \bfC \\ \bfD
    \end{bmatrix}
    \begin{bmatrix}
    \tilde{\bfC} & \tilde{\bfD}
    \end{bmatrix}
    \begin{bmatrix}
    \boldsymbol{\mathcal{F}} & {\bfzero} \\
        {\bfzero} & \boldsymbol{\mathcal{F}}
        \end{bmatrix}.
\end{align*}
This matrix is typically not the identity and has four diagonal submatrices of size $n \times n$.

As before, the selection methods used for the 1D case as well as the 2D specific RWP are compared with the \textit{optimal} fixed $\lambda$. 
A confidence level of $\alpha = 0.999$ is used in the $\chi^2$ dof tests, which corresponds to a bound of $0.941$ on $F(\lambda)$ for the central test.  The bound on $F_C(\lambda)$ for the non-central test is larger, ranging from $0.941$ to $1.485$ in this example.

\subsubsection{Split Bregman}
For SB, we set $\tau=0.01$ as this leads to $\text{RC}(\bfx^{(k)})$ decreasing smoothly. 
Other values of $\tau$ were tested, but there was not a significant difference in the results as long as $\tau$ is not extremely large or small. The convergence results given in \cref{fig:SB2D1} and solutions given in \cref{fig:SB2Res}, demonstrate that GCV and the central $\chi^2$ test perform best. Qualitatively, it is hard to distinguish between the solutions obtained using automatic determination of $\lambda$ and the \textit{optimal} solution that is found by searching over a large range of $\lambda$ assuming the known solution. The DP and RWP solutions, however, are not as clear near the handle and the stand.
The $\lambda$ selected by the non-central $\chi^2$ dof test oscillates initially, but the $\text{RE}$ is comparable to the other methods after the third iteration. In terms of RE and ISNR, DP and RWP do not perform as well as the other selection methods.

\begin{figure}
    \centering
    \subfigure[  $\bfx_{true}$\label{fig:Ex2Setup:a}]{
\includegraphics[width=0.18\textwidth]{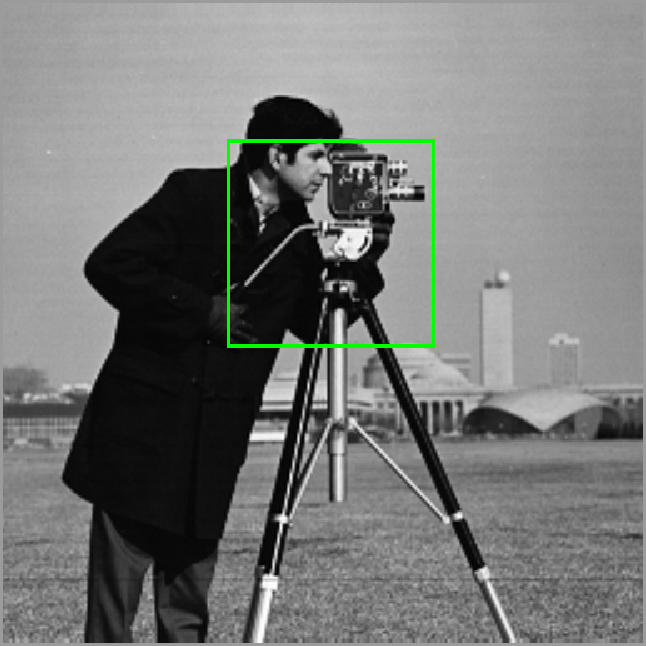}}
    \subfigure[ $\bfx_{true}$ zoom-in \label{fig:Ex2Setup:xt}]{
\includegraphics[width=0.18\textwidth]{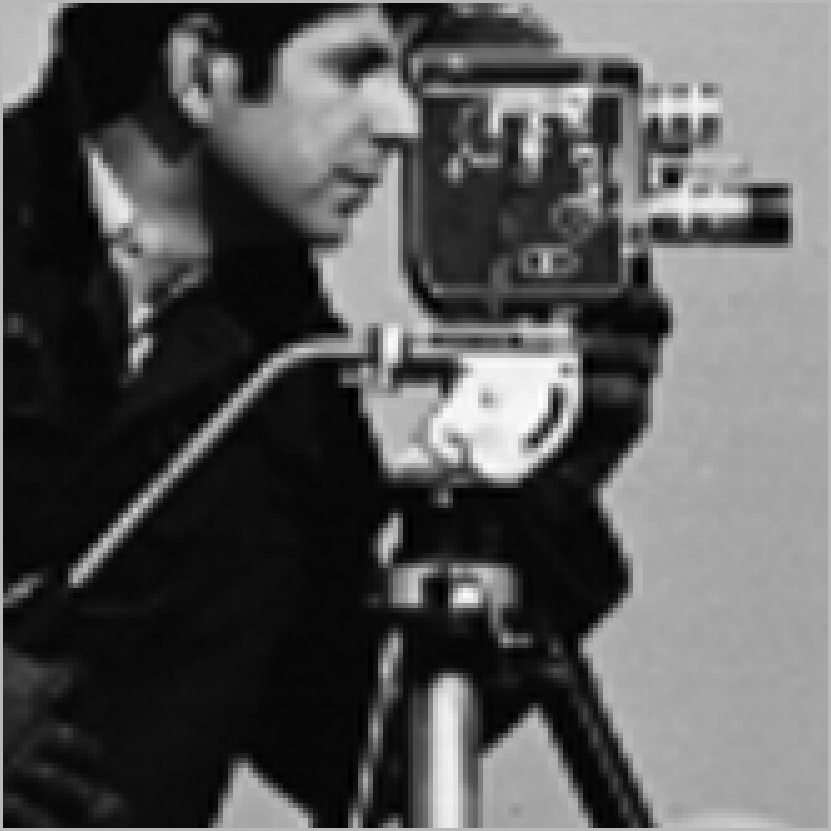}}
    \subfigure[$\tilde{\bfb}$ \label{fig:Ex2Setup:b}]{
\includegraphics[width=0.18\textwidth]{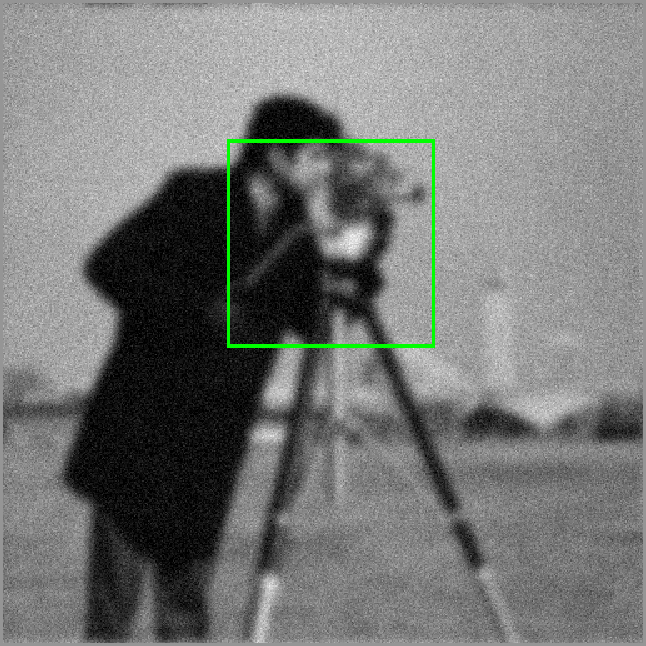}}
    \subfigure[$\tilde{\bfb}$ zoom-in \label{fig:Ex2Setup:d}]{
\includegraphics[width=0.18\textwidth]{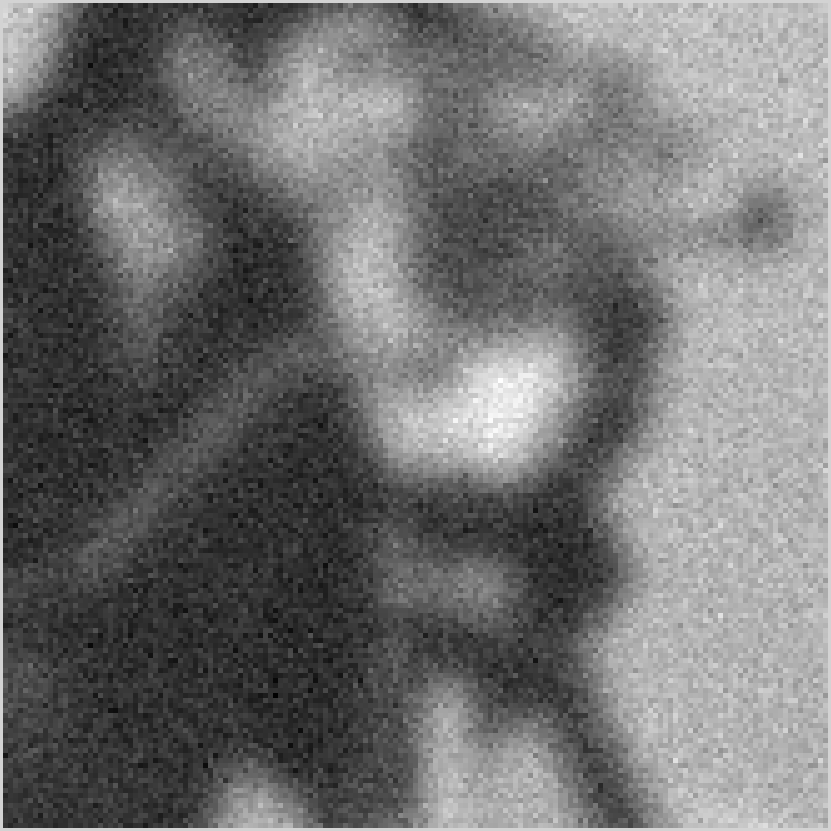}}
    \caption{The true image $\bfx_{true}$ and the blurred and noisy image $\tilde{\bfb}$ for the image deblurring example for image of size $512 \times 512$.  Zoomed-in sections of both images are also shown.}
    \label{fig:Ex2Setup}
\end{figure}

\begin{figure}
    \centering
\subfigure[Relative error \label{fig:SB2D1:a}]{
\includegraphics[scale=.28]{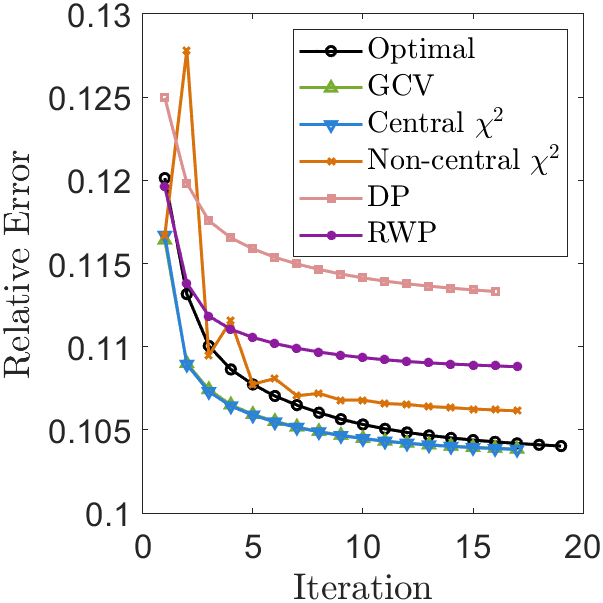}}
\subfigure[Relative change  in $\bfx$\label{fig:SB2D1:b}]{
\includegraphics[scale=.28]{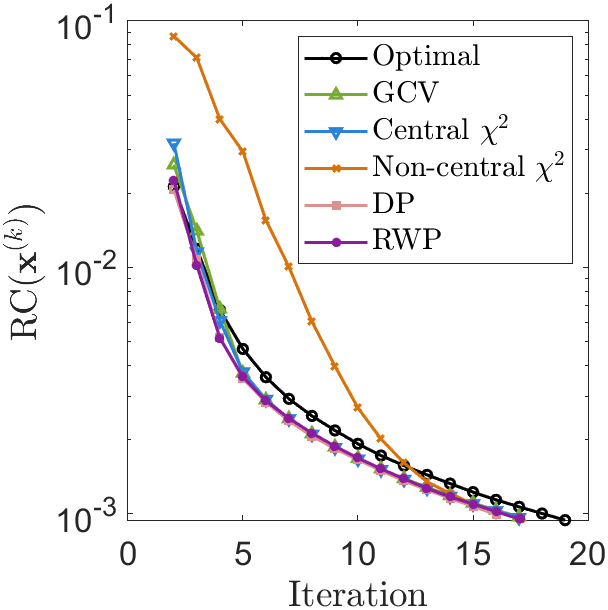}}
\subfigure[ Selection of $\lambda$ \label{fig:SB2D1:c}]{
\includegraphics[scale=.28]{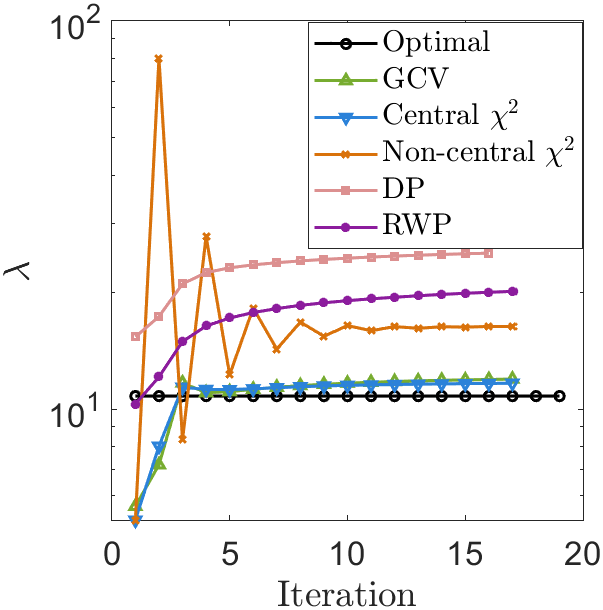}}
\subfigure[Relative change  in $\lambda^2$ \label{fig:SB2D1:d}]{
\includegraphics[scale=.28]{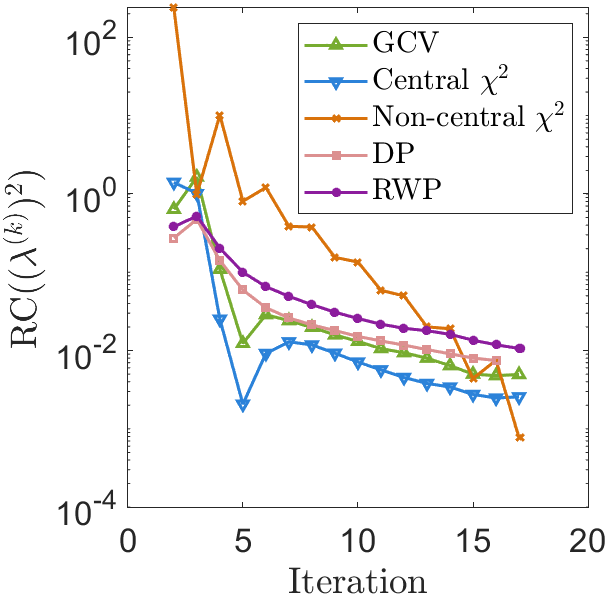}}
   \subfigure[ISNR \label{fig:SB2D1:e}]{
\includegraphics[scale=.28]{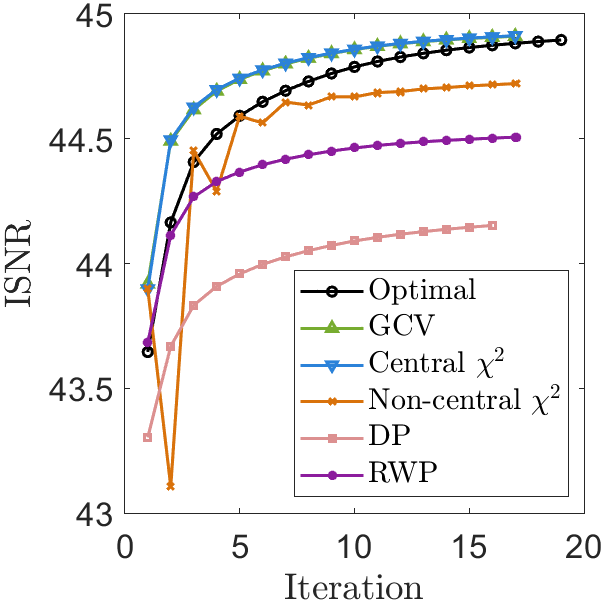}}
    \caption{Results for SB applied to \cref{fig:Ex2Setup:b} where $\lambda$ is fixed at the optimal $\lambda = 10.8$, or selected at each iteration with GCV, the $\chi^2$ dof tests, or DP.
    \Cref{fig:SB2D1:a} plots the $\text{RE}$ by iteration, \cref{fig:SB2D1:b} plots the relative change in $\bfx$, \cref{fig:SB2D1:c} plots the $\lambda$ selected, \cref{fig:SB2D1:d} plots the relative change in $\lambda^2$, and \cref{fig:SB2D1:e} plots the ISNR.}
    \label{fig:SB2D1}
\end{figure}

\begin{figure}
    \centering
        \subfigure[ $\bfx_{true}$ zoom-in \label{fig:Ex2Setupa:xt}]{
\includegraphics[width=0.18\textwidth]{Images/Cam512/PS22SB_XTinsert.png}}
\subfigure[Optimal $\lambda $ ($\lambda = 10.8$) 
\label{fig:SB2Res:a}]{
\includegraphics[width=0.18\textwidth]{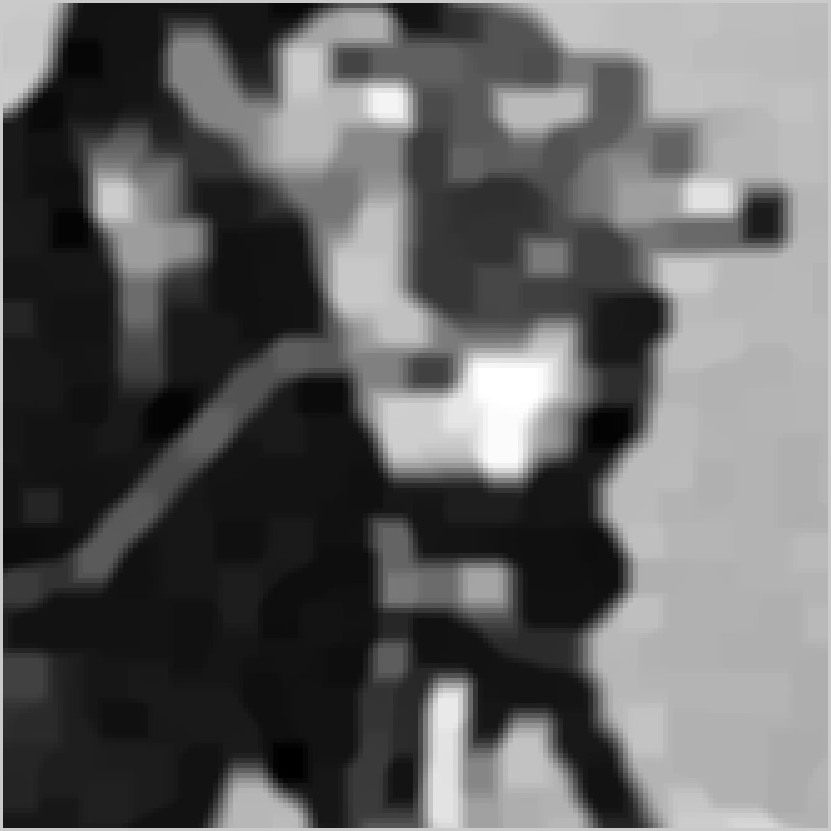}}

    \subfigure[ $\lambda$ selected by GCV
    \label{fig:SB2Res:b}]{
\includegraphics[width=0.18\textwidth]{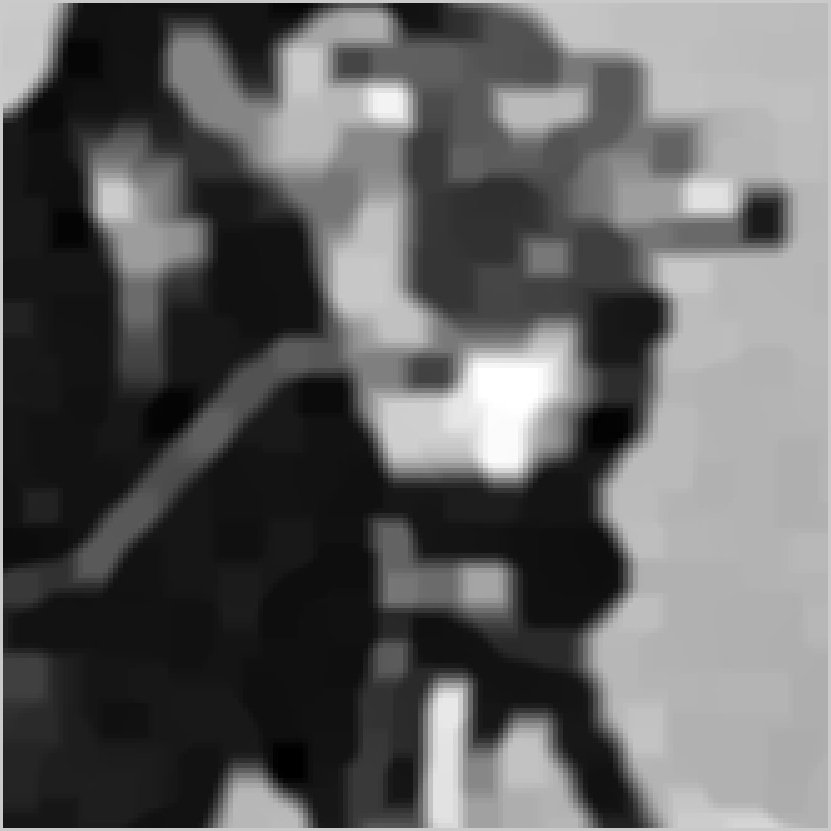}}
\subfigure[$\lambda$ selected by central $\chi^2$
\label{fig:SB2Res:c}]{
\includegraphics[width=0.18\textwidth]{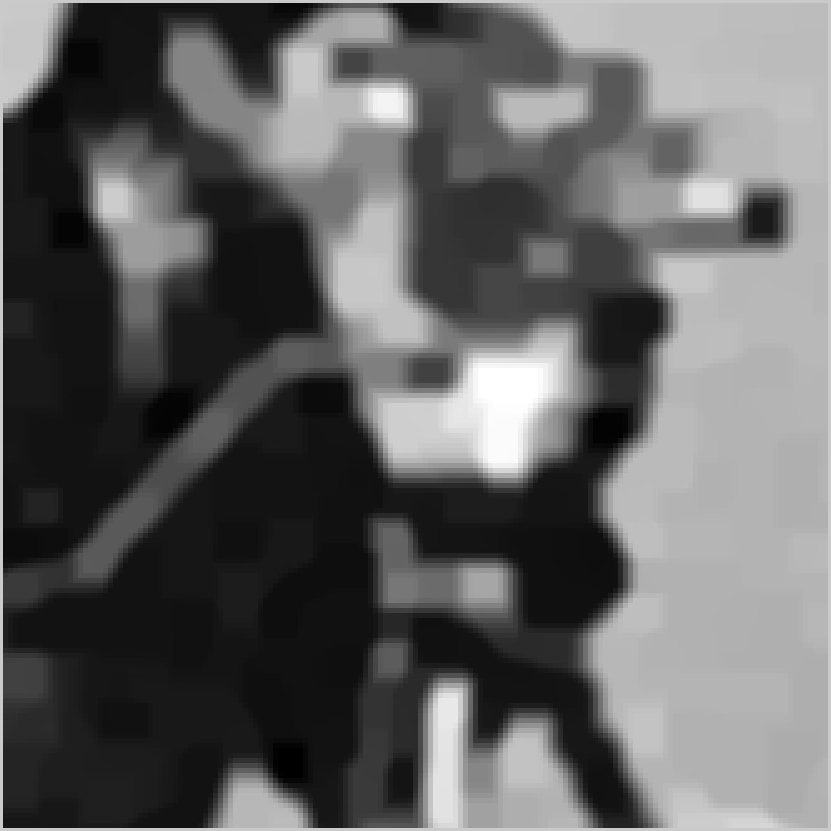}}
\subfigure[$\lambda$ selected by non-central $\chi^2$
\label{fig:SB2Res:d}]{
\includegraphics[width=0.18\textwidth]{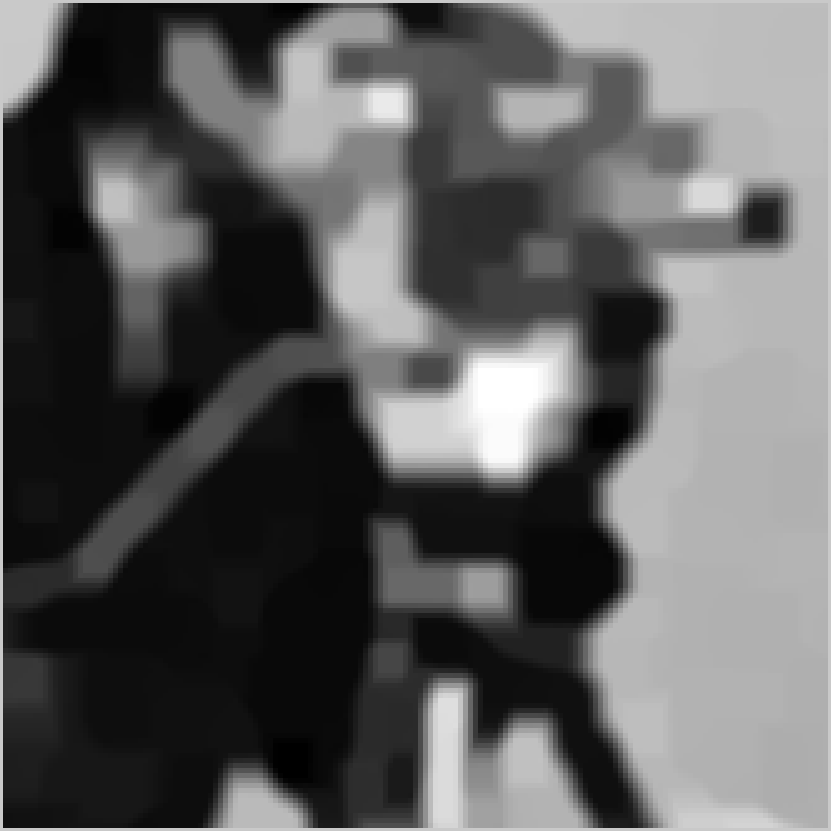}}
\subfigure[$\lambda$ selected by DP
\label{fig:SB2Res:e}]{
\includegraphics[width=0.18\textwidth]{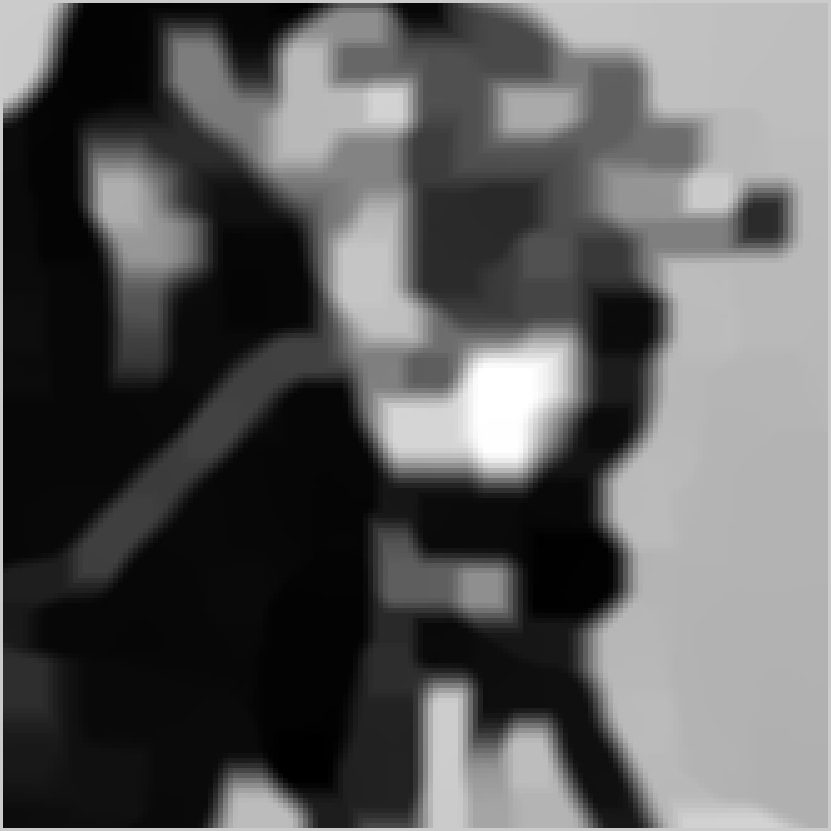}}
\subfigure[$\lambda$ selected by RWP
\label{fig:SB2Res:r}]{
\includegraphics[width=0.18\textwidth]{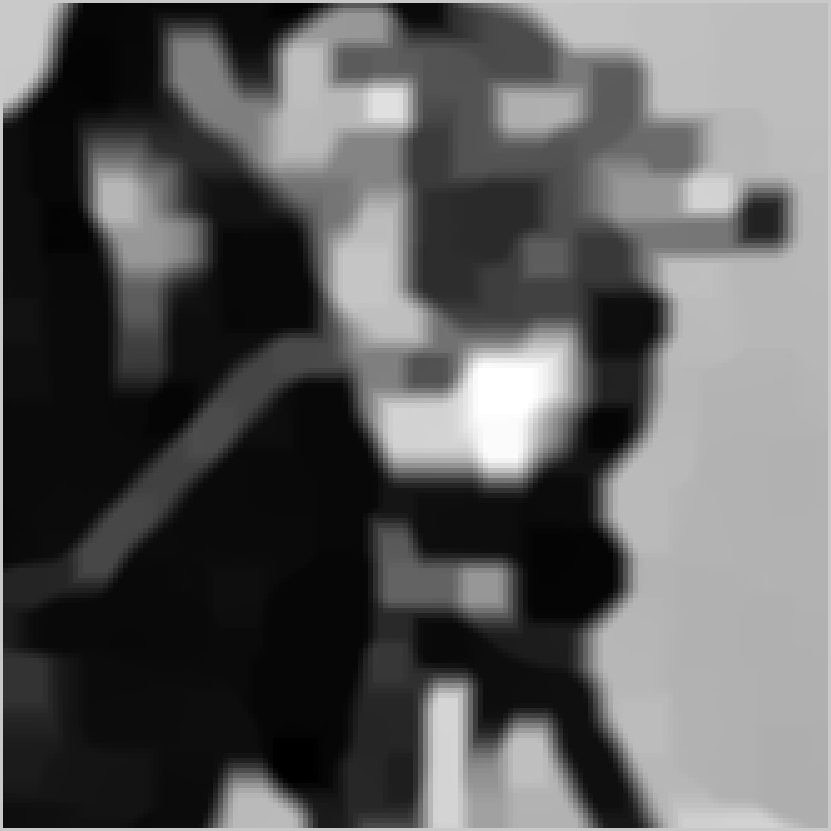}}
\subfigure[$\lambda$ selected by GCV, $\text{TOL}_\lambda=0.01$ \label{fig:SB2Res:f}]{
\includegraphics[width=0.18\textwidth]{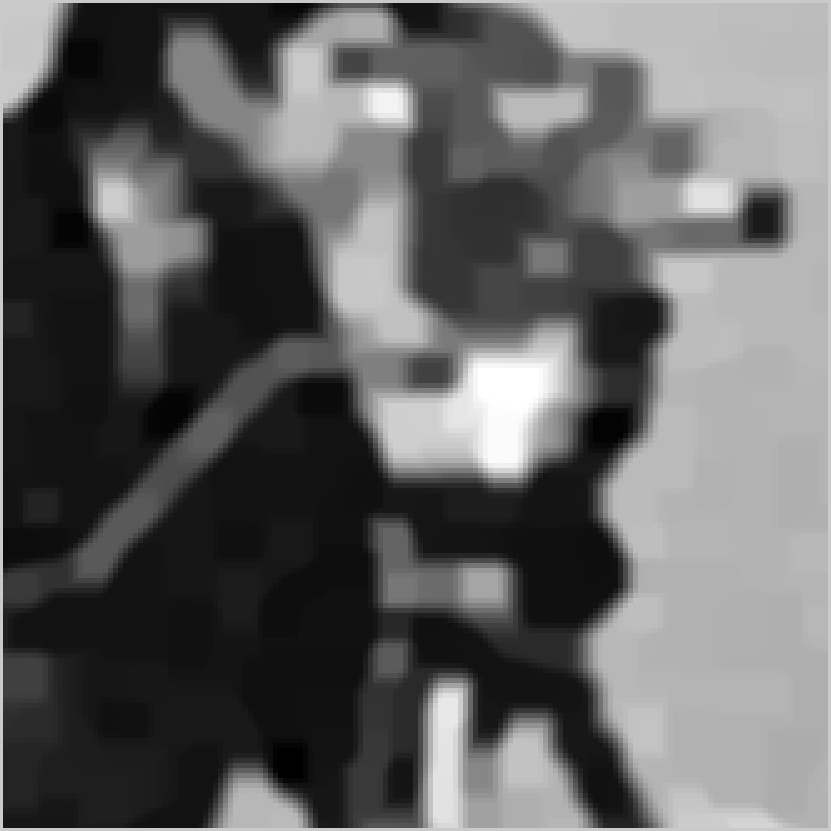}}
\subfigure[ $\lambda$ selected by central $\chi^2$, $\text{TOL}_\lambda=0.01$
    \label{fig:SB2Res:g}]{
\includegraphics[width=0.18\textwidth]{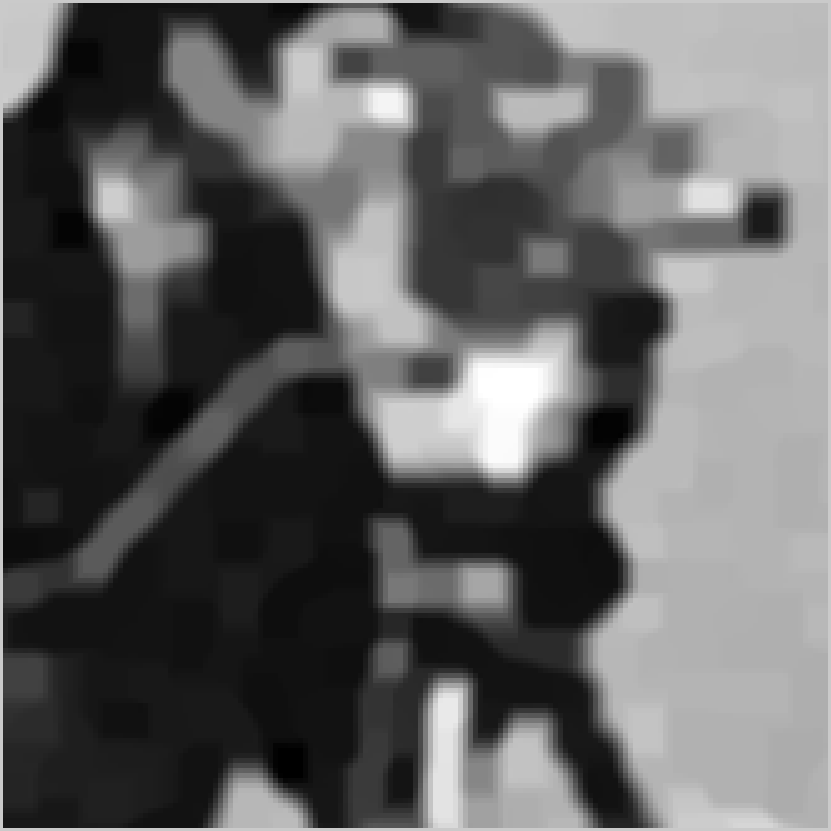}}
\subfigure[$\lambda$ selected by non-central $\chi^2$, $\text{TOL}_\lambda=0.01$ \label{fig:SB2Res:h}]{
\includegraphics[width=0.18\textwidth]{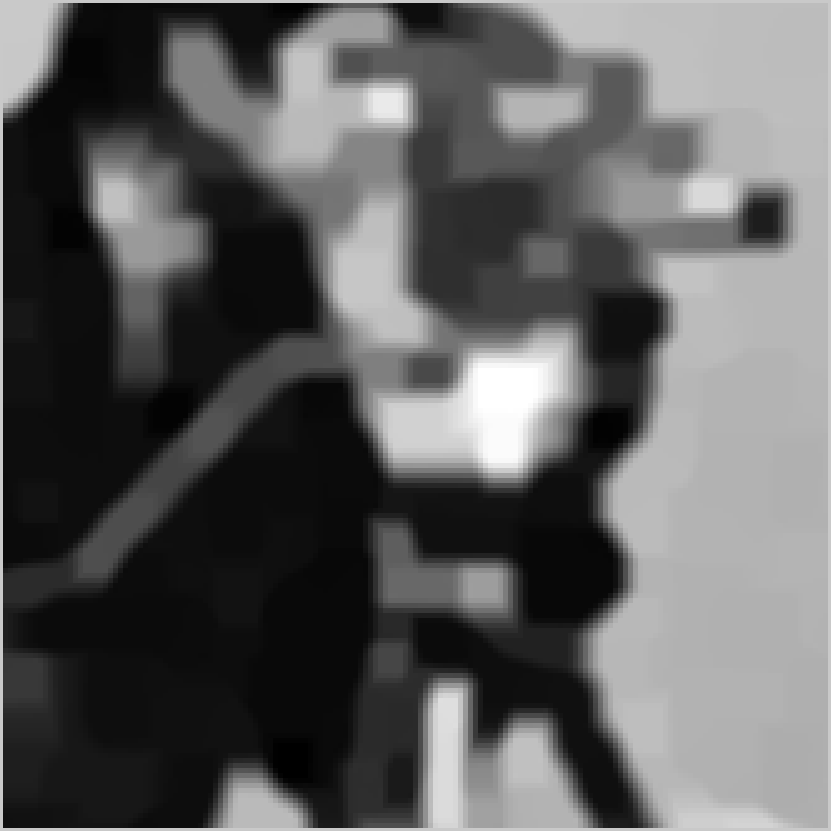}}
\subfigure[$\lambda$ selected by DP, $\text{TOL}_\lambda=0.01$
\label{fig:SB2Res:i}]{
\includegraphics[width=0.18\textwidth]{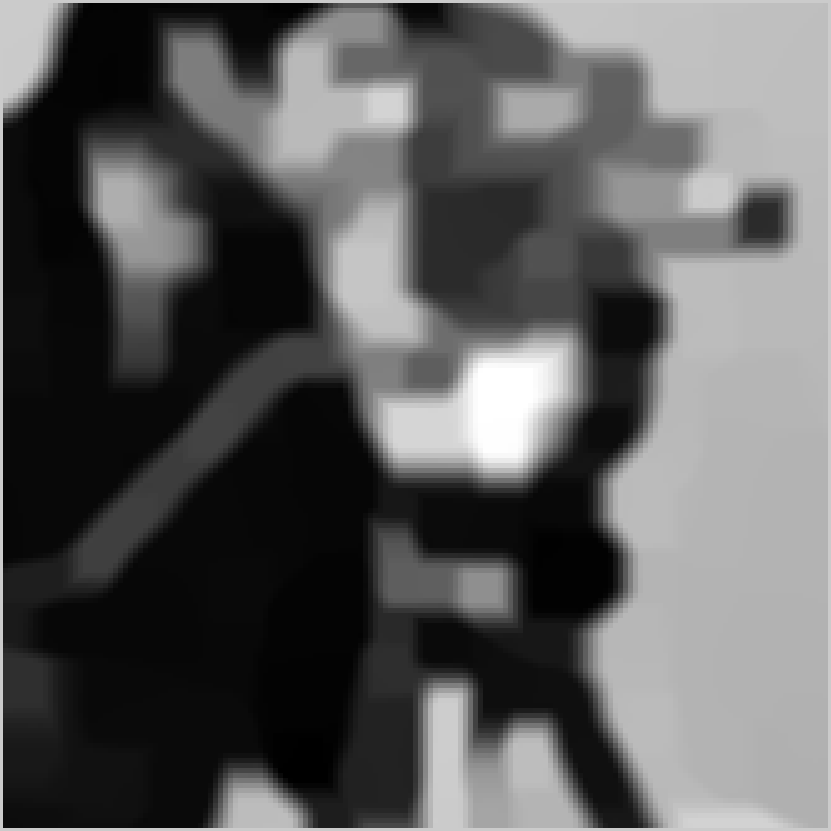}}
\subfigure[$\lambda$ selected by RWP, $\text{TOL}_\lambda=0.01$
\label{fig:SB2Res:rl}]{
\includegraphics[width=0.18\textwidth]{Images/Cam512/PS22SB_DPl.png}}
    \caption{Zoomed-in windows of the SB solutions at convergence for \cref{fig:Ex2Setup:b}. We note that the full images are indistinguishable and are thus not shown.}
    \label{fig:SB2Res}
\end{figure}

\subsubsection{Majorization-Minimization}

\begin{figure}
    \centering
   \subfigure[Relative error \label{fig:MM2D1:a}]{
\includegraphics[scale=.28]{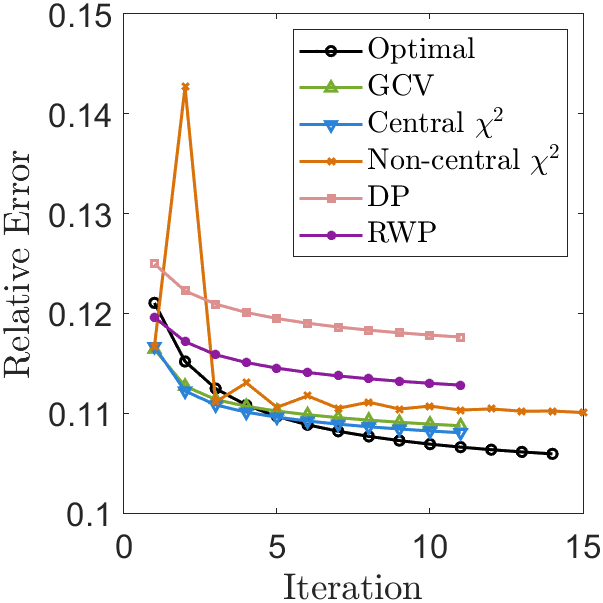}}
\subfigure[Relative change  in $\bfx$\label{fig:MM2D1:b}]{
\includegraphics[scale=.28]{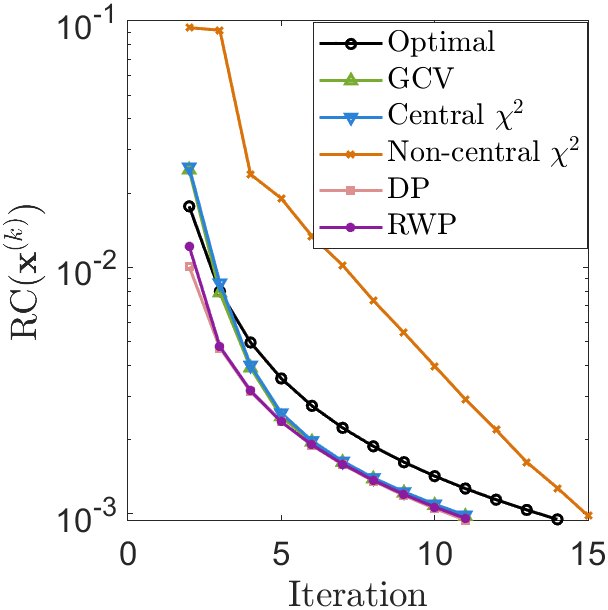}}
\subfigure[ Selection of $\lambda$ \label{fig:MM2D1:c}]{
\includegraphics[scale=.28]{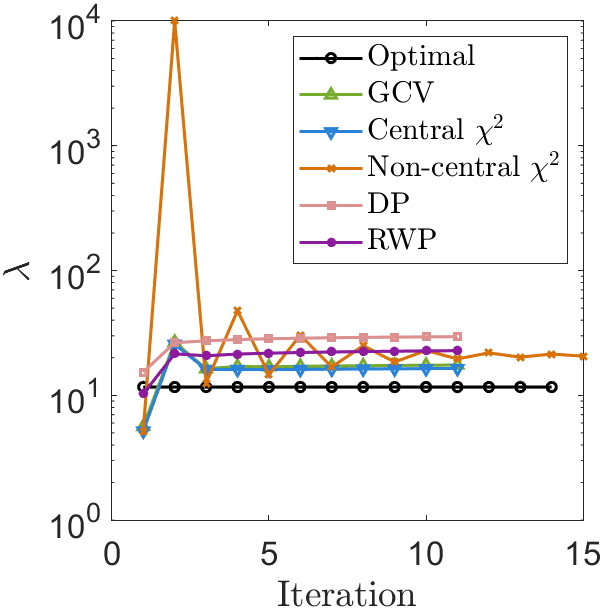}}
\subfigure[Relative change  in $\lambda^2$ \label{fig:MM2D1:d}]{
\includegraphics[scale=.28]{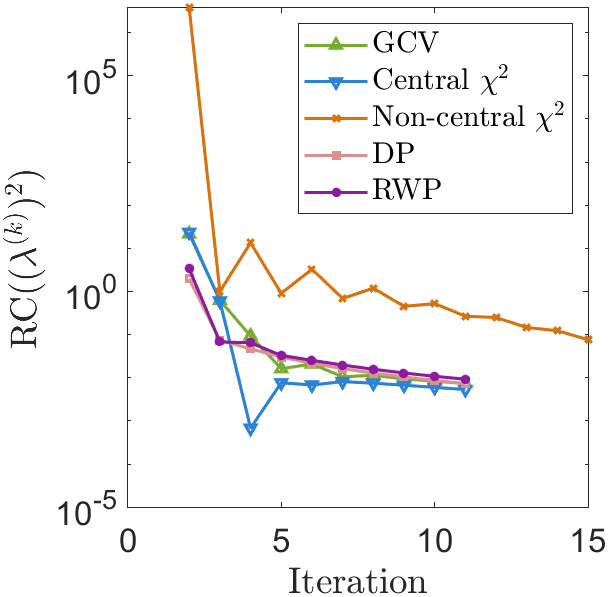}}
   \subfigure[ISNR \label{fig:MM2D1:e}]{
\includegraphics[scale=.28]{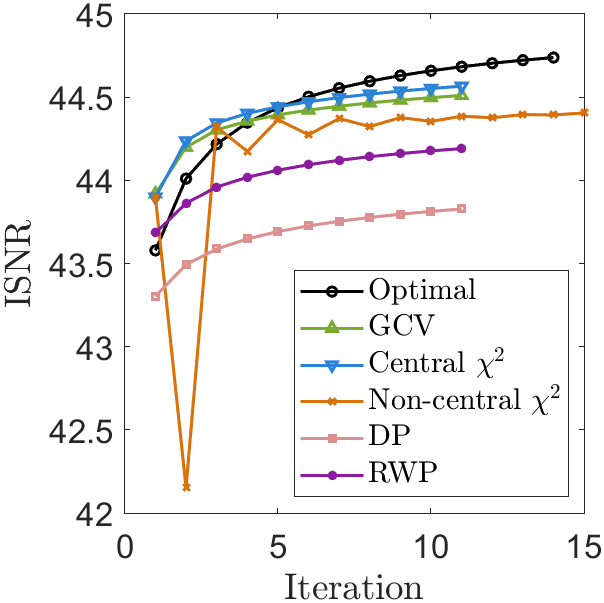}}
    \caption{
    Results for MM applied to \cref{fig:Ex2Setup:b} where $\lambda$ is fixed at the optimal $\lambda = 11.7$, or selected at each iteration with GCV, the $\chi^2$ dof tests, or DP.
    \Cref{fig:MM2D1:a} plots the $\text{RE}$ by iteration, \cref{fig:MM2D1:b} plots the relative change in $\bfx$, \cref{fig:MM2D1:c} plots the $\lambda$ selected, \cref{fig:MM2D1:d} plots the relative change in $\lambda^2$, and \cref{fig:MM2D1:e} plots the ISNR.}
    \label{fig:MM2D1}
\end{figure}

\begin{figure}
    \centering
            \subfigure[ $\bfx_{true}$ zoom-in \label{fig:Ex2Setupb:xt}]{
\includegraphics[width=0.18\textwidth]{Images/Cam512/PS22SB_XTinsert.png}}
\subfigure[Optimal $\lambda $ ($\lambda = 11.7$) 
\label{fig:MM2Res:a}]{
\includegraphics[width=0.18\textwidth]{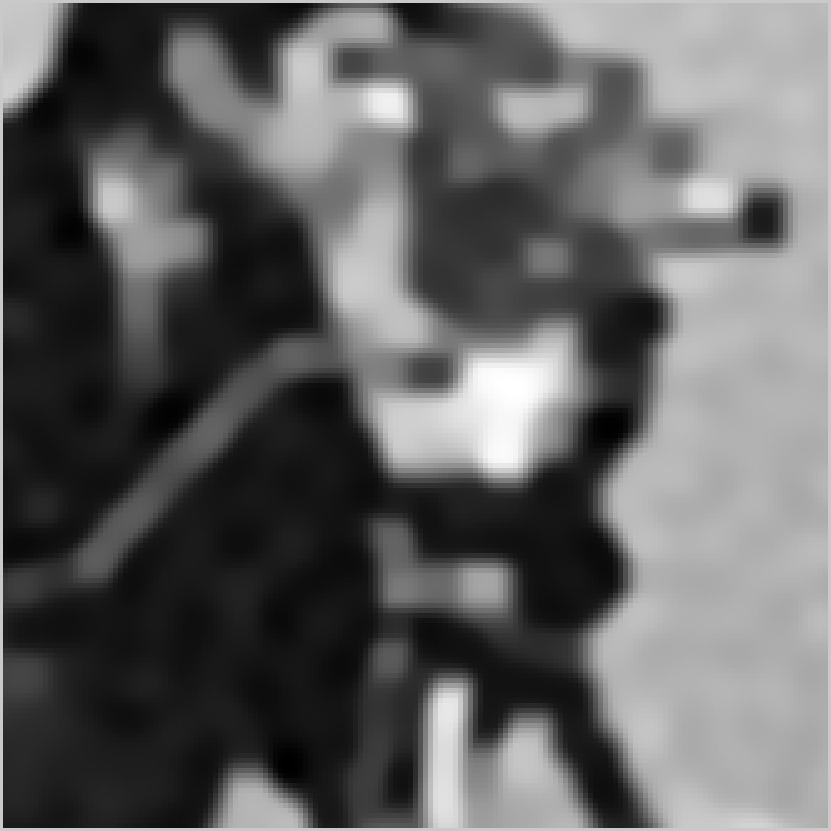}}

    \subfigure[ $\lambda$ selected by GCV
    \label{fig:MM2Res:b}]{
\includegraphics[width=0.18\textwidth]{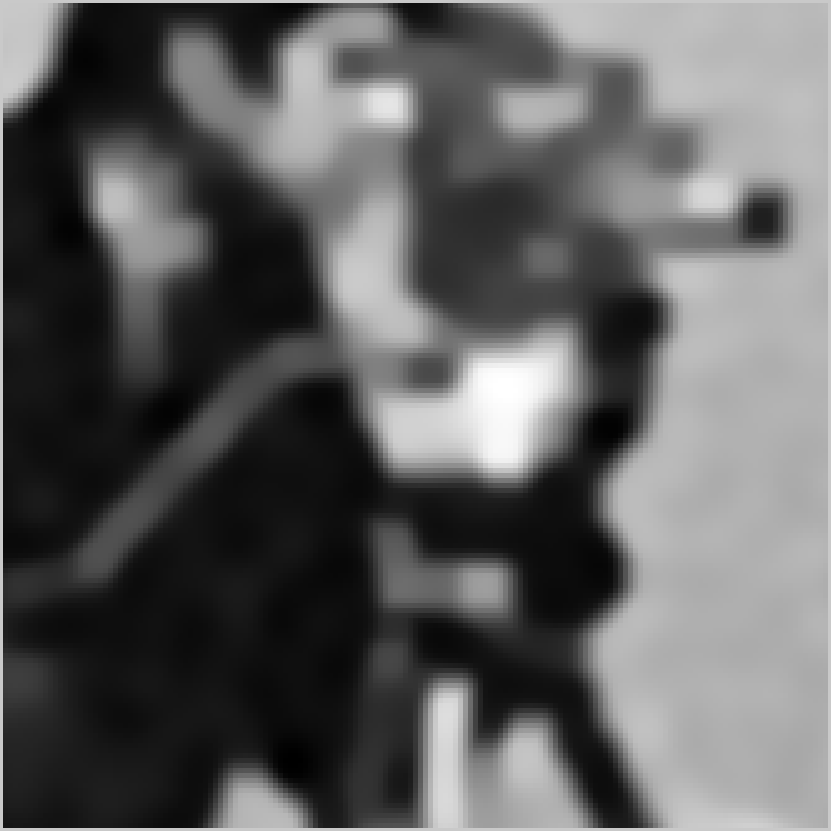}}
\subfigure[$\lambda$ selected by central $\chi^2$
\label{fig:MM2Res:c}]{
\includegraphics[width=0.18\textwidth]{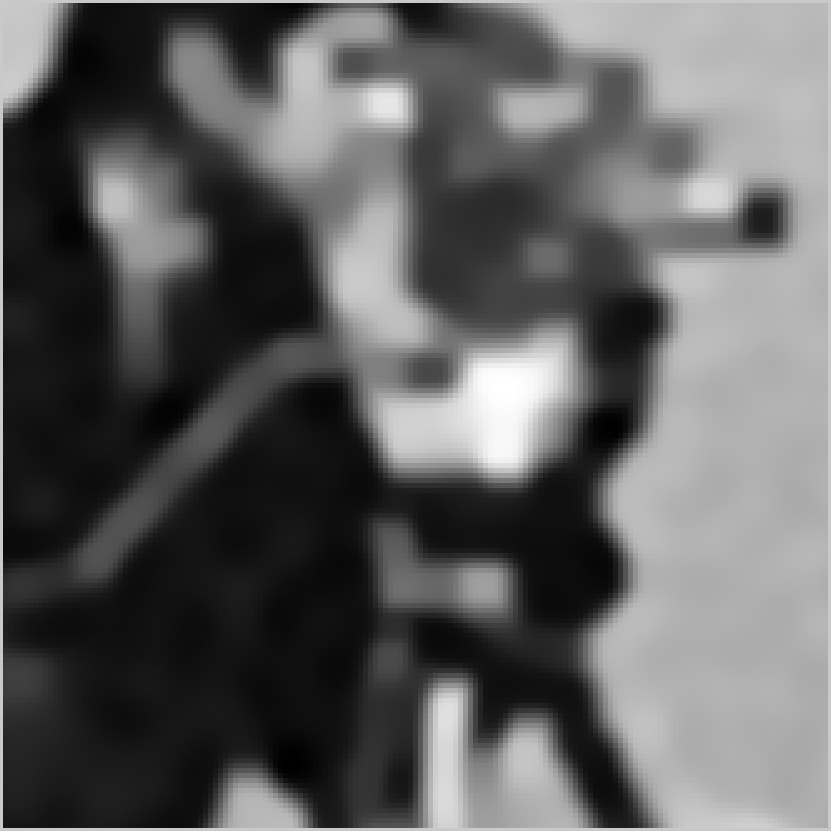}}
\subfigure[$\lambda$ selected by non-central $\chi^2$
\label{fig:MM2Res:d}]{
\includegraphics[width=0.18\textwidth]{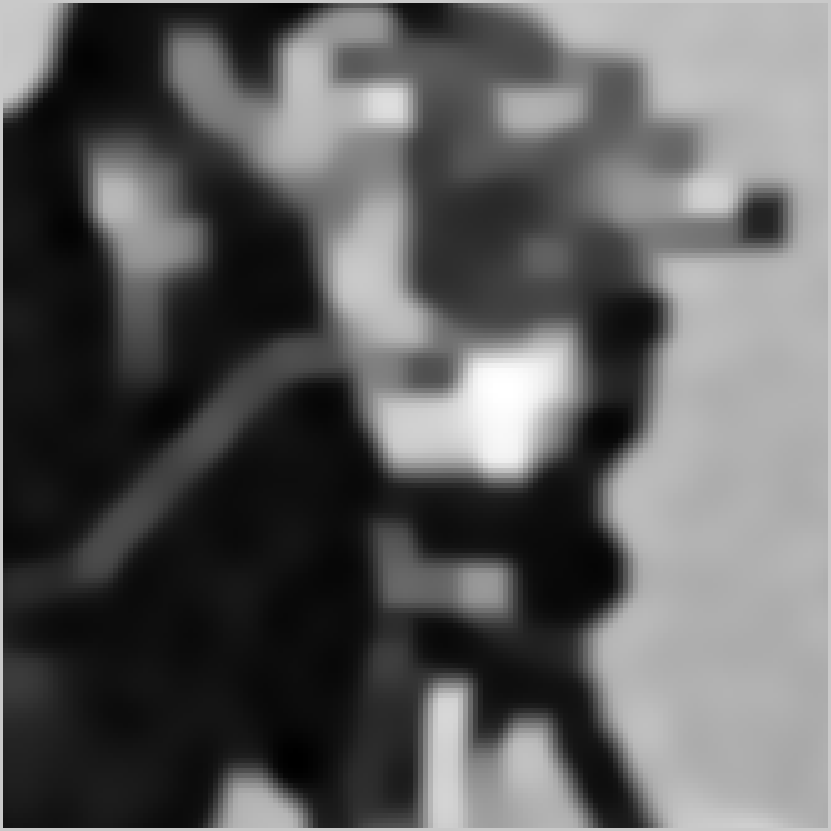}}
\subfigure[$\lambda$ selected by DP
\label{fig:MM2Res:e}]{
\includegraphics[width=0.18\textwidth]{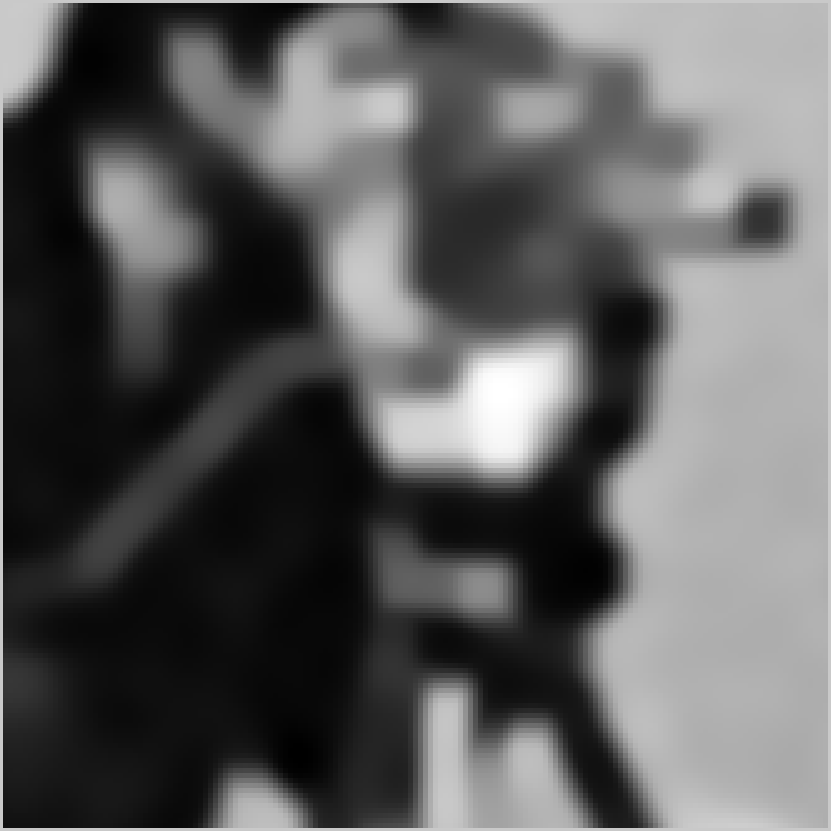}}
\subfigure[$\lambda$ selected by RWP
\label{fig:MM2Res:r}]{
\includegraphics[width=0.18\textwidth]{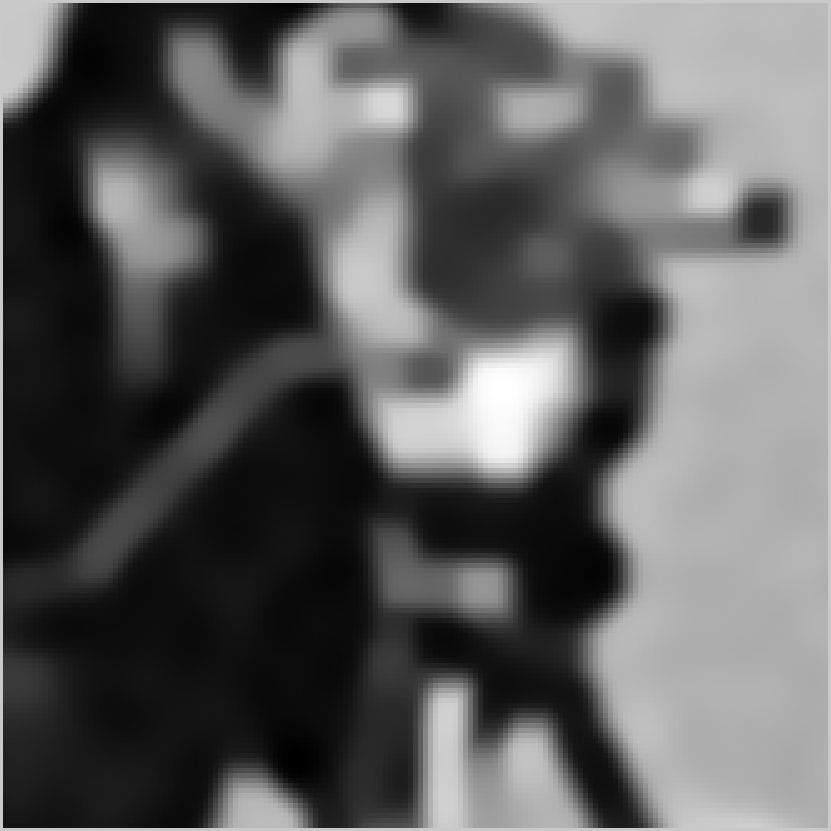}}
\subfigure[$\lambda$ selected by GCV, $\text{TOL}_\lambda=0.01$ \label{fig:MM2Res:f}]{
\includegraphics[width=0.18\textwidth]{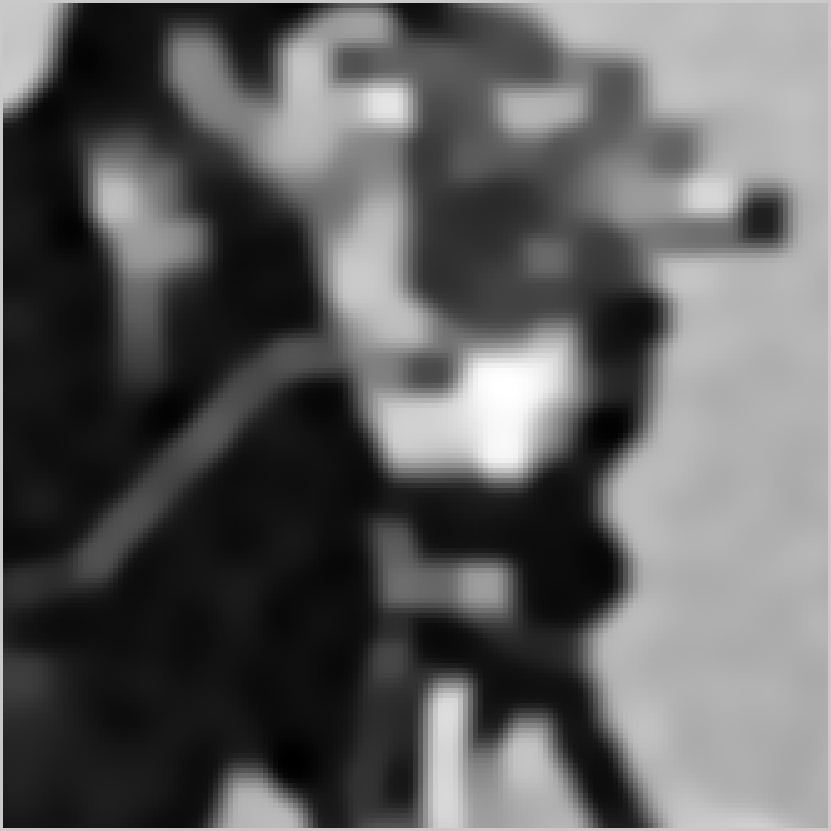}}
    \subfigure[ $\lambda$ selected by central $\chi^2$, $\text{TOL}_\lambda=0.01$
    \label{fig:MM2Res:g}]{
\includegraphics[width=0.18\textwidth]{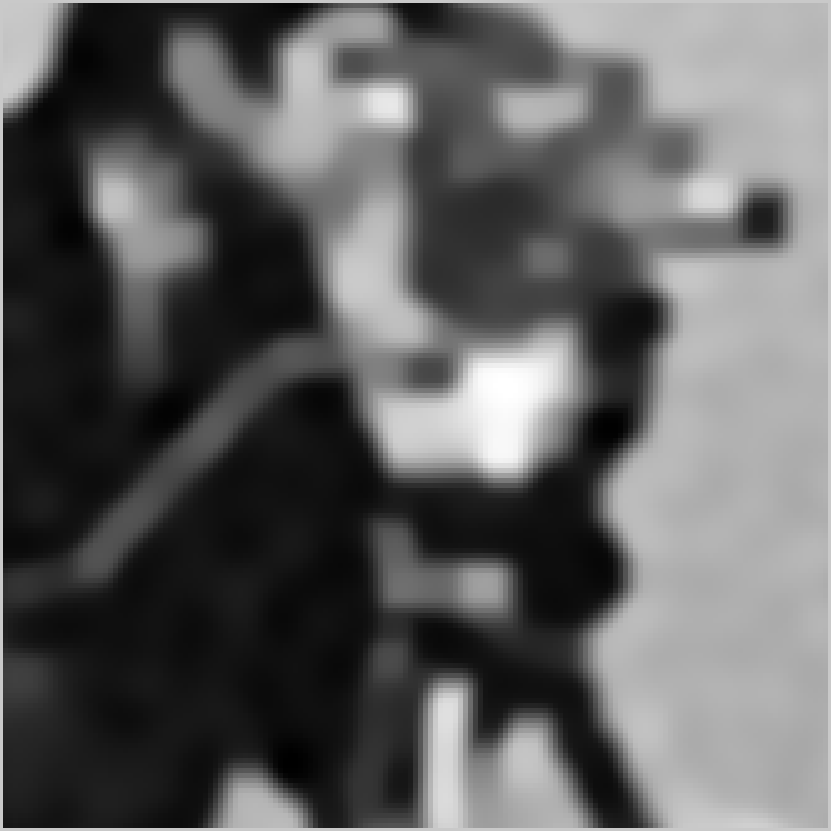}}
\subfigure[$\lambda$ selected by non-central $\chi^2$, $\text{TOL}_\lambda=0.01$ \label{fig:MM2Res:h}]{
\includegraphics[width=0.18\textwidth]{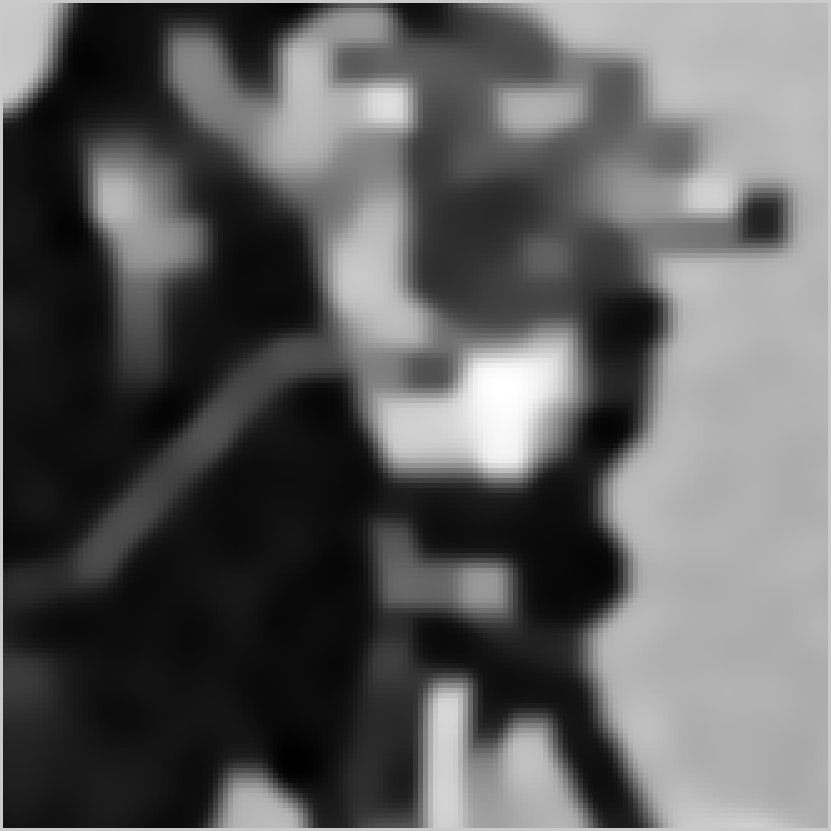}}
\subfigure[$\lambda$ selected by DP, $\text{TOL}_\lambda=0.01$
\label{fig:MM2Res:i}]{
\includegraphics[width=0.18\textwidth]{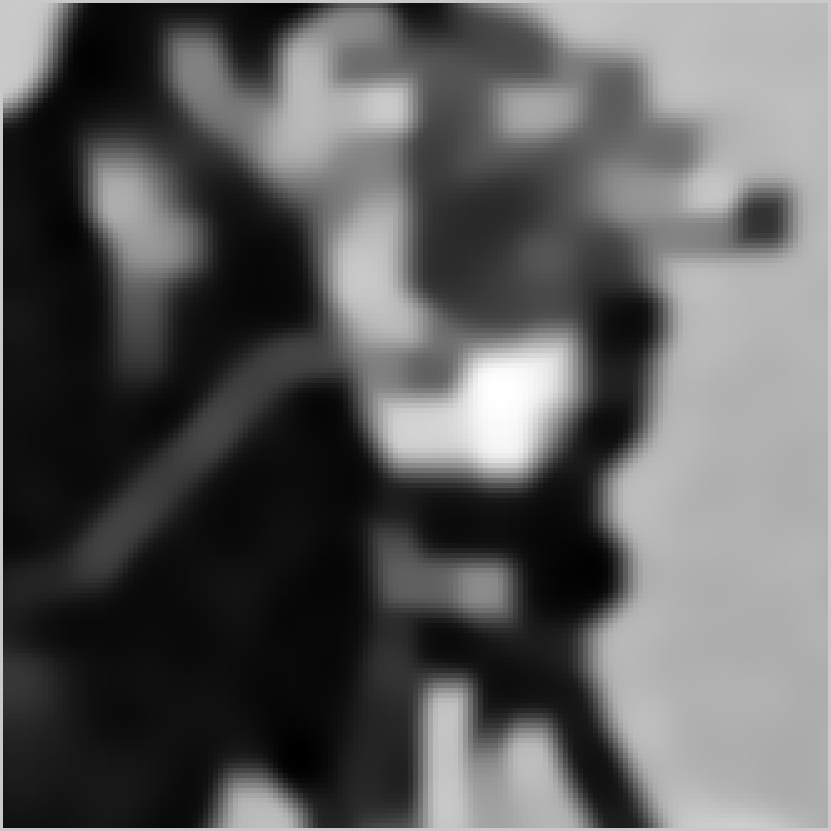}}
\subfigure[$\lambda$ selected by RWP, $\text{TOL}_\lambda=0.01$
\label{fig:MM2Res:rl}]{
\includegraphics[width=0.18\textwidth]{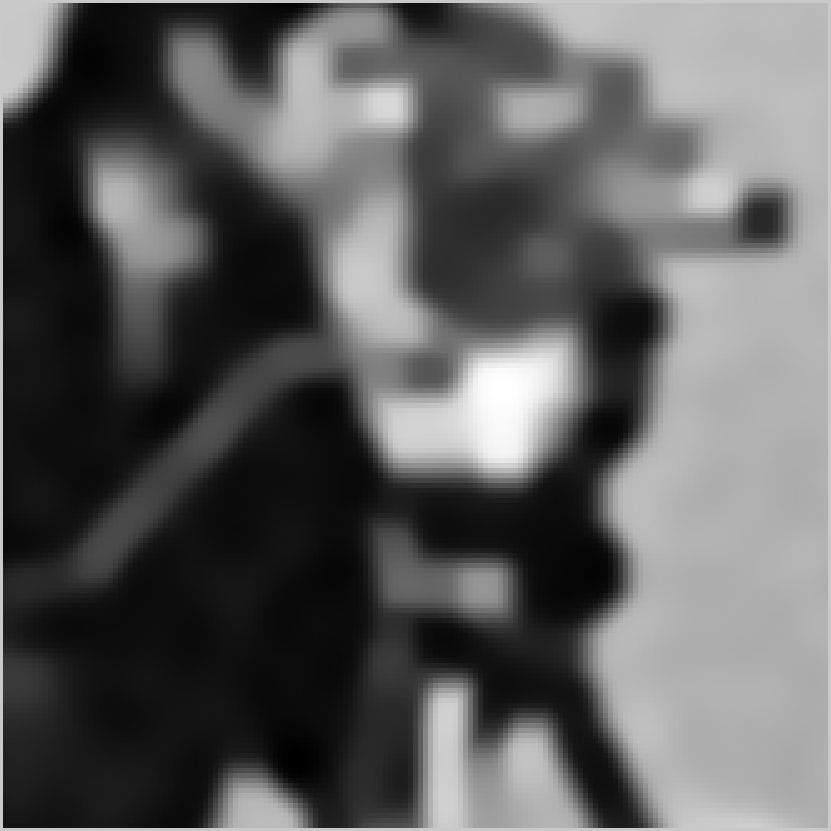}}
    \caption{Zoomed-in windows of the MM solutions at convergence for \cref{fig:Ex2Setup:b}. We note that the full images are indistinguishable and are thus not shown.}
    \label{fig:MM2Res}
\end{figure}

We set $\varepsilon =0.01$ as this is small relative to the magnitude of $\bfx$ \cite{buccini2021choice,buccini2019l}.  The results in \cref{fig:MM2D1} and the reconstructions in \cref{fig:MM2Res} show that for this $\varepsilon$, both GCV and the central $\chi^2$ dof test selection methods are comparable to fixing $\lambda$ at the \textit{optimal}, $\lambda = 11.7$. Again, the results are almost indistinguishable on the zoomed region with slightly greater smoothing evident for the RWP and DP methods.
The non-central $\chi^2$ dof test does not perform as well in this case, with the values of $\lambda$ oscillating widely in \cref{fig:MM2D1:c}.  DP and RWP again perform worse than GCV and the $\chi^2$ tests, with the solutions not being as clear near the handle and the camera stand.

\begin{table}[htp]
\caption{$\text{RE}$, $\text{ISNR}$, iterations, and computation time (in seconds) for the solutions to the 2D example in \cref{fig:Ex2Setup:b}. The best results are shown in boldface, excluding the methods where $\lambda$ is fixed at the \textit{optimal}.}
\centering
\begin{tabular}{lcccrcccr}
\hline
                         & \multicolumn{4}{c}{No $\text{TOL}_\lambda$}                                            & \multicolumn{4}{c}{$\text{TOL}_\lambda=0.01$}                                          \\ 
                         \cmidrule(lr){2-5} 
                         \cmidrule(lr){6-9}
Method                   & RE   & \multicolumn{1}{c}{ISNR} & \multicolumn{1}{c}{Iter.} & \multicolumn{1}{c}{Time} & RE   & \multicolumn{1}{c}{ISNR} & \multicolumn{1}{c}{Iter.} & \multicolumn{1}{c}{Time} \\ \hline
SB, Optimal              & 0.104 & 44.90                    & 19                        & 0.85                       &      &                          &                           &                          \\
SB, GCV                  & \textbf{0.104} & \textbf{44.91}                    & 17                        & 49.05                    & \textbf{0.104}                         &  \textbf{44.91} & 17                         &  40.04                        \\
SB, Central $\chi^2$     & \textbf{0.104} & \textbf{44.91}                    & 17                        & 54.43                    & \textbf{0.104} & \textbf{44.91}                    & 17                        & \textbf{21.91}                    \\
SB, Non-central $\chi^2$ & 0.106 & 44.72                    & 17                        & 135.54                   & 0.106 & 44.72                    & 17                        & 111.37                   \\
SB, DP                   & 0.113 & 44.15                    & 16                        & 78.64                    & 0.113 & 44.16                    & 17                        & 69.53                    \\
SB, RWP                   & 0.109 & 44.51                    & 17                        & 58.99                    & 0.109 & 44.51                    & 17                        & 58.99                    \\
\hline MM, Optimal              & 0.106 & 44.74                    & 14                        & 0.71                       &      &                          &                           &                          \\
MM, GCV                  & 0.109 & 44.51                    & \textbf{11}                        & 31.86                    &  0.109    & 44.51                         & \textbf{11}                          & 30.45                         \\
MM, Central $\chi^2$     & 0.108 & 44.57                    & \textbf{11}                        & 45.42                    & 0.108 & 44.57                    & \textbf{11}                        & 27.14                    \\
MM, Non-central $\chi^2$ & 0.110 & 44.41                    & 15                        & 145.17                   & 0.110 & 44.41                    & 15                        & 145.17                   \\
MM, DP                   & 0.118 & 43.83                    & \textbf{11}                        & 73.58                   & 0.118 & 43.83                    & \textbf{11}                        & 66.52                   \\ 
MM, RWP                   & 0.113 & 44.19                    & \textbf{11}                        & 38.11                   & 0.113 & 44.19                    & \textbf{11}                        & 38.11                   \\ 
\bottomrule
\end{tabular}
\label{tab:Ex2}
\end{table}

\subsubsection{Discussion on the two-dimensional results}\label{sec:twodres}
The results using the SB and MM iterative methods with the regularization parameter methods are summarized in \cref{tab:Ex2}.  In general, SB is better than MM in terms of RE and ISNR.  MM does take fewer iterations than SB, but the timing is close. We also observe that a tolerance should be set on $\lambda$ as this reduces the computational time while having little impact on the solution. The best method for the 2D problem is SB with central $\chi^2$ as this produces the best $\text{RE}$ and $\text{ISNR}$.  Both GCV and the central $\chi^2$ dof test perform well with both SB and MM, with solutions having $\text{RE}$ and $\text{ISNR}$ closest to the optimal. With $\text{TOL}_\lambda = 0.01$, central $\chi^2$ is faster than GCV. The central $\chi^2$ test, however, does require information on the noise while GCV does not. The non-central $\chi^2$ test and DP take longer to run and perform worse.  With a lower noise level, the $\chi^2$ test does not perform as well, so for a lower noise level, GCV is preferred while for a higher noise level, the central $\chi^2$ is better.

\section{Conclusions} \label{sec:Conclusion}

We have presented methods for selecting the parameters in the inner minimization problems of SB and MM by using GCV or the $\chi^2$ dof test at each iteration, including showing a new approach to provide an estimate of the expected value of $\bfx$ each iteration, and a new theorem on the $\chi^2$ degrees of freedom when $p>n$. For the non-central $\chi^2$ dof test, we proposed using the current solution in the iterative method as the mean of the solution.  Although the parameters selected in this method vary in the early iterations, they still converge once $\bfx^{(k)}$ is closer to convergence. Numerical examples demonstrate that selecting the parameter at each iteration with these methods produces comparable results in terms of the final relative error and the number of iterations to using the \textit{optimal} fixed parameter.  
In addition, these methods do not need to be used at every iteration and can still be helpful for finding a suitable parameter in the initial iterations. They zoom in on the ideal parameter which can then be fixed after the selection method converges. This still performs well and is computationally cheaper than searching for the fixed parameters by running SB or MM to completion multiple times. 

\section*{Acknowledgments}
Funding: This work was partially supported by the National Science Foundation (NSF) under grant 
DMS-1913136, and DMS-2152704 for  Renaut.  Any opinions, findings, conclusions, or recommendations expressed in this material are those of the authors and do not necessarily reflect the views of the National Science Foundation. M.I. Espa\~nol was supported through a Karen Uhlenbeck EDGE Fellowship.

\bibliographystyle{siamplain}

\bibliography{references}

\end{document}